\newtheorem{theorem}{Theorem}
\newtheorem{lemma}[theorem]{Lemma}
\newtheorem{observation}[theorem]{Observation}
\newtheorem{corollary}[theorem]{Corollary}
\newcommand{\FF}{\mathcal{F}}
\begin{document}
\title{Triangle-free planar graphs with small independence number}
\author{%
     Zden\v{e}k Dvo\v{r}\'ak\thanks{Computer Science Institute (CSI) of Charles University,
           Malostransk{\'e} n{\'a}m{\v e}st{\'\i} 25, 118 00 Prague, 
           Czech Republic. E-mail: \protect\href{mailto:rakdver@iuuk.mff.cuni.cz}{\protect\nolinkurl{rakdver@iuuk.mff.cuni.cz}}.
           Supported by project GA14-19503S (Graph coloring and structure) of Czech Science Foundation.}
\and
Jordan Venters\thanks{
University of Warwick, Coventry CV4 7AL, UK.
E-mail: \protect\href{mailto:J.Venters@warwick.ac.uk}{\protect\nolinkurl{J.Venters@warwick.ac.uk}}.
The work of this author has received funding from the European Research Council (ERC) under the European Union's Horizon 2020 research and innovation
programme (grant agreement No 648509). This publication reflects only its authors' view; the European Research Council Executive Agency is
not responsible for any use that may be made of the information it contains.}
}
\date{\today}
\maketitle
\begin{abstract}
Since planar triangle-free graphs are $3$-colourable, such a graph with $n$ vertices has an independent
set of size at least $n/3$.  We prove that unless the graph contains a certain obstruction, its independence
number is at least $n/(3-\varepsilon)$ for some fixed $\varepsilon>0$.  We also provide a reduction rule
for this obstruction, which enables us to transform any plane triangle-free graph $G$ into a plane triangle-free graph $G'$
such that $\alpha(G')-|G'|/3=\alpha(G)-|G|/3$ and  $|G'|\le (\alpha(G)-|G|/3)/\varepsilon$.
We derive a number of algorithmic consequences
as well as a structural description of $n$-vertex plane triangle-free graphs whose independence number is close to $n/3$.
\end{abstract}

What is the smallest independence number a planar graph on $n$ vertices can have?  By Four Colour Theorem,
each such graph is $4$-colourable, and the largest colour class gives an independent set of size at least $n/4$.
On the other hand, there are infinitely many planar graphs for that this bound is tight.  In fact, it is
an intriguing open problem to describe such graphs, and we do not even know any polynomial-time algorithm
to decide whether an $n$-vertex planar graph has an independent set larger than $n/4$.

In this paper, we study an easier related problem regarding independent sets in planar triangle-free graphs.
By Gr\"{o}tzsch' theorem~\cite{grotzsch1959}, these graphs are $3$-colourable, and thus such a graph with $n$
vertices has an independent set of size at least $n/3$.  Unlike the general case, this bound is not
tight---Steinberg and Tovey~\cite{SteinbergTovey1993} proved the lower bound $(n+1)/3$, and gave an infinite
family of graphs for that this bound is tight.  Dvo\v{r}\'ak et al.~\cite{indsettight} improved this bound
to $(n+2)/3$ except for the graphs from this family.  Furthermore, Dvo\v{r}\'ak and Mnich~\cite{dmnich}
proved that there exists $\varepsilon>0$ such that each $n$-vertex plane triangle-free graph in that every
$4$-cycle bounds a face has an independent set of size at least $n/(3-\varepsilon)$.

Let us recall that $\alpha(G)$ denotes the size of the largest independent set in $G$.
Dvo\v{r}\'ak and Mnich~\cite{dmnich} moreover described an algorithm with time complexity
$2^{O(\sqrt{a})}n$ that for an $n$-vertex planar triangle-free graph $G$ and an
integer $a\ge 0$ decides whether $\alpha(G)\ge (n+a)/3$.  This algorithm is based on a
generalization of the last result of the previous paragraph, which we will state after introducing a
couple of definitions.

Let $G$ be a plane graph and let $H$ be a subgraph of $G$, whose drawing in the plane
is inherited from $G$.  Note that each vertex or edge of $G$ that is not contained in $H$
is drawn in some face of $H$; we say that the faces of $H$ in that some part of $G$ is drawn
are \emph{full}.  Equivalently, a face $f$ of $H$ is full if and only if $f$ is not a face of $G$.
The supergraph of $H$ with respect to that we define the fullness of a face of $H$ will always
be clear from the context.  By $|G|$, we mean the number of vertices of $G$.
A \emph{$k$-face} of a plane graph is a face homeomorphic to an open disk bounded by a cycle of length $k$.
We are now ready to state the result.
\begin{theorem}[Dvo\v{r}\'ak and Mnich~\cite{dmnich}]\label{thm-largenosep}
There exists a constant $\gamma>0$ as follows.  Let $G$ be a plane triangle-free graph and let $H$ be its
subgraph.  If every $4$-cycle in $H$ bounds a face and every full face of $H$ is a $4$-face, then $\alpha(G)\ge \frac{|G|+\gamma|H|}{3}$.
\end{theorem}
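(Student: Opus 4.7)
The strategy is to reduce Theorem~\ref{thm-largenosep} to the Dvo\v{r}\'ak--Mnich theorem applied to the subgraph $H$ itself. Indeed, $H$ with its inherited plane drawing is a plane triangle-free graph in which every $4$-cycle bounds a face of~$H$, so by the Dvo\v{r}\'ak--Mnich theorem there exist $\gamma_1 > 0$ and an independent set $I_H$ of $H$ with $|I_H| \ge |H|/(3-\varepsilon) = |H|/3 + \gamma_1|H|$.

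The plan is then to extend $I_H$ into the interiors of the full $4$-faces of~$H$. For each full face $f$ with boundary $4$-cycle $C_f = v_1v_2v_3v_4$, the subgraph $G_f$ of $G$ drawn in the closed disk bounded by $C_f$ is plane and triangle-free, and $I_H \cap V(C_f)$ is an independent subset of the $4$-cycle $C_f$ (of size at most $2$, as $C_f$ has no chord in the triangle-free graph~$G$). I would aim to find, for every full face $f$, an independent set $J_f \subseteq V(G_f) \setminus V(C_f)$ having no neighbour in $I_H \cap V(C_f)$ and of size at least $(|V(G_f)| - 4)/3$. If this succeeds for every full face, then $I = I_H \cup \bigcup_f J_f$ is independent in~$G$ (edges of $G$ not in $H$ lie inside a single full face, so the $J_f$ do not see each other, and by construction no $J_f$ sees $I_H$), and summing gives $|I| \ge |H|/3 + \gamma_1|H| + \sum_f (|V(G_f)|-4)/3 = |G|/3 + \gamma_1|H|$, since the interior vertices of the full $4$-faces partition $V(G) \setminus V(H)$. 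This yields $\alpha(G) \ge (|G| + \gamma|H|)/3$ with $\gamma = 3\gamma_1$.

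The step I expect to be the main obstacle is constructing the $J_f$. The naive approach of applying Gr\"otzsch's theorem to the subgraph of $G_f$ remaining after deleting $I_H \cap V(C_f)$ together with its neighbours does not suffice in general: consider a full $4$-face with boundary $v_1v_2v_3v_4$ and a single interior vertex $u$ adjacent to both $v_1$ and $v_3$; the trace $I_H \cap V(C_f) = \{v_1, v_3\}$ admits no valid $J_f$ at all. To get around this I would not fix $I_H$ in advance; instead, for each full face $f$ one identifies the family of \emph{admissible} traces that $I_H$ may leave on $V(C_f)$ (those permitting a $J_f$ of the required size), and then jointly chooses $I_H$ so that its trace on every $C_f$ is admissible while retaining the Dvo\v{r}\'ak--Mnich lower bound $|I_H| \ge |H|/3 + \gamma_1|H|$. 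Executing this joint construction---and establishing the structural fact that sufficiently many admissible traces exist on each full face to permit a globally compatible choice---is the crux, and will likely require an inductive or discharging argument on~$H$ combined with precolouring-extension results (in the style of Aksionov's theorem) applied inside each~$G_f$.
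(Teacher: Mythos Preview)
First, a framing remark: Theorem~\ref{thm-largenosep} is quoted in this paper from Dvo\v{r}\'ak and Mnich~\cite{dmnich} and is \emph{not} proved here; it is used as a black box in the final proof of Theorem~\ref{thm-main}. So there is no proof in the present paper to compare your proposal against.

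On the merits of your plan, there is a genuine obstruction at exactly the step you flag as ``the crux,'' and it is not merely technical. The joint construction you describe can be outright impossible. Take $H$ to be a single $4$-cycle $v_1v_2v_3v_4$ with both of its faces full; inside one face place a single vertex $u$ adjacent to $v_1$ and $v_3$, and inside the other a single vertex $w$ adjacent to $v_2$ and $v_4$. The hypotheses of Theorem~\ref{thm-largenosep} hold. Any independent set $I_H\subseteq V(H)$ with $|I_H|\ge |H|/3+\gamma_1|H|>4/3$ has size~$2$, hence equals $\{v_1,v_3\}$ or $\{v_2,v_4\}$. In either case one of the two full faces has the inadmissible trace: the sole interior vertex is adjacent to both trace vertices, so $J_f=\emptyset$ while your accounting demands $|J_f|\ge \lceil(|V(G_f)|-4)/3\rceil=1$. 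Thus no $I_H$ meets both your size requirement and the admissibility constraints simultaneously. The deeper issue is that your bookkeeping forces the entire excess over $|G|/3$ to come from $I_H$, whereas here (with $\alpha(G)=3$ and $|G|/3=2$) the excess is supplied entirely by the interiors of the full faces.

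If you want to see how extension into full $4$-faces \emph{can} be made to work, the machinery in Section~\ref{sec-4faces} (developed for the related Lemma~\ref{lemma-sizeind}) is instructive. Instead of committing to one independent set in $H$, one takes a $(3m:m+8b)$-colouring of $H$ (Corollary~\ref{cor-frpltr}), passes via Lemma~\ref{deltagoodreduction} to a $(3m:m)$-colouring with positive \emph{margin} on every $4$-face boundary, and then extends into each full $4$-face by Lemma~\ref{deltaextend}, gaining an extra colour on an interior vertex. Positive margin means the colouring on each $4$-cycle is a nontrivial mixture of all three combinatorial types of $3$-colourings of that cycle, so no matter what sits inside the face, some component of the mixture extends favourably. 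This fractional framework is precisely what dissolves the rigidity that breaks your trace-based scheme.
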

Hence, if $\alpha(G)\le (n+a)/3$, then $G$ contains no such subgraph $H$ with more than $a/\gamma$ vertices, and consequently
it is easy to see that the tree-width of $G$ is at most $O(\sqrt{a})$.  This gives the aforementioned algorithm
by using the standard dynamic programming approach to deal with the bounded tree-width graph.

Our main result is a more precise characterization of $n$-vertex plane triangle-free graphs with no independent set larger than
$(n+a)/3$.  To state the result, we need to give a few more definitions.
We construct a sequence of graphs $T_1$, $T_2$, \ldots, which we call \emph{Thomas-Walls graphs} (Thomas and Walls~\cite{tw-klein} proved that they are exactly
the $4$-critical graphs that can be drawn in the Klein bottle without contractible cycles of length at most $4$).
Let $T_1$ be equal to $K_4$.  For $k\ge 1$, let $u_1u_3$ be any edge of $T_k$ that belongs to two triangles and let $T_{k+1}$ be obtained from $T_k-u_1u_3$
by adding vertices $x$, $y$ and $z$ and edges $u_1x$, $u_3y$, $u_3z$, $xy$, $xz$, and $yz$.  The first few graphs of this sequence are drawn in Figure~\ref{fig-thomaswalls}.
\begin{figure}
\begin{center}
\includegraphics[scale=0.8]{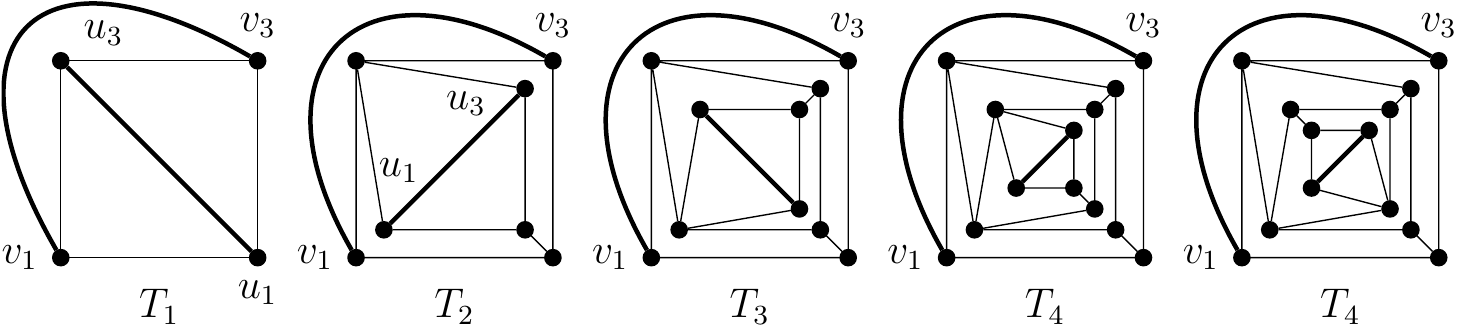}
\end{center}
\caption{Some Thomas-Walls graphs (the last two pictures demonstrate two combinatorially distinct drawings of $T_4$).}\label{fig-thomaswalls}
\end{figure}
For $k\ge 2$, note that $T_k$ contains unique $4$-cycles $C_1=u_1u_2u_3u_4$ and $C_2=v_1v_2v_3v_4$ such that $u_1u_3,v_1v_3\in E(G)$.
Let $T'_k=T_k-\{u_1u_3,v_1v_3\}$.  We also define $T'_1$ to be a $4$-cycle $C_1=C_2=u_1v_1u_3v_3$.
We call the graphs $T'_1$, $T'_2$, \ldots \emph{reduced Thomas-Walls graphs}, and we say that $u_1u_3$ and $v_1v_3$ are their \emph{interface pairs}.
Let us remark that the tight graphs found by Steinberg and Tovey~\cite{SteinbergTovey1993} are precisely those obtained from reduced Thomas-Walls graphs
by joining vertices of each interface pair by a path with three edges (and two new vertices).

Let $G$ be a plane triangle-free graph and let $H$ be a subgraph of $G$ isomorphic to a reduced
Thomas-Walls graph $T'_k$.  We say that $H$ is a \emph{clean Thomas-Walls $k$-tube} in $G$
if the $5$-faces of $H$ are not full.
Our characterization is based on the following strengthening of Theorem~\ref{thm-largenosep}.
\begin{theorem}\label{thm-main}
For every integer $k$, there exists a constant $\gamma_k>0$ as follows.
Every plane triangle-free graph without a clean Thomas-Walls $k$-tube satisfies $\alpha(G)\ge \frac{(1+\gamma_k)|G|}{3}$.
\end{theorem}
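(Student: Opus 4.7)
I plan to deduce Theorem~\ref{thm-main} from Theorem~\ref{thm-largenosep} by producing, in every plane triangle-free graph $G$ without a clean Thomas-Walls $k$-tube, a subgraph $H$ satisfying the hypotheses of Theorem~\ref{thm-largenosep} with $|H|\ge c_k|G|$, for some constant $c_k>0$ depending only on $k$. Once we have this, Theorem~\ref{thm-largenosep} gives
\[
\alpha(G)\ge \frac{|G|+\gamma|H|}{3}\ge \frac{(1+\gamma c_k)|G|}{3},
\]
so the conclusion holds with $\gamma_k=\gamma c_k$. Thus the whole argument reduces to a structural statement: absent a clean Thomas-Walls $k$-tube, one can find a ``large'' subgraph meeting the two face conditions of Theorem~\ref{thm-largenosep}.

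\textbf{Construction of $H$.} A natural candidate is a maximal subgraph $H\subseteq G$ with the property that every $4$-cycle of $H$ bounds a face of $H$ and every full face of $H$ is a $4$-face (which exists because $H=\emptyset$ qualifies). Every vertex of $V(G)\setminus V(H)$ lies in a unique full $4$-face $f$ of $H$, and the restriction of $G$ to the closed disk bounded by $\partial f$ is itself a plane triangle-free graph with outer $4$-cycle $\partial f$ and strictly smaller than $G$. Maximality means that any edge or vertex of $G$ strictly inside $f$ that we try to put into $H$ either creates a new $4$-cycle in $H$ that fails to bound a face, or creates a full face of length at least $5$. The goal is to charge each vertex of $V(G)\setminus V(H)$ to a nearby vertex of $H$ and thereby obtain $|H|\ge c_k|G|$.

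\textbf{Using the $k$-tube hypothesis and the main obstacle.} The key point, and the expected technical heart of the proof, is that the local obstructions to extending $H$ are precisely short segments of reduced Thomas-Walls graphs. Concretely, I would show by a careful case analysis inside a full $4$-face $f$ that if there are many vertices of $G$ inside $f$, then either another $4$-cycle could be added to $H$ (contradicting maximality), or a $4$-cycle of $G$ shares an edge with $\partial f$ in a configuration matching the recursive definition of $T_{k+1}$ from $T_k$, with the intervening $5$-face realized as an actual face of $G$. Iterating this along a chain of $4$-cycles, either the chain terminates after fewer than $k$ steps — producing enough $H$-vertices on the boundaries encountered to pay for the interior vertices by a standard discharging argument — or it runs for $k$ steps, at which point the chain together with its $5$-faces is precisely a clean Thomas-Walls $k$-tube in $G$, contradicting the hypothesis. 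The interface pairs in the definition of $T'_k$ naturally correspond to the two ends of this chain, which is why those edges must be excluded to keep $T'_k$ realizable as a plane subgraph in our setting. The subtle part will be the case analysis that shows \emph{the only} way maximality can fail to immediately extend $H$ is through a Thomas-Walls continuation; once that local classification is in place, an Euler-type discharging step distributing charge from interior vertices of full faces onto boundary vertices of $H$ delivers the desired linear bound $|H|\ge c_k|G|$.
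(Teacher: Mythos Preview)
Your overall strategy---find a subgraph $H$ meeting the hypotheses of Theorem~\ref{thm-largenosep} with $|H|\ge c_k|G|$---is sound, and this is indeed a component of the paper's argument. However, your proposed route to producing such an $H$ has a genuine gap: the central structural claim, that inside each full $4$-face of a \emph{maximal} $H$ the only obstruction to extension is a Thomas--Walls continuation, is false. Consider a plane triangle-free graph $G$ whose outer face is bounded by a $4$-cycle $C_1$, containing a nested $4$-cycle $C_2$, with an arbitrary girth-$\ge 6$ annulus graph $A$ between them and a huge girth-$\ge 6$ graph $B$ inside $C_2$. For $k\ge 2$, this $G$ contains no clean Thomas--Walls $k$-tube (it has no $5$-cycles at all). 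Yet $H=A$ is maximal: adding any vertex of $B$ destroys the property that $C_2$ bounds a face of $H$, since $C_2$ then has vertices on both sides; and $B$ contains no $4$-cycle to peel off a Thomas--Walls step. So a maximal $H$ can have $|H|/|G|$ arbitrarily small with no $k$-tube in sight, and your dichotomy ``extend $H$ or find a TW-step'' does not hold. (Your picture of the tube as a chain of $4$-cycles sharing edges is also off: in $T'_k$ only the two end $4$-cycles are $4$-cycles, and the body consists of $5$-faces.)

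The paper avoids this by a two-stage argument that is rather different in spirit. First it proves an \emph{additive} statement (Lemma~\ref{lemma-large-fst}): for each $a$ there is $f(k,a)$ such that any $G$ with $|G|\ge f(k,a)$ and no clean TW $k$-tube has $\alpha(G)\ge (|G|+a)/3$. This is where the real work on Thomas--Walls obstructions happens, but via colouring-extension machinery (Corollary~\ref{cor-weight}, drawing on Theorems~\ref{thm-weight} and~\ref{thm-cylinder}) together with the wideness theorem (Theorem~\ref{thm-wide}) and the fractional-colouring results of Sections~\ref{sec-4faces}--\ref{sec-tw}, rather than a local case analysis. Second, the multiplicative bound is obtained by an induction: pick an inclusion-minimal ``substantial'' non-facial $4$-cycle $C$ (one with at least $f(k,14)$ vertices inside); inside $C$ every non-facial $4$-cycle has fewer than $f(k,14)$ vertices in its interior, so deleting those interiors yields the desired $H$ with $|H|\ge |G_1|/(f(k,14)+1)$, and Theorem~\ref{thm-largenosep} combined with the additive bound handles $G_1$, while induction handles the outside $G_2$. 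The bound on interior sizes that you hoped to get from maximality is thus obtained instead from Lemma~\ref{lemma-large-fst}, which cannot be replaced by a discharging argument.
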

This is especially interesting in conjunction with the following observation.
Let $H$ be a clean Thomas-Walls $5$-tube in a plane triangle-free graph $G$, and let $C_1$ and $C_2$ be the cycles
bounding the $4$-faces of $H$.  Let $G'$ be the graph obtained from $G$ by removing the vertices of $V(H)\setminus V(C_1\cup C_2)$
and instead adding a copy of the reduced Thomas-Walls graph $T'_4$ with 4-faces bounded by cycles $C_1$ and $C_2$
and with the same interface pairs as $H$.  We say that $G'$ is a \emph{single-step TW-tube reduction} of $G$.
\begin{lemma}\label{lemma-redu}
Let $G$ be a plane triangle-free graph.  If $G'$ is a single-step TW-tube reduction of $G$, then $|G'|=|G|-3$ and
$\alpha(G)-|G|/3=\alpha(G')-|G'|/3$.
\end{lemma}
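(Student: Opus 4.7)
The equality $|G'|=|G|-3$ follows from a direct vertex count: by the iterative Thomas-Walls construction, $|V(T'_k)|=3k+1$, and the $4$-cycles $C_1, C_2$ are vertex-disjoint for $k\ge 3$, so the boundary $B:=V(C_1)\cup V(C_2)$ has $8$ vertices in both $T'_5$ and $T'_4$. The single-step TW-tube reduction therefore removes $|V(T'_5)|-|B|=8$ interior vertices and inserts $|V(T'_4)|-|B|=5$, giving $|G'|=|G|-3$.

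For the independence number identity, equivalent to $\alpha(G)=\alpha(G')+1$, the plan is to localize the comparison to the tube.  Because $H$ is clean, the only full faces of $H$ are the two $4$-faces bounded by $C_1, C_2$, and the same holds for the copy $H'$ of $T'_4$ in $G'$; hence every vertex of $G$ outside $V(H)$ (and of $G'$ outside $V(H')$) lies in one of these two $4$-faces and has no neighbor in the interior of the tube.  For each valid boundary independent set $(I_L, I_R)\subseteq B$, let $\beta_K(I_L, I_R)$ denote the maximum size of an independent subset of $V(K)\setminus B$ that extends $(I_L, I_R)$ to an independent set of $K\in\{T'_5, T'_4\}$, and let $F(I_L, I_R)$ denote the maximum contribution from $G-\text{int}(H)=G'-\text{int}(H')$, which depends only on the boundary trace.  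Then
\begin{align*}
\alpha(G) &= \max_{(I_L,I_R)}\bigl[\beta_{T'_5}(I_L, I_R)+F(I_L, I_R)\bigr],\\
\alpha(G') &= \max_{(I_L,I_R)}\bigl[\beta_{T'_4}(I_L, I_R)+F(I_L, I_R)\bigr],
\end{align*}
so the lemma reduces to the pointwise equality $\beta_{T'_5}(I_L, I_R)=\beta_{T'_4}(I_L, I_R)+1$ for every independent set $(I_L, I_R)\subseteq B$.

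I will establish this pointwise equality by a case analysis using the explicit iterated-gadget description of $T'_k$.  For $T'_4$, $\text{int}(T'_4)$ is a $5$-cycle (whose attachments to $C_1, C_2$ are read off from the construction), while $\text{int}(T'_5)$ consists of this $5$-cycle together with an additional $3$-vertex gadget in which a distinguished vertex $v$ (the ``$y$'' vertex of the new last gadget) has all its $T'_5$-neighbors either inside the new gadget or at a single vertex of $V(C_1)$.  Whenever that boundary vertex does not belong to $I_L$ (the generic case), adding $v$ to any maximum independent set inside $\text{int}(T'_4)$ compatible with $(I_L, I_R)$ produces an independent set inside $\text{int}(T'_5)$ compatible with $(I_L, I_R)$ of size $\beta_{T'_4}(I_L, I_R)+1$.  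Up to the natural symmetries of $T'_k$ (swap $C_1\leftrightarrow C_2$, and within each $C_i$ swap the two non-interface vertices), the remaining cases reduce to a short list of boundary configurations.

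The main obstacle is handling those exceptional boundaries where this ``free'' extension vertex $v$ is blocked.  For each such configuration, I will exhibit a different independent set in $\text{int}(T'_5)$ of the required size $\beta_{T'_4}(I_L, I_R)+1$, typically using vertices of the \emph{first} gadget (at the $C_2$ end) that remain available.  The matching upper bound $\beta_{T'_5}(I_L, I_R)\le\beta_{T'_4}(I_L, I_R)+1$ is proved by a symmetric case analysis: given any independent set in $\text{int}(T'_5)$ compatible with $(I_L, I_R)$, one deletes at most one interior vertex to obtain a compatible independent set in $\text{int}(T'_4)$.  Combining both inequalities gives the pointwise claim and, via the reduction above, $\alpha(G)-\alpha(G')=1=(|G|-|G'|)/3$, whence $\alpha(G)-|G|/3=\alpha(G')-|G'|/3$.
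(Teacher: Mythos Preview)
Your reduction to a pointwise identity over boundary traces is exactly the paper's approach. One simplification you miss: the four interface-pair vertices have degree $2$ in the tube, with both neighbours on their own boundary $4$-cycle, so they have no interior neighbours and their membership in $(I_L,I_R)$ never constrains $\beta_K$. The paper exploits this by taking only the four non-interface vertices $Z=\{u_2,u_4,v_2,v_4\}$ as the separator, cutting the case count from roughly $7\times 7$ down to $2^4=16$, and then simply asserts the equalities $\alpha_{Z'}(H)=\alpha_{Z'}(H')+1$ as a straightforward check.

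There is, however, a genuine wrinkle in your sketch of how that check would go. You picture $\mathrm{int}(T'_4)$ as literally sitting inside $\mathrm{int}(T'_5)$ with matching attachments to $C_1$, so that an optimal $T'_4$-compatible set can be transplanted into $T'_5$ and augmented by one vertex, and conversely a $T'_5$-set restricted to the five-cycle after deleting one vertex is $T'_4$-compatible. But the reduction identifies the outer $4$-cycle of $T'_4$ with the outer $4$-cycle of $T'_5$, \emph{not} with the intermediate $4$-cycle inside $T'_5$ that coincides with $T'_4$'s outer cycle in the recursive construction. Under the correct identification, the five ``common'' interior vertices have different $C_1$-adjacencies in the two graphs: in $T'_4$ three of them are adjacent to non-interface vertices of $C_1$, whereas in $T'_5$ none of them is. Consequently your upper-bound step fails outright: an independent set in $\mathrm{int}(T'_5)$ restricted to the five-cycle part need not be $T'_4$-compatible, since $T'_4$ imposes $C_1$-constraints on those vertices that $T'_5$ does not. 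In the lower-bound direction, when $I_L$ contains both non-interface vertices of $C_1$ all three extra vertices of $\mathrm{int}(T'_5)$ are blocked, so the augmentation step fails too (the identity still holds there, but via a different witness in the five-cycle, not via transplant-plus-one). The pointwise claim is correct, and the honest $16$-case check the paper alludes to goes through without difficulty; it is only your transplant-and-augment shortcut that does not survive the boundary identification.
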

\begin{proof}
Let $H_0$ be the clean Thomas-Walls $5$-tube in $G$ transformed into a clean Thomas-Walls $4$-tube $H'_0$ in $G'$,
and let $\{u_2,u_4\}$ and $\{v_2,v_4\}$ be the vertices of their $4$-faces not belonging to the interface pairs.
Let $H$ and $H'$ be obtained from $H_0$ and $H'_0$ by
removing the vertices of their interface pairs.  Note that $Z=\{u_2,u_4,v_2,v_4\}$ is a cut in both $G$ and $G'$
separating $H$ or $H'$ from the rest of the graph.  For each $Z'\subseteq Z$ and $F\in\{H,H'\}$, let $\alpha_{Z'}(F)$
be the size of the largest independent set $S$ in $F$ such that $S\cap Z=Z'$.  A straightforward case analysis
shows that $\alpha_{Z'}(H)=\alpha_{Z'}(H')+1$ for every $Z'\subseteq Z$, and thus $\alpha(G)=\alpha(G')+1$.
Since $|G|=|G'|+3$, the claim of the lemma follows.
\end{proof}
Let $G''$ be obtained from $G$ by repeating a single-step TW-tube reduction as long as the graph contains a clean Thomas-Walls $5$-tube.
We say that $G''$ is the \emph{TW-tube reduction} of $G$. Note that $G''$ is uniquely determined by $G$ (as a graph---$G''$ may have
combinatorially different drawings in the plane), since a single-step TW-tube reduction cannot create a new Thomas-Walls tube,
only decrease the length of an existing one.
By Theorem~\ref{thm-main} and Lemma~\ref{lemma-redu}, we have the following.
\begin{corollary}\label{cor-tw-redu}
Let $G$ be a plane triangle-free graph and let $G'$ be its TW-tube reduction, and let $a=3\alpha(G)-|G|$.
Let $\gamma_5$ be the constant of Theorem~\ref{thm-main} applied with $k=5$.
Then $|G'|\le a/\gamma_5$ and $3\alpha(G')-|G'|=a$.
\end{corollary}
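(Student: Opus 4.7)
The plan is to combine Lemma~\ref{lemma-redu} (an invariance statement) with Theorem~\ref{thm-main} specialized to $k=5$ (a lower bound in the reduced graph), using the definition of TW-tube reduction as an iterated application of the single-step operation until no clean Thomas-Walls $5$-tube remains.

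First I would verify that the iterative process is well-defined and terminates.  By Lemma~\ref{lemma-redu}, each single-step TW-tube reduction strictly decreases the number of vertices by $3$, so after finitely many steps no clean Thomas-Walls $5$-tube remains, and the resulting graph is $G'$.  Along the way, each single step clearly preserves both planarity and triangle-freeness, since the operation replaces a copy of $T'_5$ drawn inside a fixed region of the plane bounded by the two $4$-cycles $C_1,C_2$ by a copy of $T'_4$ drawn in the same region with the same interface pairs.

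Second I would track the invariant.  Applied once, Lemma~\ref{lemma-redu} gives $\alpha(G)-|G|/3 = \alpha(G_1)-|G_1|/3$, where $G_1$ is the result of one reduction; multiplying by $3$, this says $3\alpha(G)-|G| = 3\alpha(G_1)-|G_1|$.  Iterating, we conclude that $3\alpha(G')-|G'| = 3\alpha(G)-|G| = a$, which is the second claim of the corollary.

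Finally I would use Theorem~\ref{thm-main} to bound $|G'|$.  Since no further single-step reduction applies to $G'$, the graph $G'$ is a plane triangle-free graph containing no clean Thomas-Walls $5$-tube.  Hence Theorem~\ref{thm-main} with $k=5$ yields $\alpha(G') \ge (1+\gamma_5)|G'|/3$, equivalently $3\alpha(G')-|G'| \ge \gamma_5 |G'|$.  Combining this with the invariant $3\alpha(G')-|G'| = a$ gives $|G'| \le a/\gamma_5$, completing the proof.  There is no real obstacle here, as the corollary is essentially just the juxtaposition of Lemma~\ref{lemma-redu} and Theorem~\ref{thm-main}; the only small point worth checking carefully is that successive reductions do not interact badly, but this is already covered by the remark in the paper that a single-step reduction can only shorten an existing Thomas-Walls tube and cannot create a new one.
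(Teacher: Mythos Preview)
Your proposal is correct and follows exactly the approach the paper indicates: the paper states the corollary as an immediate consequence of Theorem~\ref{thm-main} and Lemma~\ref{lemma-redu}, and your argument spells out precisely this derivation---iterating Lemma~\ref{lemma-redu} to obtain the invariant $3\alpha(G')-|G'|=a$, then applying Theorem~\ref{thm-main} with $k=5$ to the reduced graph $G'$ (which by construction has no clean Thomas-Walls $5$-tube) to conclude $|G'|\le a/\gamma_5$.
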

The argument showing the uniqueness of TW-tube reduction also gives the following
approximate characterization of plane triangle-free graphs whose independence number
is not much larger than one third of the number of vertices.
\begin{corollary}\label{cor-structure}
Let $\gamma_5$ be the constant of Theorem~\ref{thm-main} applied with $k=5$.
Let $G$ be a plane triangle-free graph and $a$ an integer.  If $\alpha(G)\le\frac{|G|+a}{3}$, then
$G$ has a subgraph $G'$ with at most $a/\gamma_5$ vertices such that each full face $f$ of $G'$
is bounded by two $4$-cycles and the subgraph of $G$ drawn in the closure of $f$ is a reduced Thomas-Walls graph.
Equivalently, $G$ can be obtained from a reduced Thomas-Walls graph by $O(a)$ additions and removals of vertices and edges.
\end{corollary}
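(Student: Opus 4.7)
The plan is to combine Corollary~\ref{cor-tw-redu} with an explicit description of a subgraph $G'\subseteq G$ obtained by deleting the interior of each sufficiently long clean TW-tube in $G$. Applying Corollary~\ref{cor-tw-redu}, let $G''$ be the TW-tube reduction of $G$, so $|G''|\le a/\gamma_5$. The key observation is that, since a single-step TW-tube reduction only shortens a clean TW $5$-tube to a TW $4$-tube and does not create new TW-tubes, each maximal clean TW $k$-tube $H\subseteq G$ with $k\ge 4$ corresponds to a clean TW $4$-tube $\tilde H\subseteq G''$ sharing the boundary $4$-cycles $C_1,C_2$ and the interface pairs; all other vertices of $G$ are identified with vertices of $G''$ one-to-one.

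Define $G'$ to be the subgraph of $G$ obtained by removing, for each maximal clean TW $k$-tube $H$ in $G$ with $k\ge 4$, the interior vertices $V(H)\setminus V(C_1\cup C_2)$. To verify that every full face of $G'$ has the required structure, I argue by induction on $k$, using the recursive definition of $T_k$, that for $k\ge 4$ the two special $4$-cycles $C_1,C_2$ of $T'_k$ are vertex-disjoint and no edge of $T'_k$ joins $V(C_1)$ to $V(C_2)$; the inductive step checks that the three new vertices $x,y,z$ and the updated vertex $u_3$ all fail to be adjacent to the untouched special $4$-cycle. Combined with the clean condition (every $5$-face of $H$ is empty in $G$), deleting the interior of $H$ merges the $5$-faces of $H$ into a single new face $f$ of $G'$ bounded by the two disjoint $4$-cycles $C_1$ and $C_2$; the subgraph of $G$ drawn in the closure of $f$ is precisely $H\simeq T'_k$, a reduced Thomas-Walls graph.

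The bound on $|G'|$ is a routine counting: each processed tube contributes $8$ vertices of $V(C_1\cup C_2)$ to $G'$ but $13$ vertices of $\tilde H$ to $G''$, while the remaining parts of $G$ and $G''$ are identified vertex-by-vertex, so $|G'|\le|G''|\le a/\gamma_5$. The ``equivalently'' reformulation follows by observing that $G$ is reconstructed from $G'$ by gluing, into each full face, a reduced Thomas-Walls graph along the boundary $4$-cycles; thus $G$ is obtained from the disjoint union of these reduced Thomas-Walls graphs by at most $O(|G'|)=O(a)$ additions of vertices and edges.

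The main obstacle is the structural claim that for $k\ge 4$ no edge of $T'_k$ joins $V(C_1)$ to $V(C_2)$: this fails for $k=3$ (a direct check exhibits such an edge between the two special $4$-cycles of $T'_3$), which is precisely why the threshold $k\ge 4$ is essential in defining $G'$ and why shorter TW-tubes must be left untouched inside $G'$ rather than contracted into full faces.
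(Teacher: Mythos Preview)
Your argument is correct and follows exactly the approach the paper indicates (the paper itself gives no detailed proof, only the remark that ``the argument showing the uniqueness of TW-tube reduction also gives'' the corollary together with Corollary~\ref{cor-tw-redu}); you have supplied the details the paper leaves implicit, including the verification that the two boundary $4$-cycles of $T'_k$ are disjoint and non-adjacent precisely when $k\ge 4$. The only cosmetic point is that for the ``equivalently'' clause you arrive at a disjoint union of reduced Thomas-Walls graphs rather than a single one, but since the number of tubes is at most $|G'|/4=O(a)$, a single $T'_N$ of the appropriate total length can be cut into the required pieces with $O(a)$ further edits.
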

Conversely, suppose $G$ has such a subgraph $G'$. Note that the number $q$ of full faces of $G'$ is bounded by the number of
components of $G'$ with at least $4$ vertices, and thus $q\le |G'|/4$.  Furthermore, observe that
$\alpha(T)\le \frac{|T|+5}{3}$ for every reduced Thomas-Walls graph $T$. The total number of vertices
of $G$ contained in the closures of full faces of $G'$ is at most $|G|-|G'|+8q$, and consequently,
$\alpha(G)\le |G'|+\frac{|G|-|G'|+13q}{3}<\frac{|G|+6|G'|}{3}$, and thus the structure from Corollary~\ref{cor-structure}
indeed approximately describes plane triangle-free graphs with independence number close to third of their number of vertices.

Corollary~\ref{cor-tw-redu} is also interesting from the algorithmic point of view.
An algorithmic problem is called \emph{fixed-parameter tractable} with respect to a parameter $p$ if
there exists a computable function $f$, a polynomial $q$, and an algorithm that solves each input instance $Z$ in time $f(p(Z))q(|Z|)$.
This notion has been influential in the area of computational complexity, giving a plausible approach towards many otherwise intractable
problems~\cite{downey2012parameterized,Niedermeier2006,cygan2015parameterized}.

A popular choice of the parameter is the value of the solution; i.e., such fixed-parameter tractability results show that
the solution to the problem can be found quickly if its value is small.  However, in the case of the problem of finding the largest independent set
when restricted to planar graphs, this parameterization makes little sense---the problem is fixed-parameter tractable
for the trivial (and unhelpful) reason that all large planar graphs in the class have large independent sets.
In this setting, parameterization by the excess of the size of the largest independent set over the lower bound
for the independence number is more reasonable.

The algorithm of Dvo\v{r}\'ak and Mnich~\cite{dmnich} with time complexity $2^{O(\sqrt{a})}n$
to decide whether an $n$-vertex planar triangle-free graph $G$ has an independent set of size at least $\frac{n+a}{3}$
thus shows that the largest independent set problem is fixed-parameter tractable in triangle-free graphs $G$ when
parameterized by $a=3\alpha(G)-|G|$.  Since the TW-tube reduction can be found in linear time, Corollary~\ref{cor-tw-redu}
can be interpreted as saying that the problem has \emph{linear-size kernel}, i.e., any instance can be reduced in time $O(n)$
to an equivalent instance of size $O(a)$.  As Robertson et al.~\cite{quickly} showed, an $m$-vertex planar graph has
tree-width $O(\sqrt{m})$, and thus its largest independent set can be found in time $2^{O(\sqrt{m})}$.
Consequently, we have the following.
\begin{corollary}
There exists an algorithm with time complexity $O(n)+2^{O(\sqrt{a})}$ that given an $n$-vertex plane triangle-free
graph and an integer $a\ge 0$ decides whether $\alpha(G)\ge \frac{n+a}{3}$.
\end{corollary}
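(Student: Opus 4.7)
The plan is to assemble the linear-size kernel from Corollary~\ref{cor-tw-redu} with the sublinear-treewidth decomposition of planar graphs from Robertson et al.~\cite{quickly}, and to finish with the standard dynamic programme for maximum independent set on a tree decomposition. The excerpt itself essentially outlines this strategy; the task is to verify that the time accounting actually gives $O(n)+2^{O(\sqrt{a})}$ rather than $O(n\cdot 2^{O(\sqrt{a})})$.

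Given an input $(G,a)$, the first step is to compute the TW-tube reduction $G'$ of $G$. A single-step TW-tube reduction is a strictly local operation: it acts on an induced copy of $T'_5$ attached to two prescribed $4$-faces, deletes three vertices, and (as observed just after the definition of TW-tube reduction) cannot create a new Thomas-Walls tube. Iterating it until no clean Thomas-Walls $5$-tube remains can therefore be implemented in time $O(n)$, for instance by maintaining a queue of candidate $T'_5$-subgraphs and amortising the work of each reduction against the three vertices it deletes. Setting $a^{\star}:=3\alpha(G')-|G'|$, Lemma~\ref{lemma-redu} applied iteratively gives $a^{\star}=3\alpha(G)-|G|$, so the task reduces to deciding whether $a^{\star}\ge a$.

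Now branch on the size of $G'$. If $|G'|>a/\gamma_5$, then Corollary~\ref{cor-tw-redu}, which asserts $|G'|\le a^{\star}/\gamma_5$, forces $a^{\star}\ge \gamma_5|G'|>a$, and the algorithm outputs YES with no further computation. Otherwise $|G'|\le a/\gamma_5=O(a)$; by Robertson et al.~\cite{quickly}, $G'$ admits a tree decomposition of width $O(\sqrt{|G'|})=O(\sqrt{a})$ that can be constructed in time polynomial in $|G'|$, and running the standard maximum-independent-set dynamic programme on it computes $\alpha(G')$ in time $2^{O(\sqrt{a})}\cdot\mathrm{poly}(|G'|)=2^{O(\sqrt{a})}$. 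We then compare $3\alpha(G')-|G'|$ with $a$ and output the answer.

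Summing the two phases gives the claimed running time $O(n)+2^{O(\sqrt{a})}$. The main obstacle is algorithmic rather than combinatorial: confirming that the full sequence of single-step TW-tube reductions can be executed in total linear time, without having to rescan the graph after each reduction. This follows because the only places where a new reduction can become applicable are at the boundary $4$-faces of the one just performed, so a constant-size local update of the candidate queue suffices. Everything else---correctness of the kernel, the $|G'|=O(a)$ size bound, and the treewidth-based dynamic programme---is immediate from the results already cited.
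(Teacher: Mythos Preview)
Your proposal is correct and follows exactly the approach the paper sketches in the paragraph preceding the corollary: compute the TW-tube reduction in linear time to obtain a kernel, branch on whether $|G'|>a/\gamma_5$ (answer YES immediately, since Corollary~\ref{cor-tw-redu} gives $a^\star\ge\gamma_5|G'|>a$) or $|G'|\le a/\gamma_5$ (solve exactly via the $O(\sqrt{a})$-treewidth dynamic programme). The paper gives no separate formal proof of this corollary, so your explicit treatment of the branching and of the linear-time implementation of the reduction simply fills in details the paper leaves implicit.
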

Furthermore, Baker~\cite{baker1994approximation} gave an algorithm with time complexity $2^{O(1/\varepsilon)}m$
that given an $m$-vertex planar graph $G$ and a rational number $\varepsilon>0$ returns an independent set in $G$ of
size at least $(1-\varepsilon)\alpha(G)$.  Applying this algorithm to the TW-tube reduction, we obtain the following.
\begin{corollary}
There exists an algorithm with time complexity $O(n)+2^{O(1/\varepsilon)}$ that
given an $n$-vertex plane triangle-free
graph and a rational number $\varepsilon>0$ returns an independent set in $G$ of size at least
$\alpha(G)-\varepsilon(3\alpha(G)-|G|)$.
\end{corollary}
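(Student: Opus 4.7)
The plan is to combine the linear-size kernel provided by Corollary~\ref{cor-tw-redu} with Baker's EPTAS: solve the approximation problem on the kernel and lift the solution back to $G$ using the exact behaviour of independent sets under single-step reduction established in Lemma~\ref{lemma-redu}. First I would compute the TW-tube reduction $G'$ of $G$ in $O(n)$ time by repeatedly locating a clean Thomas-Walls $5$-tube and applying a single-step TW-tube reduction, pushing each step onto a stack for later reversal. This should be efficient because each reduction is entirely local and, by the discussion preceding Corollary~\ref{cor-tw-redu}, cannot create a new Thomas-Walls tube, so the detection, modification, and pointer-update cost can be made amortized $O(1)$ per step. Writing $t$ for the total number of steps, iterated Lemma~\ref{lemma-redu} yields $|G|=|G'|+3t$ and $\alpha(G)=\alpha(G')+t$.

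Next I would apply Baker's algorithm to the plane triangle-free graph $G'$ with parameter $\varepsilon':=c\varepsilon$, where $c=c(\gamma_5)>0$ is the constant fixed below. This produces, in time $2^{O(1/\varepsilon)}|G'|$, an independent set $S'$ in $G'$ with $|S'|\ge (1-\varepsilon')\alpha(G')$. I would then lift $S'$ to an independent set $S$ of $G$ by undoing the reductions one by one: at each reverse step, the current set meets the cut $Z=\{u_2,u_4,v_2,v_4\}$ separating the tube from the rest of the graph in some $Z'\subseteq Z$, and I would discard the portion of the set inside the Thomas-Walls $4$-tube $H'$ and replace it by a maximum $Z'$-compatible independent set inside the corresponding Thomas-Walls $5$-tube $H$. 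The identity $\alpha_{Z'}(H)=\alpha_{Z'}(H')+1$ established in the proof of Lemma~\ref{lemma-redu} then guarantees that each reverse step adds exactly one vertex and preserves independence across $Z$, so $|S|=|S'|+t$ and all $t$ lifts take $O(n)$ time in total.

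To complete the approximation analysis, Corollary~\ref{cor-tw-redu} gives $\alpha(G')=\frac{|G'|+a}{3}\le\frac{(1+\gamma_5)a}{3\gamma_5}$, where $a:=3\alpha(G)-|G|$. Choosing $c:=\frac{3\gamma_5}{1+\gamma_5}$ therefore forces $\varepsilon'\alpha(G')\le \varepsilon a$, and since $\alpha(G)-|S|=\alpha(G')-|S'|\le \varepsilon'\alpha(G')$, the lifted independent set meets the required size bound $|S|\ge \alpha(G)-\varepsilon a$. The total running time is $O(n)$ for the reduction and lifting plus $2^{O(1/\varepsilon)}|G'|$ for Baker's, and the bound $|G'|\le a/\gamma_5$ from Corollary~\ref{cor-tw-redu} keeps this within the stated $O(n)+2^{O(1/\varepsilon)}$ budget.

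The combinatorial content is already contained in Lemma~\ref{lemma-redu} and Corollary~\ref{cor-tw-redu}, and the approximation arithmetic above is routine. The only real obstacle is the implementation of the TW-tube reduction and its inverse in linear total time: detecting clean Thomas-Walls $5$-tubes, performing the local graph surgery, and updating the incidence data structures after each reduction must be handled with care so that the cost per step is amortized constant.
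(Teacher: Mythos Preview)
Your approach is exactly the paper's one-sentence argument (``Applying [Baker's] algorithm to the TW-tube reduction, we obtain the following''): compute the kernel $G'$, run Baker's EPTAS there with a rescaled parameter, and lift the solution back step by step using the identity $\alpha_{Z'}(H)=\alpha_{Z'}(H')+1$ from the proof of Lemma~\ref{lemma-redu}. Your approximation arithmetic with $\varepsilon'=\tfrac{3\gamma_5}{1+\gamma_5}\,\varepsilon$ is correct, and your identification of the linear-time implementation of the reduction and its inverse as the only non-routine point matches the paper's own level of detail.

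One caveat on your last sentence: the bound $|G'|\le a/\gamma_5$ does not by itself force $2^{O(1/\varepsilon)}|G'|$ into the stated $O(n)+2^{O(1/\varepsilon)}$ budget, since $a=3\alpha(G)-|G|$ may be linear in $n$; what the argument literally yields is $O(n)+2^{O(1/\varepsilon)}\cdot a$. The paper is equally terse here, so this is not a divergence from the intended proof, but the step as you wrote it is not justified.
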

This improves over Baker's approximation for $n$-vertex plane triangle-free graphs whose independence number
is larger than $n/3$ by a sublinear additive factor.

In the rest of the paper, we present the proof of Theorem~\ref{thm-main}.
We start by giving some preliminary results in the next section, followed by
exploration of independent sets in graphs with almost all faces of length $4$ (Section~\ref{sec-4faces})
and in graphs containing a large reduced Thomas-Walls graph as a subgraph (Section~\ref{sec-tw}).
Finally, we derive the proof of the main result from previous structural results regarding
extensions of precolouring in triangle-free planar graphs, in Section~\ref{sec-main}.

\section{Preliminary results}

We will occasionally need to glue together independent sets from different subgraphs of a graph,
which we can do at the expense of removing the vertices in the intersection of the subgraphs from both
independent sets.
\begin{observation}\label{independenceadd}
Suppose $G$ is a plane graph and $H$ a subgraph of $G$.  Let $F$ be the subgraph
of $H$ induced by vertices incident with the full faces of $H$.
Given independent sets $I$ of $H$ and $J$ of $G-V(H)$, the set $(I\setminus V(F))\cup J$ is an independent
set of $G$.  Consequently,
$$\alpha(G)\ge \alpha(H)+\alpha(G-V(H))-\alpha(F)\ge \alpha(H)+\alpha(G-V(H))-|F|.$$
\end{observation}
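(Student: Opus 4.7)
The plan splits into two clean steps: first verify that $S = (I\setminus V(F))\cup J$ is independent in $G$, and then optimize $I$ and $J$ to deduce the numerical inequality.

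For the independence, I would take an arbitrary edge $e=uv$ of $G$ and show its endpoints cannot both lie in $S$. If $u,v\in V(H)$, then $e\in E(H)$ and independence of $I$ in $H$ kills the case. If $u,v\notin V(H)$, then $e\in E(G-V(H))$ and independence of $J$ does the job. The only subtle case is when, say, $u\in V(H)$ and $v\notin V(H)$. Here I would invoke planarity: since the drawing of $H$ is inherited from $G$, the vertex $v$ lies in some face $f$ of $H$, and the edge $uv$ is drawn inside $\overline{f}$. By the definition recalled in the paper, the presence of $v$ (or of the edge $uv$) in $f$ makes $f$ a full face of $H$, and $u$ is incident with $f$, hence $u\in V(F)$. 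This contradicts $u\in I\setminus V(F)$, so this case is impossible.

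For the bound, I would choose $I$ to be a maximum independent set of $H$ and $J$ a maximum independent set of $G-V(H)$. The set $I\cap V(F)$ is independent in $F$, so $|I\cap V(F)|\le \alpha(F)$, whence $|I\setminus V(F)|\ge \alpha(H)-\alpha(F)$. Since $J\subseteq V(G)\setminus V(H)$ is disjoint from $I\setminus V(F)$, the union $S$ has size at least $\alpha(H)+\alpha(G-V(H))-\alpha(F)$, and $S$ is independent by the previous paragraph, giving the first inequality. The second inequality is the trivial estimate $\alpha(F)\le |F|$.

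The only real obstacle is the mixed-edge case in the independence argument, and this is essentially a tautological unpacking of the definition of ``full face.'' Once that is observed the whole statement is immediate, which matches its status in the paper as an observation rather than a lemma.
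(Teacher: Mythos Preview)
Your argument is almost complete, but there is a small gap in the first case of the independence check. You write ``If $u,v\in V(H)$, then $e\in E(H)$,'' but this need not hold: $H$ is only assumed to be a subgraph of $G$, not an induced subgraph, so an edge $e=uv\in E(G)$ with both endpoints in $V(H)$ may well be absent from $H$. In that situation $e$ is drawn in some face $f$ of $H$, which is therefore full, and both $u$ and $v$ are incident with $f$; hence $u,v\in V(F)$ and neither can lie in $I\setminus V(F)$. This is exactly the same mechanism you invoke in the mixed case, so the fix is immediate once you split the case $u,v\in V(H)$ according to whether $e\in E(H)$ or not.

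The paper itself gives no proof of this observation, treating it as self-evident; your write-up, with the above patch, is a perfectly acceptable justification.
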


Let $G$ be a graph and $c$ a positive integer.  By $[c]$, we mean the set $\{1,2,\ldots, c\}$.
A function $\varphi:V(G)\to 2^{[c]}$ is a \emph{colouring of $G$ by subsets of $[c]$} if $\varphi(u)\cap\varphi(v)=\emptyset$ for
all $uv\in E(G)$.  We say that $\varphi$ is a \emph{$(c:a)$-colouring} if $|\varphi(v)|=a$ for all $v\in V(G)$.  By a \emph{$c$-colouring}, we mean a $(c:1)$-colouring
(this matches the usual definition of a proper $c$-colouring); for brevity, we will write $\varphi(v)=x$ instead of $\varphi(v)=\{x\}$ for the integer $x\in [c]$ used to colour $v$.
Let us recall a well-known relation between colourings and independent sets.

\begin{lemma}\label{lemma-indset}
For any colouring $\varphi$ of a graph $G$ by subsets of $[c]$, we have
$$\alpha(G)\ge \frac{\sum_{v\in V(G)}|\varphi(v)|}{c}.$$
\end{lemma}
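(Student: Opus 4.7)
The plan is to use the standard double-counting / averaging argument that relates fractional colourings to independent sets. For each colour $i \in [c]$, I would define
$$S_i = \{v \in V(G) : i \in \varphi(v)\}.$$
The key observation is that each $S_i$ is an independent set in $G$: if $uv \in E(G)$, then $\varphi(u) \cap \varphi(v) = \emptyset$ by the definition of a colouring by subsets of $[c]$, so $u$ and $v$ cannot both contain $i$ in their lists, meaning they cannot both lie in $S_i$.

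Next I would count incidences between vertices and colours in two ways. Summing $|\varphi(v)|$ over all vertices counts each pair $(v,i)$ with $i \in \varphi(v)$ exactly once, and so does summing $|S_i|$ over all colours $i \in [c]$. Hence
$$\sum_{i=1}^{c} |S_i| = \sum_{v \in V(G)} |\varphi(v)|.$$
By the pigeonhole / averaging principle, there exists some $i^\star \in [c]$ with
$$|S_{i^\star}| \ge \frac{1}{c}\sum_{v \in V(G)} |\varphi(v)|.$$
Since $S_{i^\star}$ is independent, this gives the desired bound on $\alpha(G)$.

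There is essentially no obstacle here; the only thing to be slightly careful about is verifying that the definition of ``colouring by subsets of $[c]$'' given in the paper (namely $\varphi(u) \cap \varphi(v) = \emptyset$ for all edges $uv$, without any requirement that the sets be nonempty) is exactly what is needed to conclude that each $S_i$ is independent. This holds directly from the disjointness condition, so the argument goes through as stated and no further hypotheses are required.
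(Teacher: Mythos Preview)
Your proposal is correct and follows exactly the same approach as the paper: define the colour classes $S_i=\{v:i\in\varphi(v)\}$, note they are independent, and average. The paper's proof is simply a terser version of what you wrote.
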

\begin{proof}
For any colour $i\in[c]$, the set $\{v\in V(G):i\in\varphi(v)\}$ is independent, and the sum of the sizes of these sets is $\sum_{v\in V(G)}|\varphi(v)|$.
Hence, at least one of the independent sets has the required size.
\end{proof}

Consider a plane triangle-free graph $G$ and let $v$ be a vertex of $G$ of degree at most $4$; let $v_1$, \ldots, $v_t$ with $t\le 4$ be neighbors
of $v$ in order around $v$ according to the drawing of $G$.  Let $G'$ be the graph obtained from $G-v$ by adding a $2t$-cycle $v_1v'_1v_2v'_2\ldots v_tv'_t$
with new vertices $v'_1$, \ldots, $v'_t$.  Note that $G'$ is plane triangle-free graph, and it follows from results of Gimbel and Thomassen~\cite{gimbel}
for $t\le 3$ and from the result of Dvo\v{r}\'ak and B. Lidick\'{y}~\cite{col8cyc} for $t=4$ that $G'$ has a $3$-colouring with vertices $v_1$, \ldots, $v_t$
coloured by colour $1$ and vertices $v'_1, \ldots, v'_t$ coloured by colour $2$.  Hence, there exists a $3$-colouring of $G-v$ in that all neighbors of $v$
have colour $1$, and thus the following is true.
\begin{lemma}\label{lemma-mono}
If $G$ is a planar triangle-free graph and a vertex $v\in V(G)$ has degree at most $4$, then there exist a colouring $\varphi$ of $G$ by subsets of $[3]$
such that $|\varphi(u)|=1$ for all $u\in V(G)\setminus \{v\}$ and $|\varphi(v)|=2$.
\end{lemma}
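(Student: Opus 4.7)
The plan is to read the proof directly off the discussion preceding the lemma: first produce a proper $3$-colouring $c$ of $G-v$ in which every neighbour of $v$ receives colour $1$, and then set $\varphi(v)=\{2,3\}$ and $\varphi(u)=\{c(u)\}$ for every $u\in V(G)\setminus\{v\}$. Since all neighbours of $v$ get colour $1$ in $c$ and $\{1\}\cap\{2,3\}=\emptyset$, the resulting $\varphi$ is a valid colouring of $G$ by subsets of $[3]$ with $|\varphi(v)|=2$ and $|\varphi(u)|=1$ for every other vertex, as required.

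To produce the required colouring of $G-v$, I would use the gadget construction from the paragraph above the lemma: let $v_1,\ldots,v_t$ be the neighbours of $v$ listed in the cyclic order inherited from the planar embedding, delete $v$, and insert a $2t$-cycle $v_1 v'_1 v_2 v'_2\cdots v_t v'_t$ with new vertices $v'_1,\ldots,v'_t$ drawn inside the face of $G-v$ vacated by removing $v$. The resulting graph $G'$ is plane; it is also triangle-free, because the inserted cycle has length $2t\ge 4$, its only vertices shared with $G-v$ are the neighbours $v_1,\ldots,v_t$ of $v$ in $G$ (which are pairwise non-adjacent, as $G$ is triangle-free), and each $v_i$ is incident in the new cycle only with the new vertices $v'_{i-1}$ and $v'_i$.

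Now I would invoke the appropriate precolouring-extension theorem to find a proper $3$-colouring of $G'$ in which $v_1,\ldots,v_t$ all receive colour $1$ and $v'_1,\ldots,v'_t$ all receive colour $2$: for $t\le 3$ this follows from the Gimbel--Thomassen results on precolouring extension in triangle-free planar graphs, and for $t=4$ from the Dvo\v{r}\'ak--Lidick\'y result on colourings of $8$-cycles in triangle-free planar graphs. Restricting this $3$-colouring to $V(G)\setminus\{v\}$ yields the $3$-colouring $c$ of $G-v$ used in the previous paragraph.

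The main (and essentially only) non-trivial point is verifying that the hypotheses of the cited extension theorems are satisfied; namely that the precoloured $2t$-cycle bounds a face of $G'$ (which holds by construction, since we drew it in the face vacated by $v$) and that $G'$ is triangle-free with no problematic short separating cycles. These are the very hypotheses the quoted theorems are designed to handle, so after the construction the lemma follows immediately.
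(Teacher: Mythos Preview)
Your proposal is correct and follows essentially the same approach as the paper: the argument you describe is exactly the one given in the paragraph immediately preceding the lemma, namely building the auxiliary graph $G'$ by replacing $v$ with a $2t$-cycle through its neighbours, invoking Gimbel--Thomassen (for $t\le 3$) or Dvo\v{r}\'ak--Lidick\'y (for $t=4$) to obtain a $3$-colouring of $G'$ with all $v_i$ coloured $1$, and then setting $\varphi(v)=\{2,3\}$. The paper itself does not verify the hypotheses of the cited extension theorems in any more detail than you do, so your level of rigour matches theirs.
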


As observed by Steinberg and Tovey~\cite{SteinbergTovey1993}, together with Lemma~\ref{lemma-indset} and the fact
that every planar triangle-free graph has at least one vertex of degree at most three, this implies the following.
\begin{corollary}\label{cor-add1}
Every planar triangle-free graph $G$ has an independent set of size at least $\frac{|G|+1}{3}$.
\end{corollary}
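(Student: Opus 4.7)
The plan is to combine Lemma~\ref{lemma-mono} with Lemma~\ref{lemma-indset}, using the fact that every planar triangle-free graph contains a vertex of low degree. The core observation is that Lemma~\ref{lemma-mono} lets us assign two colours to one chosen low-degree vertex while every other vertex still receives one colour, which pushes the total colour-weight over $|G|$; then averaging via Lemma~\ref{lemma-indset} yields the desired independent set.

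First I would verify the existence of a suitable vertex on which to apply Lemma~\ref{lemma-mono}. If $G$ has at most one vertex the bound is trivial, so assume $|G|\ge 2$. By Euler's formula, a planar triangle-free graph on $n\ge 3$ vertices has at most $2n-4$ edges, so the sum of degrees is at most $4n-8$; in particular, the average degree is strictly less than $4$, so $G$ has at least one vertex $v$ with $\deg(v)\le 3\le 4$. (The corner cases $|G|\le 2$ can be handled by hand: any vertex works.)

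Next I would apply Lemma~\ref{lemma-mono} to this vertex $v$ to obtain a colouring $\varphi$ of $G$ by subsets of $[3]$ with $|\varphi(v)|=2$ and $|\varphi(u)|=1$ for every $u\in V(G)\setminus\{v\}$. Consequently
\[
\sum_{u\in V(G)}|\varphi(u)|=(|G|-1)+2=|G|+1.
\]
Invoking Lemma~\ref{lemma-indset} with $c=3$ then gives $\alpha(G)\ge (|G|+1)/3$, as required.

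There is essentially no obstacle here: the entire proof is a two-line deduction once Lemma~\ref{lemma-mono} and Lemma~\ref{lemma-indset} are in hand, and the only external input is the elementary consequence of Euler's formula guaranteeing a vertex of degree at most three. The only thing worth a moment's care is handling the tiny cases $|G|\in\{0,1,2\}$, which are immediate since the empty independent set has size $\ge (|G|+1)/3$ as soon as $|G|\le -1$, while for $|G|\in\{0,1,2\}$ the bound $\lceil(|G|+1)/3\rceil\le 1$ is realised trivially.
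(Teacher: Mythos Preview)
Your proposal is correct and follows exactly the approach the paper sketches: the corollary is stated there as an immediate consequence of Lemma~\ref{lemma-mono}, Lemma~\ref{lemma-indset}, and the fact that every planar triangle-free graph has a vertex of degree at most three. Your treatment is merely more explicit about the degree bound and the trivial small cases (the $|G|=0$ case being a harmless convention issue), but the argument is identical.
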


We will need a umber of results concerning a precolouring extension in triangle-free planar graphs.
\begin{theorem}[Aksionov~\cite{aksenov}]\label{thm-extend}
Let $G$ be a plane triangle-free graph with the outer face bounded by a cycle $K$ of length at most $5$.
Every $3$-colouring of $K$ extends to a $3$-colouring of $G$.
\end{theorem}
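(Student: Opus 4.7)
The plan is to prove Theorem~\ref{thm-extend} by strong induction on $|V(G)|$, treating $G$ as a minimum counterexample. In the inductive step I would first rule out several structural features that a minimum counterexample cannot possess, and then dispatch the remaining irreducible case by a vertex-identification argument that invokes Grötzsch's theorem.

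I would establish three standard reductions. Since $G$ is triangle-free, any chord of $K$ would, together with a sub-arc of $K$, form a triangle, so $K$ is automatically induced. If $G$ has a separating cycle $C$ of length $4$ (length-$3$ separators are excluded by triangle-freeness), first extend $\psi$ to the outer side of $C$ by the induction hypothesis, then read off the resulting $3$-colouring of $C$ and extend it by induction to the inner side. If $G$ has a vertex $v \in V(G)\setminus V(K)$ of degree at most $2$, extend $\psi$ to $G-v$ by induction and greedily colour $v$. After these reductions, $G$ is $2$-connected, $K$ is an induced cycle, and every interior vertex has degree at least $3$. For $|K|\le 4$ the remaining case follows from Grötzsch's theorem: $|K|=3$ forces $G=K$, and for $|K|=4$ one identifies a pair of opposite vertices of $K$ sharing a colour to obtain a plane triangle-free graph with fewer vertices, whose $3$-colourability via Grötzsch's theorem pulls back to the desired extension.

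For the main case $|K|=5$, write $K=v_1v_2v_3v_4v_5$. Any proper $3$-colouring of $C_5$ uses all three colours, and by pigeonhole some pair of non-adjacent $K$-vertices at distance two on $K$, say $v_1$ and $v_3$, receives the same colour. The intended reduction identifies $v_1$ and $v_3$ into a single vertex $v^\star$; after a topological cleanup around $v_2$ (collapsing the resulting $2$-gon), this produces a plane triangle-free graph $G'$ of fewer vertices whose outer face is a $4$-cycle through $v^\star$, $v_4$, and $v_5$. Induction applied to $G'$ extends the induced colouring of that $4$-face, and the extension pulls back to the desired extension of $\psi$ on $G$.

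The main obstacle is that the identification $v_1 \sim v_3$ may introduce a triangle in $G'$, namely when $v_1$ and $v_3$ share a neighbour other than $v_2$, or more generally when there is a path of length two in $G$ between them distinct from $v_1v_2v_3$. When such a configuration arises, I would either switch to a different monochromatic pair from $\psi$ (the cycle structure of $C_5$ normally offers two such pairs), or exploit the offending common neighbour $w$ by colouring $w$ with the third colour and applying the induction hypothesis to a strictly smaller subgraph bounded by a short cycle through $w$. Enumerating all such obstructions and arranging that at least one reduction always applies, while keeping the induction measure strictly decreasing, is the principal technical difficulty of the proof.
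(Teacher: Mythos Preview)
The paper does not prove this theorem; it is quoted with a citation to Aksionov and used as a black box thereafter. There is thus no ``paper's own proof'' to compare against.

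On the merits of your sketch: the minimal-counterexample framework is standard, but two steps do not go through as written. First, the $|K|=4$ case is not correct. Identifying a monochromatic opposite pair $v_1\sim v_3$ and invoking Gr\"otzsch on the quotient $G'$ yields \emph{some} $3$-colouring of $G$ in which $v_1$ and $v_3$ agree; it gives no control over the colours of $v_2$ and $v_4$. If the Gr\"otzsch colouring assigns $v_2$ and $v_4$ the same colour while $\psi(v_2)\neq\psi(v_4)$, no permutation of colours recovers $\psi$. You also silently assume the identification keeps the graph triangle-free, which fails whenever $G$ contains a $v_1$--$v_3$ path of length~$3$; the reductions you listed (separating $4$-cycles, low-degree interior vertices) do not exclude this. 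Second, for $|K|=5$ you correctly locate the obstacle but do not resolve it: switching to the other monochromatic pair or exploiting a common neighbour each spawns further cases that you leave open, and you explicitly call this ``the principal technical difficulty of the proof.'' That is an accurate self-assessment---what you have is a plan, not a proof. (Minor: you should also reduce along separating $5$-cycles, not only $4$-cycles.)
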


\begin{theorem}[Gimbel and Thomassen~\cite{gimbel}]\label{thm-extend6}
Let $G$ be a triangle-free plane graph with the outer face bounded by a cycle $K$ of length $6$.
If a $3$-colouring of $K$ does not extend to a $3$-colouring of $G$, then $G$ contains a subgraph $G'$ with
outer face bounded by $K$, such that all other faces of $G'$ have length $4$.
\end{theorem}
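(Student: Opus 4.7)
The plan is to argue by induction on $|V(G)|+|E(G)|$. Given a $3$-colouring $\varphi$ of $K$ that does not extend to $G$, I would consider a vertex-and-edge minimal subgraph $G^\star\subseteq G$ containing $K$ on its outer face such that $\varphi$ still fails to extend to $G^\star$, and then aim to show that $G^\star$ itself has every interior face of length exactly $4$; this $G^\star$ is then the desired subgraph. I will use Theorem~\ref{thm-extend} as the main extension tool on any face of length at most $5$ that appears during the analysis.

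First, I would rule out interior vertices of low degree in $G^\star$: if an interior vertex $v$ had degree at most $2$, then by minimality $\varphi$ would extend to $G^\star-v$, and since $v$ has at most two forbidden colours, the extension could be completed to $v$, contradicting non-extendability. Next, I would rule out chords of $K$ with distinctly coloured endpoints: triangle-freeness forces any chord of the $6$-cycle $K$ to split it into two $4$-cycles, and Theorem~\ref{thm-extend} then extends $\varphi$ separately inside each $4$-cycle, again a contradiction. A chord with $\varphi$-monochromatic endpoints, on the other hand, already obstructs extension on its own, so by minimality $G^\star=K\cup\{\text{chord}\}$, and then the two interior faces of $G^\star$ are $4$-cycles, as required. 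The same reduction strategy -- applying Theorem~\ref{thm-extend} inside a short cycle to conclude re-extendability across its interior, then deleting everything strictly inside -- rules out any non-facial cycle of length at most $5$ in the interior of $G^\star$.

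The main obstacle is then to show that no interior face of $G^\star$ has length $5$ or more. Suppose for contradiction that $f$ is such a face, bounded by a closed walk $C$. Picking an edge $e\in E(C)\setminus E(K)$ and using minimality, $\varphi$ extends to $G^\star-e$ via some $\psi$, but since $\psi$ fails to extend to $G^\star$ the endpoints of $e$ must receive the same colour under every such $\psi$. Performing this argument for each interior edge of $C$ yields a rigid system of forced colour-equalities along $C$. Combining these constraints with $\varphi$ on $K$ and applying Theorem~\ref{thm-extend} to small modifications of the graph near $f$ (for instance, identifying two vertices of $C$ at distance $2$, which creates no new triangle, and extending across the resulting faces of length at most $5$) should contradict the existence of $\psi$ itself. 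The case analysis -- sorting out how $C$ meets $K$, how the forced equalities propagate, and how they interact with the prescribed colouring of $K$ -- is where I expect the bulk of the work to lie, and is the step most likely to require several subcases and possibly a strengthened inductive statement that records not only non-extendability but also the pattern of forced equalities along a designated cycle.
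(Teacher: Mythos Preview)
This theorem is not proved in the present paper; it is cited from Gimbel and Thomassen~\cite{gimbel} as a preliminary result and used without proof. There is therefore no argument in the paper to compare your proposal against.

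Regarding your sketch on its own merits: the opening reductions (no interior vertex of degree $\le 2$, handling chords of $K$, no non-facial cycle of length $\le 5$) are correct and standard for a critical-subgraph argument. The gap is precisely where you place it, and your plan for closing it is not convincing as written. Deleting an interior edge $e$ of the long face and observing that both endpoints of $e$ receive the same colour in every extension of $\varphi$ to $G^\star-e$ is correct, but different edges $e$ yield different ambient graphs $G^\star-e$, so these forced equalities live in different colourings and do not assemble into a single ``rigid system'' on $C$. Your proposed identification of two vertices of $C$ at face-distance $2$ also need not avoid creating a triangle: that would require ruling out a second path of length $2$ between them, which you have not done (and which, when it exists, yields a $4$-cycle that may well be facial and hence survives your earlier reductions). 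The proof in Gimbel and Thomassen does not proceed by colour-forcing case analysis of this kind; it uses an Euler-formula counting argument on the critical subgraph to force all interior faces to have length $4$ directly.
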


\begin{corollary}\label{cor-patches}
Suppose $G$ is a triangle-free plane graph with the outer face bounded by a $6$-cycle $K=v_1v_2\ldots v_6$,
with a subgraph $H$ containing $K$ such that every other face of $H$ has length $4$.  Let $S=\{v_1,v_3,v_5\}$.
A $3$-colouring of $S$ extends to a $3$-colouring of $G$ if and only if some two vertices of $S$ have the same colour.
\end{corollary}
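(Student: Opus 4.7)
The plan is to prove the biconditional after a common reduction. Because every face of $H$ has even length (interior $4$-cycles and outer $6$-cycle $K$), $H$ is bipartite; let $A$ and $B$ denote its two parts, with $S \subseteq A$ and $\{v_2, v_4, v_6\} \subseteq B$. The key reduction is that any proper $3$-colouring of $H$ extends to a proper $3$-colouring of $G$: each interior $4$-face of $H$ is bounded by a $4$-cycle $C$, and the subgraph of $G$ drawn in the closed disk bounded by $C$ is a plane triangle-free graph with outer face $C$, so Theorem~\ref{thm-extend} extends the colouring of $C$ to it. Thus it suffices to decide when $\varphi$ extends from $S$ to $H$.

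For the ``if'' direction, suppose two of $v_1, v_3, v_5$ receive a common colour $\alpha$ under $\varphi$ and the remaining one receives $\beta$ (possibly $\alpha = \beta$). Choose $\gamma \in \{1,2,3\} \setminus \{\alpha, \beta\}$, which exists since $|\{\alpha, \beta\}| \le 2$. Define $\varphi_H$ on $V(H)$ by $\varphi_H = \varphi$ on $S$, $\varphi_H = \alpha$ on $A \setminus S$, and $\varphi_H = \gamma$ on $B$. Every $A$-vertex receives a colour in $\{\alpha, \beta\}$ and every $B$-vertex receives colour $\gamma \notin \{\alpha, \beta\}$; since all edges cross the bipartition, $\varphi_H$ is a proper $3$-colouring of $H$ extending $\varphi$, and we are done via the reduction.

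For the ``only if'' direction, I would argue by contradiction, assuming $\varphi(v_1), \varphi(v_3), \varphi(v_5)$ are three distinct colours yet $\varphi$ extends to a $3$-colouring $\varphi'$ of $H$. Restricted to $K$, the colouring $\varphi'$ is forced: each $v_{2j}$ is adjacent to two vertices of $S$ of distinct colours, so $\varphi'(v_{2j})$ is the unique remaining colour, producing the cyclic rainbow pattern $(a, c, b, a, c, b)$ on $v_1, \ldots, v_6$. To each oriented edge $uv$ of $H$ assign $\psi(uv) \in \{-1, +1\}$ representing $\varphi'(v) - \varphi'(u) \pmod 3$, so that $\psi(v\to u) = -\psi(u\to v)$. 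Around any interior $4$-face the sum of $\psi$ values is $\equiv 0 \pmod 3$ automatically, but as a sum of four $\pm 1$ terms lying in $\{-4, -2, 0, 2, 4\}$ it must equal $0$ as an integer. Summing face $\psi$-sums over all interior faces with consistent orientations cancels every interior edge (each traversed once in each direction) and leaves the signed $\psi$-sum around $K$, which must therefore be $0$. The rainbow pattern, however, yields $\psi$-sum $\pm 6$, a contradiction.

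The crux is the ``only if'' direction: the identity $\sum \psi \equiv 0 \pmod 3$ around any closed walk is automatic, but contradicting the rainbow requires an \emph{integer} identity, and this upgrading relies on each interior face being a $4$-cycle, the unique case where the mod-$3$ constraint on four $\pm 1$ terms forces the integer sum to be exactly $0$ (longer even face lengths would leave slack). Once this is established locally, the Stokes-style global cancellation that transfers the identity to $K$ is routine.
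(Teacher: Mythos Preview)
Your proof is correct. The paper does not give an explicit proof of this corollary; it simply places it after Theorem~\ref{thm-extend6} (Gimbel--Thomassen) as a consequence. Your argument is more self-contained: you reduce both directions to colouring $H$ (via Theorem~\ref{thm-extend} applied in each interior $4$-face), then handle the ``if'' direction by an explicit near-bipartite colouring of $H$, and the ``only if'' direction by the classical winding-number (discrete Stokes) argument on the $\pm 1$ edge labels. Note that Theorem~\ref{thm-extend6} as stated in the paper only gives one implication (non-extension $\Rightarrow$ existence of a quadrangulation subgraph), so the ``only if'' direction---that the rainbow colouring of $K$ never extends to a quadrangulation $H$---is exactly the companion fact your $\psi$-argument supplies. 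Your observation that the $4$-face case is the unique even length where the mod-$3$ constraint on a $\pm 1$ sum forces the integer sum to vanish is the right way to pinpoint where the hypothesis is used.
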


Furthermore, we need some results on precolouring extension in planar graphs with exactly one triangle.

\begin{theorem}[Aksionov~\cite{aksenov}]\label{thm-extend43}
Let $G$ be a plane graph with the outer face bounded by a cycle $K$ of length $4$.
If $G$ contains exactly one triangle, then every $3$-colouring of $K$ extends to a $3$-colouring of $G$.
\end{theorem}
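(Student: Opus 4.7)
My plan is to argue by induction on $|V(G)|$, taking $(G,\varphi)$ to be a vertex-minimum counterexample in which $K=v_1v_2v_3v_4$ bounds the outer face, $T=xyz$ is the unique triangle of $G$, and $\varphi$ is a $3$-colouring of $K$ that does not extend. Note that neither $v_1v_3$ nor $v_2v_4$ can be an edge of $G$, since such a chord would immediately produce two triangles sharing that edge. Using Theorem~\ref{thm-extend} on triangle-free subgraphs and the inductive hypothesis on subgraphs that still contain $T$, I would first carry out the standard reductions: show that $G$ is $2$-connected, that every internal vertex has degree at least $3$, and that $G$ has no separating cycle of length at most $5$ other than possibly $T$ itself. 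In each such forbidden configuration, $G$ splits along the short separator into two strictly smaller pieces, $\varphi$ extends to each via either Theorem~\ref{thm-extend} or induction, and the two extensions glue along the separator to contradict minimality.

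Next I would branch on whether the triangle $T$ separates $G$. If it does, let $G_{\text{out}}\supseteq K$ and $G_{\text{in}}$ be the two sides: both contain $T$ but no other triangle. By the inductive hypothesis applied to $G_{\text{out}}$ (strictly smaller because $G_{\text{in}}$ has an interior vertex), $\varphi$ extends to a $3$-colouring of $G_{\text{out}}$, thereby prescribing a $3$-colouring of $V(T)$. The plane graph $G_{\text{in}}$ has $T$ as its outer face and no other triangle, so by Gr\"otzsch's theorem it admits a $3$-colouring, and permuting the colour classes makes it agree with the prescribed colouring of $V(T)$. Gluing yields an extension of $\varphi$ to $G$, contradicting the choice of $(G,\varphi)$.

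In the remaining case $T$ bounds an internal face of $G$. I would contract the edge $xy$ of $T$, producing $G'=G/xy$. Because $T$ is the only triangle, $x$ and $y$ have no common neighbour other than $z$, so $G'$ is triangle-free and planar with the same outer cycle $K$; Theorem~\ref{thm-extend} then yields a $3$-colouring $\varphi'$ of $G'$ extending $\varphi$. Let $c=\varphi'(w)$ for the contracted vertex $w$ and $c_z=\varphi'(z)\neq c$; any lift of $\varphi'$ to a colouring of $G$ must colour $\{x,y\}$ with the two colours of $\{1,2,3\}\setminus\{c_z\}=\{c,c'\}$, and every neighbour of $x$ or $y$ in $G$ distinct from $x,y,z$ has colour in $\{c',c_z\}$ (being adjacent to $w$ in $G'$). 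Hence a lift exists unless both $x$ has a neighbour in $G$ coloured $c'$ and $y$ has a neighbour in $G$ coloured $c'$.

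The main obstacle is resolving this last bad case. I would handle it by a Kempe chain exchange in $G'$: swapping colours along an appropriate bichromatic component that contains one of the bad neighbours but avoids the colour-constrained vertices of $K$ would recolour that neighbour and unblock one of the two lifts. Showing that such a swappable component always exists is the technical heart of the argument; it uses planarity of $G'$ together with the structural facts that $K$ is a chord-free $4$-cycle and that all small separators have already been ruled out by the reductions. I expect this Kempe-chain step to be the hardest part, mirroring the final step in Aksionov's original proof.
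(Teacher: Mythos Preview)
The paper does not prove this statement: Theorem~\ref{thm-extend43} is quoted as a result of Aksionov and used as a black box, with no proof supplied. There is therefore nothing in the paper to compare your proposal against, and I can only comment on the proposal itself.

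Your contraction step contains an error. The fact that $x$ and $y$ share no neighbour besides $z$ only guarantees that $G'=G/xy$ has no parallel edges; it does not make $G'$ triangle-free. A triangle $wuv$ in $G'$ corresponds to a $4$-cycle $x\,y\,v\,u$ in $G$ through the edge $xy$, and nothing in your reductions excludes this: you forbade \emph{separating} short cycles, but the second face incident with $xy$ may well have length~$4$ without separating anything. You also implicitly assume $xy\notin E(K)$, since otherwise contracting $xy$ turns the outer face into a triangle and Theorem~\ref{thm-extend} no longer applies. Both points are repairable (for instance, if $G'$ acquires exactly one triangle you can invoke the inductive hypothesis in place of Theorem~\ref{thm-extend}, and the case $xy\in E(K)$ can be handled separately), but as written the reasoning is incorrect.

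More seriously, the final Kempe-chain step is not an argument but a hope. You must show that some bichromatic swap removes the colour $c'$ from every neighbour of $x$, or from every neighbour of $y$, while leaving the prescribed colouring of $K$ intact. Since the vertices of $K$ may carry the colours $c'$ and $c_z$, and the relevant Kempe components may reach $K$, this is far from automatic; it is exactly the kind of step where naive Kempe-chain reasoning breaks down. You acknowledge this as ``the technical heart'' and gesture at planarity and the absence of short separators, but you give no mechanism for separating the chains from $K$. Without that, the proof does not go through.
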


\begin{theorem}[Dvo\v{r}\'ak et al.~\cite{trfree1}]\label{thm-extend63}
Let $G$ be a plane graph with the outer face bounded by a $6$-cycle $K$.  Suppose that $G$ contains exactly one triangle $T$
and all other cycles in $G$ have length at least $5$.  If a $3$-colouring of $K$ does not extend to a $3$-colouring
of $G$, then either $E(T)\setminus E(K)$ consists of a chord of $K$, or $T$ is vertex-disjoint from $K$ and each vertex
of $T$ has a neighbor in $K$.
\end{theorem}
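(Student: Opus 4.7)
The plan is to prove the contrapositive by induction on $|V(G)|$: assuming that the triangle $T$ is neither formed by a chord of $K$ together with two consecutive edges of $K$, nor vertex-disjoint from $K$ with every vertex of $T$ having a neighbour in $K$, I would show that every $3$-colouring $\psi$ of $K$ extends to $G$. I would take a minimal counterexample $(G,\psi)$ and proceed in two phases: standard reductions to a well-behaved graph, followed by a case analysis on the position of $T$ relative to $K$.

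For the reductions, first, any vertex of degree at most $2$ not in $K$ may be suppressed or deleted, producing a smaller counterexample; so every interior vertex has degree at least $3$. Second, I would show that $K$ can be assumed induced: a chord $e$ of $K$ either closes off a sub-disc bounded by a cycle of length at most~$3$---forcing that triangle to be $T$ and placing us in the first excluded case---or it splits the disc bounded by $K$ into two sub-discs with boundaries of lengths in $\{4,5\}$. The sub-disc not containing $T$ is triangle-free with boundary of length at most $5$, so $\psi$ extends there by Theorem~\ref{thm-extend}, and we recurse on the other sub-disc, whose outer cycle is shorter than $K$ and contains at most the triangle $T$ (so Theorem~\ref{thm-extend43} or induction applies). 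Third, the same splitting argument, using Theorem~\ref{thm-extend} on the triangle-free side, rules out any separating cycle of length at most~$5$ disjoint from $V(T)$.

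For the structural phase, I would case-split on $V(T)\cap V(K)$. If $T$ shares an edge $v_iv_{i+1}$ with $K$, then the third vertex $w$ of $T$ is interior; the local face structure around $w$ combined with the absence of short separating cycles forces a $4$- or $5$-cycle through $K$-vertices, contradicting the reductions. If $T$ shares exactly one vertex with $K$, then an analogous analysis around the two interior vertices of $T$ again produces a short separating cycle. If $T$ is vertex-disjoint from $K$ but some vertex $t\in V(T)$ has no neighbour in $K$, then because $T$ is not in the second excluded configuration we can take a shortest path from $t$ to $K$ and concatenate it with an edge of $T$ to a $T$-vertex adjacent to $K$; routing through a single edge of $K$ produces a cycle of length at most $5$ that separates $T$ from part of the interior, contradicting the earlier reductions.

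The main obstacle will be the case where $T$ touches $K$ in a single vertex or edge but is not the chord-triangle: here one must also verify that the extension really fails against every possible colouring of the interior vertices of $T$. A clean way to finish is to enumerate the (few) possible relative positions of $T$ and $K$ that survive the reductions; in each, one removes $V(T)$ from $G$, replaces the small hole by a precolouring compatible with $\psi$ on $V(K)$ via Theorem~\ref{thm-extend43} or Corollary~\ref{cor-patches} applied to a bounding $4$- or $6$-cycle, and extends the result to $G$ using Theorem~\ref{thm-extend}. This contradicts the assumption that $\psi$ does not extend and completes the induction.
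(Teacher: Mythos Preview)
This theorem is not proved in the present paper at all: it is quoted from \cite{trfree1} and used as a black box in the proofs of Corollary~\ref{cor-extend63} and Lemma~\ref{cyclelemma}. There is therefore no argument here to compare your proposal against.

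Judged on its own merits, your reduction phase is fine, and indeed simpler than you make it: since the hypothesis forbids $4$-cycles, a chord of the $6$-cycle $K$ cannot cut it into two $4$-cycles, so any chord creates a triangle, which must be $T$, and we are already in the first excluded outcome. The structural phase, however, is not a proof. In the case where $T$ shares an edge with $K$ you assert that ``the local face structure around $w$ \ldots forces a $4$- or $5$-cycle through $K$-vertices,'' but you give no mechanism; the interior vertex $w$ can perfectly well sit inside a large-girth region with all its other incident faces of length $\ge 5$, and nothing you have set up produces a short cycle. In the vertex-disjoint case your path argument tacitly assumes that some \emph{other} vertex of $T$ is adjacent to $K$, which need not hold (negating the second excluded configuration only says that at least one vertex of $T$ has no neighbour in $K$; possibly none of them do). Even when such a $t'$ exists, the cycle you build from a length-$2$ path $t\!-\!x\!-\!k$, the edge $tt'$, the edge $t'k'$, and a $K$-segment has length at least $5$ unless $k=k'$, so it does not contradict the girth hypothesis; and it passes through $V(T)$, so your earlier reduction (which only excludes short separating cycles \emph{disjoint} from $V(T)$) does not apply to it either. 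Finally, invoking Corollary~\ref{cor-patches} is off target: that result is about subgraphs all of whose interior faces are $4$-faces, whereas here there are no $4$-cycles whatsoever. The actual argument in \cite{trfree1} proceeds through a finer analysis of critical configurations and is not something one can reconstruct from the sketch you give.
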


\begin{corollary}\label{cor-extend63}
Let $G$ be a plane graph with the outer face bounded by a $6$-cycle $K$.  Suppose that $G$ contains exactly one triangle $T$
and there exists an edge $e_0\in E(T)$ such that every $4$-cycle in $G$ contains $e_0$.
Suppose that $K$ is an induced cycle, $T$ is vertex-disjoint from $K$, and at least one vertex of $T$ has no neighbor in $K$.
Then every $3$-colouring of $K$ extends to a $3$-colouring of $G$.
\end{corollary}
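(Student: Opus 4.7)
My plan is to reduce to Theorem~\ref{thm-extend6} applied to $G-e_0$. Suppose for contradiction that a $3$-colouring $\psi$ of $K$ does not extend to $G$. Write $T=xyz$ with $e_0=xy$, and let $z$ be the vertex of $T$ without a neighbor on $K$. Set $G':=G-e_0$. Since $T$ is the unique triangle of $G$ and $e_0\in E(T)$, the graph $G'$ is triangle-free; since every $4$-cycle of $G$ contains $e_0$, the graph $G'$ has no $4$-cycles; and since $T$ is vertex-disjoint from $K$, the edge $e_0$ is not on $K$, so $K$ still bounds the outer face of $G'$.

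By Theorem~\ref{thm-extend6}, the colouring $\psi$ extends to a $3$-colouring $\varphi$ of $G'$: the only obstruction the theorem permits is a subgraph containing $K$ whose non-outer faces are all $4$-faces, which cannot occur because $G'$ has no $4$-cycles at all. If some such extension satisfies $\varphi(x)\ne\varphi(y)$, then $\varphi$ is itself a $3$-colouring of $G=G'+e_0$ extending $\psi$, contradicting the standing assumption. Hence every extension of $\psi$ to $G'$ satisfies $\varphi(x)=\varphi(y)$. The core of the proof is to contradict this forcing. Fix such an extension, let $c:=\varphi(x)=\varphi(y)$, let $c''$ be the colour distinct from $c$ and $\varphi(z)$, and consider the Kempe chain $Q$ at $y$ in the bichromatic subgraph on colours $\{c,c''\}$. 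Swapping $c\leftrightarrow c''$ on $Q$ yields another valid $3$-colouring of $G'$ that extends $\psi$ provided $Q$ avoids vertices of $K$ coloured $c$ or $c''$; this new colouring further satisfies $\varphi(y)\ne\varphi(x)$ provided $x\notin Q$.

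The main obstacle is guaranteeing that $Q$ can be chosen to satisfy both conditions simultaneously. The hypotheses that $K$ is induced and that $z$ has no neighbor on $K$ enter precisely here, insulating the region around the triangle from $K$ and furnishing enough flexibility to route $Q$ away from both $x$ and the relevant portions of $K$. Should the direct Kempe-chain analysis prove too delicate, a fallback is to identify $x$ and $y$ in $G'$ to form a graph $G''$ whose new vertex $w$ is adjacent to $z$; the triangles of $G''$ correspond bijectively to the $4$-cycles of $G$ through $e_0$, and one may then proceed by induction on their number, with the base case (no $4$-cycles) handled directly by Theorem~\ref{thm-extend63}---its ``chord of $K$'' configuration is ruled out because $K$ is induced, and its ``each vertex of $T$ has a neighbor on $K$'' configuration is ruled out by the hypothesis on $z$.
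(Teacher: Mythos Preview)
Your proposal contains a genuine gap: neither the Kempe-chain route nor the ``fallback'' is actually carried to a conclusion.

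For the Kempe-chain part, you explicitly acknowledge that the obstacle is ensuring the chain $Q$ at $y$ avoids both $x$ and the vertices of $K$ coloured $c$ or $c''$, and you then assert that the hypotheses ``insulate the region around the triangle from $K$'' and ``furnish enough flexibility''---but you give no argument for this. In fact there is no reason for it to hold: nothing in the hypotheses prevents a $\{c,c''\}$-path from $y$ to $x$, nor from $y$ to $K$. The vertex $z$ is coloured $c'$, so it is irrelevant to a $\{c,c''\}$-chain; the hypothesis that $z$ has no neighbour on $K$ gives you no leverage here. (You have also silently assumed that the vertex of $T$ with no neighbour on $K$ is the one opposite $e_0$; the hypothesis only says \emph{some} vertex of $T$ lacks such a neighbour, and in the paper's analysis it is in fact one of the endpoints of $e_0$.)

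For the fallback, you correctly observe that the base case (no $4$-cycle in $G$) follows at once from Theorem~\ref{thm-extend63}, but you provide no inductive step. Identifying $x$ and $y$ produces a graph $G''$ with one triangle per $4$-cycle of $G$ through $e_0$; none of the cited results applies to a graph with several triangles, and you do not explain how to peel off a $4$-cycle to obtain a smaller instance of the corollary itself. As written, this is a proof sketch of the base case only.

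The paper's argument is structurally different: it first uses the criticality reduction ($\star$) together with Theorems~\ref{thm-extend}, \ref{thm-extend6}, \ref{thm-extend43}, and \ref{thm-extend63} to force $G$ to have \emph{exactly one} $4$-cycle $v_1v_2v_3v_4$ sharing the edge $e_0=v_1v_2$ with $T$, both bounding faces. It then identifies two \emph{opposite} vertices of that $4$-cycle (not the endpoints of $e_0$), which keeps the graph down to a single triangle and allows Theorem~\ref{thm-extend63} to be reapplied; a short planarity argument handles the residual case. If you want to salvage your approach, the key missing idea is precisely this preliminary reduction to one $4$-cycle before attempting any identification.
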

\begin{proof}
Without loss of generality, we can assume that
\begin{itemize}
\item[($\star$)] \emph{for every cycle $C$ in $G$, there exists a $3$-colouring of $C$ that does not extend to a $3$-colouring of
the subgraph of $G$ drawn in the closed disk bounded by $C$.}
\end{itemize}
Indeed, otherwise we can consider the subgraph of $G$ obtained by removing vertices and edges contained in the open disk bounded by $C$.

By Theorems~\ref{thm-extend}, \ref{thm-extend6}, \ref{thm-extend43}, and \ref{thm-extend63}, it follows that $T=v_1v_2y$ bounds a face of $G$,
and that $G$ contains exactly one $4$-cycle $v_1v_2v_3v_4$, also bounding a face.  Since $T$ is the only triangle in $G$,
the planarity implies that either $v_1yv_2v_3$ is the only path of length three between $v_1$ and $v_3$, or $v_2yv_1v_4$
is the only path of length three between $v_2$ and $v_4$; by symmetry, assume the former.

Let $G'$ be the graph obtained from $G$ by identifying $v_1$ with $v_3$ to a new vertex $v_{13}$ and suppressing the parallel edges.
Note that $T$ is the only triangle in $G'$.
If $G'$ contains a $4$-cycle $C'$, then $G$ contains a path $P$ of length $4$ between $v_1$ and $v_3$.  Let $C$ be the cycle consisting of $P$ and $v_1v_2v_3$.
Note that either $v_4$ or the face bounded by $T$ is contained in the open disk bounded by $C$.  The former is excluded by
Theorem~\ref{thm-extend6} and ($\star$).  In the latter case, the face bounded by $T$ in $G'$ is contained in the open disk bounded by $C'$,
and by Theorems~\ref{thm-extend6} and \ref{thm-extend43}, we conclude that every $3$-colouring of $K$ extends to a $3$-colouring of $G'$, and thus also to a $3$-colouring of $G$.

Hence, we can assume that $G'$ does not contain any $4$-cycles. 
Let $K=x_1x_2\ldots x_6$.
If some $3$-colouring of $K$ does not extend to a $3$-colouring of $G'$, then since $y\not\in V(K)$,
Theorem~\ref{thm-extend63} implies that $y$, $v_2$, and $v_{13}$ have neighbors in $K$, say $x_1$, $x_3$, and $x_5$, respectively.  Since not all vertices of $T$ have a neighbor in $K$ in the graph $G$,
we conclude that $v_3x_5\in E(G)$.  By planarity, we conclude that $v_2yv_1v_4$ is the only path of length three between $v_2$ and $v_4$.  Hence, we can consider the graph $G''$
obtained from $G$ by identifying $v_2$ with $v_4$.  Observe using Theorem~\ref{thm-extend63} that every $3$-colouring of $K$ extends to a $3$-colouring of $G''$, and thus also to a $3$-colouring of $G$.
\end{proof}

As we already mentioned, every planar triangle-free graph has a vertex of degree at most three.
Actually, Euler's formula implies the following stronger claim.

\begin{observation}\label{obs-three}
Let $G$ be a plane triangle-free graph with the outer face bounded by a cycle $K$ of length at most $5$.
If $G\neq K$, then $G$ has a vertex of degree at most $3$ not belonging to $K$.
\end{observation}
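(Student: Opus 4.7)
The plan is the standard Euler's-formula counting argument, very much in the spirit of the classical proof that every planar triangle-free graph has a vertex of degree at most three. First I would reduce to the case that $G$ is connected: if some component of $G$ is disjoint from $K$, then that component is itself a planar triangle-free graph and therefore has a vertex of degree at most three by the classical Euler argument, and such a vertex is automatically not in $K$. I would also record two easy observations: $|K|\ge 4$ because $G$ is triangle-free; and if $V(G)=V(K)$, then any chord of $K$ would create a triangle (as $|K|\le 5$), forcing $G=K$ and contradicting the hypothesis. Hence we may assume $G$ is connected and has at least one vertex outside $K$.

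Now suppose for contradiction that every vertex of $G$ outside $K$ has degree at least $4$, and write $n=|V(G)|$, $m=|E(G)|$, $f$ for the number of faces of $G$, and $k=|K|$. Since vertices on $K$ have degree at least $2$, summing vertex degrees gives $2m\ge 2k+4(n-k)=4n-2k$. Summing face lengths, using that the outer face has length $k$ and that every other face-boundary walk has length at least $4$ (this uses triangle-freeness together with $\delta(G)\ge 2$, which rules out boundary walks of length $1$, $2$, or $3$ in a simple graph), and then combining with Euler's formula $n-m+f=2$, yields $m\le 2n-2-k/2$. Putting these together gives $2n-k\le m\le 2n-2-k/2$, which simplifies to $k\ge 4$.

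At this stage the Euler bound alone does not deliver a contradiction, and closing this gap is the main (if mild) obstacle. The resolution is to observe that the only allowed values are $k\in\{4,5\}$, and in both cases the upper and lower bounds on $m$ force $m=2n-k$ (for $k=5$, the upper bound $m\le 2n-9/2$ combined with the integrality of $m$ tightens to $m\le 2n-5$). Hence the degree-sum inequality is tight, so each $K$-vertex attains its individual minimum of $2$: every vertex of $K$ has degree exactly $2$ in $G$, which means its two incident edges are the two edges of $K$ at that vertex, and so it has no neighbor in $V(G)\setminus V(K)$. Since $G$ is connected and $V(G)\setminus V(K)\neq\emptyset$, this is a contradiction, completing the proof.
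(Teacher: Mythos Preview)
Your proof is correct and is precisely the kind of Euler's-formula argument the paper has in mind; the paper states this result as an observation without proof, remarking only that it follows from Euler's formula. Your reductions (to the connected case, and handling $V(G)=V(K)$), the degree-sum lower bound $m\ge 2n-k$, the face-length upper bound $m\le 2n-2-k/2$, and the final tightness/connectivity contradiction are all sound. One very minor remark: to justify that every inner face has length at least $4$, you do not really need $\delta(G)\ge 2$; in a simple triangle-free connected graph containing a cycle, any closed boundary walk of length at most $3$ would force a loop, a pair of parallel edges, or a triangle, so the claim already follows from simplicity and triangle-freeness.
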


We will need another variation on the same theme.
\begin{lemma}\label{lemma-three-no2}
Let $G$ be a plane triangle-free graph with the outer face bounded by a cycle $K$ of length $5$ and let $uv$ be an
edge of $K$.  If $G\neq K$ and all vertices of $V(G)\setminus V(K)$ have degree at least three, then
$G$ has a vertex $z$ of degree three not belonging to $K$ such that $uz,vz\not\in E(G)$.
\end{lemma}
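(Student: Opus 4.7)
The plan is to argue by contradiction: suppose $G\neq K$, every vertex outside $V(K)$ has degree at least $3$, and the conclusion of the lemma fails, so every degree-$3$ vertex outside $V(K)$ is adjacent to $u$ or $v$. Write $K=uvabcu$; triangle-freeness forces $K$ to be induced, and no inside vertex can be adjacent to both $u$ and $v$ (else a triangle with $uv$ arises). Letting $d_w$ denote the number of neighbors of $w\in V(K)$ outside $V(K)$, and $n_3$ the number of degree-$3$ vertices outside $V(K)$, the hypothesis yields $n_3\le d_u+d_v$.

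First I would extract the key inequality from Euler's formula. Since each inner face has length at least $4$ and the outer face has length $5$, the standard rewriting gives
$$\sum_{w\in V(G)}(4-\deg(w))\;=\;9+\sum_{f\text{ inner}}(\ell(f)-4).$$
Splitting the left side and using $\sum_{w\in V(K)}(4-\deg(w))=10-(d_u+d_v+d_a+d_b+d_c)$ together with $\sum_{\text{inside }w}(4-\deg(w))\le n_3\le d_u+d_v$, I obtain
$$d_a+d_b+d_c\;+\;\sum_{f\text{ inner}}(\ell(f)-4)\;\le\;1. \qquad(\star)$$

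The second step is a case analysis on $(d_a,d_b,d_c)$, each time exhibiting an inner face incident to the path $v{-}a{-}b{-}c{-}u$ of $K$ whose length violates $(\star)$. If $d_a=d_b=d_c=0$, then $a,b,c$ all have degree $2$ and the unique inner face $f$ meeting them has boundary $v{-}a{-}b{-}c{-}u{-}\cdots{-}v$; the closing walk from $u$ to $v$ cannot traverse the edge $uv$ (which lies on the outer face, and its other side is a distinct inner face, else $f=K$) and so has length at least $2$, forcing $\ell(f)\ge 6$, contradicting $(\star)$. If $d_b=1$ and $d_a=d_c=0$, let $b'$ be the inside neighbor of $b$; tracking the cyclic order at $b$ shows that the two inner faces incident to $a$ and $c$ have boundaries $v{-}a{-}b{-}b'{-}\cdots{-}v$ and $u{-}c{-}b{-}b'{-}\cdots{-}u$, and since $(\star)$ forces both to be $4$-faces, both $b'v$ and $b'u$ must be edges, producing the triangle $uvb'$. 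If $d_a=1$ (or symmetrically $d_c=1$) with inside neighbor $a'$, the inner face incident to $c$ has boundary $u{-}c{-}b{-}a{-}a'{-}\cdots{-}u$ of length at least $5$, which together with $d_a=1$ again violates $(\star)$.

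The subtle step is the face-walk bookkeeping at the degree-$3$ vertex in the $d_b=1$ and $d_a=1$ cases, using the cyclic order of edges to pin down which inner face continues along which edge, plus the fact that a $4$-face in a simple plane graph is bounded by a $4$-cycle (so length-$4$ requirements become concrete adjacency requirements such as $b'v\in E(G)$). If $G$ fails to be $2$-connected the inner face boundaries are closed walks rather than cycles, but the edge-count lower bounds and the forbidden-adjacency arguments remain intact; alternatively, one may restrict attention to the block of $G$ containing $K$ and reduce to the $2$-connected situation.
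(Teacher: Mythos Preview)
Your argument is the paper's discharging proof in different packaging: both are Euler-formula counts, the paper via charges $\deg(x)-4$ and $|f|-4$ summing to $-8$ (together with the parity observation that a second odd face exists), you via the equivalent inequality $(\star)$ followed by a case split on $(d_a,d_b,d_c)$.

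Two small repairs are worth making. In Case~1, the closing walk \emph{can} traverse $uv$ when $d_u=0$, so the clause ``else $f=K$'' is not quite right; the clean dichotomy is that either the closing walk is the single edge $uv$ (forcing $d_u=d_v=0$ and hence $G=K$), or it has length at least $2$---indeed at least $3$, since a common inside neighbour of $u$ and $v$ would create a triangle, which is why the paper records $\ell(f)\ge 7$ here. Second, your fallback ``restrict to the block containing $K$'' does not work as stated, since degrees can drop in the block and a degree-$3$ vertex of $B$ need not have degree $3$ in $G$; it is cleaner to assume $G$ connected (as the paper does, handling the disconnected case separately) and run the Euler count directly on $G$, which goes through because the edges $va,ab,bc,cu$ lie on the cycle $K$ and hence are not bridges, so the short face walks in Cases~2--3 really are cycles.
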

\begin{proof}
Without loss of generality, we can assume that $G$ is connected, otherwise we can find $z$ in a component of $G$
that does not contain $K$.
Give each vertex $x$ of $G$ charge $\deg(x)-4$ and each face $f$ charge $|f|-4$.

Each face has non-negative charge and the outer face has charge $1$.  Also, $G$ has at least one other odd-length
face with charge at least $1$.  If at least one vertex of $V(K)\setminus \{u,v\}$ has degree greater than two,
then the sum of their charges is at least $-5$.
If all vertices of $V(K)\setminus \{u,v\}$ have degree $2$, then the sum of their charges is $-6$ and
the incident non-outer face has length at least $7$ and charge at least $3$.
Consequently, sum of the charges of all faces of $G$ and of the vertices
of $V(K)\setminus \{u,v\}$ is at least $-3$.

Suppose for a contradiction that all non-neighbors of $u$ and $v$ in $V(G)\setminus V(K)$
have degree at least $4$, and thus their charge is non-negative.  Since vertices of $V(G)\setminus V(K)$ have degree at least three,
each neighbor of $u$ and $v$ in $V(G)\setminus V(K)$ has charge at least $-1$.
Consequently, the sum of all charges is at least $-3+(\deg(u)-4)+(\deg(v)-4)-(\deg(u)-2)-(\deg(v)-2)=-7$.
However, by Euler's formula, the sum of all charges is $-8$, which is a contradiction.
\end{proof}
\section{Graphs with many $4$-faces}\label{sec-4faces}

Let $\varphi$ be a $(3m:m)$-colouring of a $4$-cycle $v_1v_2v_3v_4$.  The \emph{margin} of $\varphi$
is the minimum of $|\varphi(v_1)\setminus\varphi(v_3)|$, $|\varphi(v_2)\setminus\varphi(v_4)|$, and $3m-\Bigl|\bigcup_{i=1}^4\varphi(v_i)\Bigr|$.
We will show that every $(3m:m)$-colouring of a $4$-cycle can be expressed as a union of $m$ $3$-colourings.
Note that up to permutation of colours, there are three possible $3$-colourings of a $4$-cycle, and we will also
show that if the $(3m:m)$-colouring has large margin, then the union contains many copies of each of the three
possible $3$-colourings.  In the statement of the lemma, the three $3$-colourings in question assign vertices of the $4$-cycle
colours $(A_1,A_2,A_1,A_3)$, $(B_1,B_2,B_3,B_2)$, and $(C_1,C_2,C_1,C_2)$ in order.

\begin{lemma}\label{deltagood}
Let $\varphi$ be a $(3m:m)$-colouring of a $4$-cycle $v_1v_2v_3v_4$
with margin $b$.  There exist pairwise disjoint sets $A_1,A_2,A_3, B_1, B_2, B_3, C_1,C_2,C_3\subset [3m]$ such that
\begin{align*}
|A_1|&=|A_2|=|A_3|\ge b\\
|B_1|&=|B_2|=|B_3|\ge b\\
|C_1|&=|C_2|=|C_3|\ge b\\
\varphi(v_1)&=A_1\cup B_1\cup C_1\\
\varphi(v_2)&=A_2\cup B_2\cup C_2\\
\varphi(v_3)&=A_1\cup B_3\cup C_1\\
\varphi(v_4)&=A_3\cup B_2\cup C_2.
\end{align*}
\end{lemma}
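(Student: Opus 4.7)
The plan is to partition the colour palette $[3m]$ according to which of the sets $\varphi(v_1),\ldots,\varphi(v_4)$ each colour belongs to, and then read the nine required sets off that partition. For each $S\subseteq\{1,2,3,4\}$, let $R_S$ denote the set of colours $c\in[3m]$ such that for every $i\in\{1,2,3,4\}$, $c\in\varphi(v_i)$ if and only if $i\in S$. Since $\varphi$ is proper on each of the four edges of the cycle, $R_S=\emptyset$ whenever $S$ contains two cyclically consecutive indices, so only $R_\emptyset$, the four singletons $R_{\{i\}}$, and the diagonals $R_{\{1,3\}}$ and $R_{\{2,4\}}$ can be nonempty.

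The main bookkeeping step computes the sizes of these seven regions. From $|\varphi(v_1)|=|\varphi(v_3)|=m$ I read off $|R_{\{1\}}|=|R_{\{3\}}|=:\beta$; similarly $|R_{\{2\}}|=|R_{\{4\}}|=:\alpha$. Then $|R_{\{1,3\}}|=m-\beta$, $|R_{\{2,4\}}|=m-\alpha$, and $|R_\emptyset|=m-\alpha-\beta$, so in particular $\alpha+\beta\le m$. Translating the three inputs of the margin back through the $R_S$ gives $|\varphi(v_1)\setminus\varphi(v_3)|=\beta$, $|\varphi(v_2)\setminus\varphi(v_4)|=\alpha$, and $3m-\bigl|\bigcup_i\varphi(v_i)\bigr|=m-\alpha-\beta$, so $b=\min(\alpha,\beta,m-\alpha-\beta)$.

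I then define the nine sets in the essentially forced way: $A_2=R_{\{2\}}$, $A_3=R_{\{4\}}$, $B_1=R_{\{1\}}$, $B_3=R_{\{3\}}$, and $C_3=R_\emptyset$; pick $A_1\subseteq R_{\{1,3\}}$ of size $\alpha$ and $B_2\subseteq R_{\{2,4\}}$ of size $\beta$ (both choices exist because $\alpha+\beta\le m$); and set $C_1=R_{\{1,3\}}\setminus A_1$ and $C_2=R_{\{2,4\}}\setminus B_2$. Pairwise disjointness and the four union identities expressing the $\varphi(v_i)$ are immediate from the definition of the regions. The only substantive check is that the three ``$C$'' sets share a common size: the identity $|C_1|=|R_{\{1,3\}}|-\alpha=m-\alpha-\beta=|R_{\{2,4\}}|-\beta=|C_2|$ together with $|C_3|=m-\alpha-\beta$ gives $|C_1|=|C_2|=|C_3|$. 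Since each of $\alpha$, $\beta$, and $m-\alpha-\beta$ is at least $b$ by the margin formula, the three required size lower bounds all hold. The only step with any real content is therefore this automatic compatibility $|C_1|=|C_2|=|C_3|$, which reflects the fact that $\varphi$ is a $(3m{:}m)$-colouring with a uniform list size rather than an arbitrary assignment; beyond pinning down the partition into the $R_S$ correctly, I expect no genuine obstacle.
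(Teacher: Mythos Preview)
Your proof is correct and follows essentially the same approach as the paper's. The paper defines the same nine sets directly via set differences and intersections (e.g., $A_2=\varphi(v_2)\setminus\varphi(v_4)$, $C_3=[3m]\setminus\bigcup_i\varphi(v_i)$, and $A_1\cup C_1=\varphi(v_1)\cap\varphi(v_3)$), which are precisely your regions $R_{\{2\}}$, $R_\emptyset$, and $R_{\{1,3\}}$; your systematic partition into the $R_S$ is just a cleaner bookkeeping of the identical argument.
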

\begin{proof}
Let $S=\bigcup_{i=1}^4\varphi(v_i)$ and $C_3=[3m]\setminus S$; we have $|C_3|\ge b$.
Since $\varphi$ is a colouring of the $4$-cycle, the sets $\varphi(v_1)\cup \varphi(v_3)$ and $\varphi(v_2)\cup \varphi(v_4)$
are disjoint.  We will select $A_1$, $B_1$, $B_3$, and $C_1$ as pairwise disjoint subsets of $\varphi(v_1)\cup \varphi(v_3)$, and
$A_2$, $A_3$, $B_2$, and $C_2$ as pairwise disjoint subsets of $\varphi(v_2)\cup \varphi(v_4)$, ensuring that all nine sets
are pairwise disjoint.

Let $A_2=\varphi(v_2)\setminus\varphi(v_4)$ and $A_3=\varphi(v_4)\setminus \varphi(v_2)$.  Since $|\varphi(v_2)|=|\varphi(v_4)|=m$,
we have $|A_2|=|A_4|\ge b$.  Symmetrically, let $B_1=\varphi(v_1)\setminus \varphi(v_3)$ and
$B_3=\varphi(v_3)\setminus \varphi(v_1)$ and note that $|B_1|=|B_3|\ge b$.

Note that $|\varphi(v_1)\cap \varphi(v_3)|=2m-|\varphi(v_1)\cup\varphi(v_3)|=2m+|\varphi(v_2)\cup\varphi(v_4)|-|S|=3m+|A_3|-|S|=|A_3|+|C_3|$.
Hence, we can choose disjoint sets $A_1$ and $C_1$ such that $|A_1|=|A_3|$, $|C_1|=|C_3|$ and $A_1\cup C_1=\varphi(v_1)\cap \varphi(v_3)$.
Symmetrically, we can choose disjoint sets $B_2$ and $C_2$ such that $|B_2|=|B_3|$, $|C_2|=|C_3|$ and $B_2\cup C_2=\varphi(v_2)\cap \varphi(v_4)$.
It is easy to check these sets satisfy all of the requirements.
\end{proof}

\begin{corollary}\label{cor-extend}
Let $G$ be a plane triangle-free graph with the outer face bounded by a cycle $K$ of length $4$.
For every integer $m\ge 1$, every $(3m:m)$-colouring of $K$ extends to a $(3m:m)$-colouring of $G$.
\end{corollary}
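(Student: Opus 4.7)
}
The plan is to decompose the $(3m:m)$-precolouring of $K$ into $m$ ordinary $3$-colourings of $K$ (each using three distinct colours), extend each one independently to $G$ via Aksionov's theorem (Theorem~\ref{thm-extend}), and then reassemble the results into a single $(3m:m)$-colouring of $G$. The decomposition is precisely what Lemma~\ref{deltagood} supplies.

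First, I would apply Lemma~\ref{deltagood} to the given $(3m:m)$-colouring $\varphi$ of $K=v_1v_2v_3v_4$ (we do not need any lower bound on the margin here). This produces pairwise disjoint sets $A_1,A_2,A_3,B_1,B_2,B_3,C_1,C_2,C_3\subseteq[3m]$ with $|A_1|=|A_2|=|A_3|=:a$, $|B_1|=|B_2|=|B_3|=:b$, $|C_1|=|C_2|=|C_3|=:c$, whose union is all of $[3m]$ (so $a+b+c=m$), and with $\varphi(v_i)$ given by the formulas stated in that lemma.

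Next I would choose, arbitrarily, a bijection from $A_1$ to $A_2$ and from $A_1$ to $A_3$, thereby grouping the colours of $A_1\cup A_2\cup A_3$ into $a$ ordered triples $(\alpha_1,\alpha_2,\alpha_3)$. Each such triple yields a proper $3$-colouring of $K$ (namely $v_1,v_3\mapsto\alpha_1$, $v_2\mapsto\alpha_2$, $v_4\mapsto\alpha_3$) that uses three distinct colours. Symmetrically, group $B_1\cup B_2\cup B_3$ into $b$ triples $(\beta_1,\beta_2,\beta_3)$ giving $3$-colourings with $v_1\mapsto\beta_1$, $v_2,v_4\mapsto\beta_2$, $v_3\mapsto\beta_3$, and group $C_1\cup C_2\cup C_3$ into $c$ triples $(\gamma_1,\gamma_2,\gamma_3)$ giving (proper) $3$-colourings with $v_1,v_3\mapsto\gamma_1$, $v_2,v_4\mapsto\gamma_2$, and the third colour $\gamma_3\in C_3$ unused on $K$. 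The pairwise disjointness of the nine sets guarantees that these $m=a+b+c$ triples are pairwise disjoint and together partition $[3m]$, and that each vertex $v_i$ receives exactly one colour from each triple (matching $\varphi(v_i)$ element-by-element).

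Then, since $K$ has length $4\le 5$, Theorem~\ref{thm-extend} lets me extend each of these $m$ three-colourings of $K$ to a proper $3$-colouring of $G$ using the same three colours. Finally, define $\psi:V(G)\to 2^{[3m]}$ by letting $\psi(v)$ be the set of $m$ colours assigned to $v$ by the $m$ extended $3$-colourings. Each $3$-colouring contributes a properly coloured independent partition of $V(G)$, the triples use disjoint colour palettes, and on $K$ the construction reproduces $\varphi$ coordinate-wise; hence $\psi$ is a $(3m:m)$-colouring of $G$ extending $\varphi$. There is no real obstacle: the only thing to check carefully is the bookkeeping that the $m$ triples, taken together, assign exactly the multiset of colours $\varphi(v_i)$ to each $v_i$, which is immediate from the formulas in Lemma~\ref{deltagood}.
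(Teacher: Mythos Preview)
Your proposal is correct and takes essentially the same approach as the paper: decompose the precolouring via Lemma~\ref{deltagood}, extend each piece to $G$ using Theorem~\ref{thm-extend}, and reassemble. The paper's version is marginally tidier in that it extends only three $3$-colourings of $K$ (one per pattern $(1,2,1,3)$, $(1,2,3,2)$, $(1,2,1,2)$) and then substitutes the whole blocks $A_i$, $B_i$, $C_i$ for the symbols $1,2,3$, rather than first splitting the blocks into $m$ individual colour triples as you do.
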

\begin{proof}
Let $\psi$ be a $(3m:m)$-colouring of $K$, and let $A_1,\ldots, C_3$ be the sets from Lemma~\ref{deltagood}
applied to $\psi$.  Let $\psi_1$, $\psi_2$, and $\psi_3$ be the $3$-colourings of $K$ such that
\begin{align*}
(\psi_1(v_1), \psi_1(v_2), \psi_1(v_3), \psi_1(v_4))&=(1,2,1,3)\\
(\psi_2(v_1), \psi_2(v_2), \psi_3(v_3), \psi_2(v_4))&=(1,2,3,2)\\
(\psi_3(v_1), \psi_3(v_2), \psi_4(v_3), \psi_3(v_4))&=(1,2,1,2)
\end{align*}
For $i\in\{1,2,3\}$, Theorem~\ref{thm-extend} implies that $\psi_i$ extends to a $3$-colouring $\varphi_i$
of $G$.  For $u\in V(G)$, let us define
$\varphi(u)=A_{\varphi_1(u)}\cup B_{\varphi_2(u)}\cup C_{\varphi_3(u)}$.
Observe that $\varphi$ is a $(3m:m)$-colouring of $G$ that extends $\psi$.
\end{proof}

Similarly, if a $4$-cycle in a planar triangle-free graph has a $(3m:m)$-colouring with large margin, we can use
Lemma~\ref{lemma-mono} and extend the precolouring so that one vertex is assigned more than $m$ colours.

\begin{lemma}\label{deltaextend}
Suppose $G$ is a triangle-free plane graph with the outer face bounded by a $4$-cycle $K=v_1v_2v_3v_4$.
Let $\psi$ be a $(3m:m)$-colouring of $K$ and let $v$ be a vertex of $G$ of degree at most $3$ not belonging to $K$.
If $\psi$ has margin at least $b$, then $\psi$ extends to a colouring $\varphi$ of $G$ by subsets of $[3m]$ such that
$|\varphi(u)|=m$ for $u\in V(G)\setminus\{v\}$ and $|\varphi(v)|\ge m+b$.
\end{lemma}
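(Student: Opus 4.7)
The plan is to mimic the construction of Corollary~\ref{cor-extend}, but to replace one of its three boundary $3$-colouring patterns by a colouring produced by Lemma~\ref{lemma-mono}, in which $v$ is assigned two colours. The extra colour at $v$ translates, through the Lemma~\ref{deltagood} decomposition, into an additional group of at least $b$ colours in the final $(3m)$-palette assigned to $v$.

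First I would apply Lemma~\ref{deltagood} to $\psi$ to obtain nine pairwise disjoint sets $A_1,A_2,A_3,B_1,B_2,B_3,C_1,C_2,C_3\subseteq[3m]$, each of size at least $b$ (and each triple of equal size), encoding $\psi$ via the three $3$-colouring patterns $\psi_1,\psi_2,\psi_3$ of $K=v_1v_2v_3v_4$, namely $(1,2,1,3)$, $(1,2,3,2)$ and $(1,2,1,2)$. Next I would apply Lemma~\ref{lemma-mono} to the plane triangle-free graph $G$ and the low-degree vertex $v$ to obtain a colouring $\eta$ of $G$ by subsets of $[3]$ with $|\eta(v)|=2$ and $|\eta(u)|=1$ for $u\neq v$. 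The restriction $\eta|_K$ is a proper $3$-colouring of $K$, and up to a permutation of $[3]$ every proper $3$-colouring of a $4$-cycle matches exactly one of $\psi_1,\psi_2,\psi_3$ (an easy case check using that $c_1\neq c_3$ together with $c_2\neq c_4$ is not realizable in $[3]$ on a $4$-cycle); I would therefore permute the colours of $\eta$ (preserving the sizes of the sets $\eta(u)$) so that $\eta|_K=\psi_p$ for some $p\in\{1,2,3\}$, and set $\eta_p:=\eta$. For each $j\in\{1,2,3\}\setminus\{p\}$, Theorem~\ref{thm-extend} supplies an extension $\eta_j$ of $\psi_j$ to a proper $3$-colouring of $G$.

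Finally I would combine $\eta_1,\eta_2,\eta_3$ as in the proof of Corollary~\ref{cor-extend}: for each $u\in V(G)$ set
$$\varphi(u)=\bigcup_{c\in\eta_1(u)}A_c\ \cup\ \bigcup_{c\in\eta_2(u)}B_c\ \cup\ \bigcup_{c\in\eta_3(u)}C_c.$$
For $u\neq v$, each $\eta_i(u)$ is a singleton, so $|\varphi(u)|=|A_1|+|B_1|+|C_1|=m$, and $\varphi$ agrees with $\psi$ on $K$ by the Lemma~\ref{deltagood} identity. For $u=v$, the group indexed by $p$ contributes a disjoint union of two equal-size sets from its group rather than one, adding an extra term of size at least $b$ to the count and yielding $|\varphi(v)|\geq m+b$. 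Properness is immediate: the $A$-, $B$- and $C$-groups are pairwise disjoint, and within each group the sets used for $v$ and any neighbour of $v$ are disjoint because the corresponding $\eta_i$ is a proper colouring. The only point requiring care is the colour relabelling step, for which one uses that the $4$-cycle admits exactly three proper $3$-colourings up to permutation of $[3]$, matching the three patterns produced by Lemma~\ref{deltagood}; everything else is straightforward bookkeeping.
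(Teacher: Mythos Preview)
Your proposal is correct and essentially identical to the paper's proof: both apply Lemma~\ref{deltagood} to decompose $\psi$ into nine sets, use Lemma~\ref{lemma-mono} to get a set-colouring of $G$ whose restriction to $K$ matches one of the three canonical patterns (after a colour permutation), extend the other two patterns by Theorem~\ref{thm-extend}, and then combine via the formula $\varphi(u)=\bigcup_{c\in\eta_1(u)}A_c\cup\bigcup_{c\in\eta_2(u)}B_c\cup\bigcup_{c\in\eta_3(u)}C_c$. The only difference is presentational order.
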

\begin{proof}
Let $\psi_1$, $\psi_2$, and $\psi_3$ be the $3$-colourings of $K$ such that
\begin{align*}
(\psi_1(v_1), \psi_1(v_2), \psi_1(v_3), \psi_1(v_4))&=(1,2,1,3)\\
(\psi_2(v_1), \psi_2(v_2), \psi_3(v_3), \psi_2(v_4))&=(1,2,3,2)\\
(\psi_3(v_1), \psi_3(v_2), \psi_4(v_3), \psi_3(v_4))&=(1,2,1,2)
\end{align*}
Note that any $3$-colouring of $K$ can be obtained from one of these colourings by a permutation of colours.
Hence, using Lemma~\ref{lemma-mono} and Theorem~\ref{thm-extend}, we conclude that there exist colourings $\varphi_1$, $\varphi_2$, and $\varphi_3$
of $G$ by subsets of $[3]$ that extend $\psi_1$, $\psi_2$, and $\psi_3$, respectively, such that $|\varphi_i(u)|=1$ for $u\in V(G)\setminus\{v\}$ and $i\in \{1,2,3\}$,
$|\varphi_a(v)|=2$ for some $a\in\{1,2,3\}$, and $|\varphi_i(v)|=1$ for $i\in \{1,2,3\}\setminus\{a\}$.

Let $A_1$, \ldots, $C_3$ be the sets obtained by applying Lemma~\ref{deltagood} to $\psi$.  For $u\in V(G)$,
let us define
$$\varphi(u)=\bigcup_{c\in\varphi_1(u)}A_c\cup \bigcup_{c\in\varphi_2(u)}B_c\cup \bigcup_{c\in\varphi_3(u)}C_c.$$
Observe that $\varphi$ has the required properties.
\end{proof}

Next, we show that a $(3m:a)$-colouring with $a>m$ can be transformed into a $(3m:m)$-colouring that has some
margin on boundaries of $4$-faces.
A graph is \emph{$1$-planar} if it can be drawn in plane so that crossings between edges are allowed,
but each edge is crossed at most once.  

\begin{lemma}\label{deltagoodreduction}
Suppose $G$ is a triangle-free plane graph with a $(3m:m+8b)$-colouring $\varphi'$
for a non-negative integer $b$.  Then $G$ has a $(3m:m)$-colouring $\varphi$
such that $\varphi(v)\subseteq \varphi'(v)$ for all $v\in V(G)$, and for every $4$-face $f$ of $G$,
the restriction of $\varphi$ to the boundary cycle of $f$ has margin at least $b$.
\end{lemma}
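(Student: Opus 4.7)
The plan is to choose, for each vertex $v$, a subset $\varphi(v)\subseteq\varphi'(v)$ of size $m$, equivalently a removed set $R(v)=\varphi'(v)\setminus\varphi(v)$ of size $8b$. My first step is to reformulate the margin requirement at a single $4$-face $v_1v_2v_3v_4$: since $\varphi'$, and hence $\varphi$, is a proper colouring of $G$, opposite vertex colour sets are the only possible source of overlap, and a short calculation shows that margin at least $b$ is equivalent to the three conditions $|\varphi(v_1)\cap\varphi(v_3)|\le m-b$, $|\varphi(v_2)\cap\varphi(v_4)|\le m-b$, and $|\varphi(v_1)\cap\varphi(v_3)|+|\varphi(v_2)\cap\varphi(v_4)|\ge m+b$. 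Thus the problem becomes one of controlling how the removed sets $R(v_i)$ cut into the initial opposite overlaps $\varphi'(v_i)\cap\varphi'(v_j)$ at each $4$-face.

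To handle all $4$-faces simultaneously, I form the auxiliary graph $G^*$ on $V(G)$ whose edges are those of $G$ together with both diagonals inside each $4$-face of $G$. Since the two diagonals of a single $4$-face can be drawn so they cross only each other inside that face, $G^*$ is $1$-planar, and hence $|E(G^*)|\le 4|V(G)|-8$. This forces some vertex $v$ to have degree at most $7$ in $G^*$, which (subtracting its degree in $G$) means $v$ is opposite to at most $7$ partners across all incident $4$-faces. I then proceed by induction on $|V(G)|$: applying the hypothesis to $G-v$ with $\varphi'$ restricted to $V(G)\setminus\{v\}$ gives a colouring $\varphi$ on $V(G)\setminus\{v\}$ meeting the margin condition at every $4$-face of $G$ not incident to $v$, and it remains to choose $R(v)$ of size $8b$ to satisfy the constraints imposed by the $4$-faces at $v$.

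The main obstacle will be the coupling sum bound: for a $4$-face $vw_1uw_2$ incident to $v$, the condition $|\varphi(v)\cap\varphi(u)|+|\varphi(w_1)\cap\varphi(w_2)|\ge m+b$ refers to the opposite diagonal $w_1w_2$, which typically is not an opposite pair in any $4$-face of $G-v$, so the inductive hypothesis provides no lower bound on $|\varphi(w_1)\cap\varphi(w_2)|$. To overcome this I expect to strengthen the inductive statement to also track, at $4$-faces adjacent to the already-coloured part, sufficient slack on those intersections that will later need to participate in a sum bound when their missing vertex is coloured. Once that bookkeeping is in place, the $1$-planarity of $G^*$ caps the number of active constraints at $v$ by $7$, and each of them can be met by removing roughly $b$ carefully chosen colours from $\varphi'(v)$, fitting comfortably within the available budget of $8b$ and yielding the required $(3m:m)$-colouring $\varphi$.
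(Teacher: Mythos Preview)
Your setup matches the paper's: form $G^*$ by adding both diagonals of every $4$-face, observe that $G^*$ is $1$-planar and hence $7$-degenerate, and process vertices along a degeneracy ordering. Your reformulation of the margin at a $4$-face $v_1v_2v_3v_4$ as the three conditions on $|\varphi(v_1)\cap\varphi(v_3)|$, $|\varphi(v_2)\cap\varphi(v_4)|$, and their sum is also correct.

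The genuine gap is exactly the obstacle you name but do not resolve. The coupling lower bound $|\varphi(v)\cap\varphi(u)|+|\varphi(w_1)\cap\varphi(w_2)|\ge m+b$ involves the diagonal $w_1w_2$, about which the induction on $G-v$ says nothing. You promise to ``strengthen the inductive statement'' but never specify the invariant, and a naive count shows there is no slack: from disjointness of $\varphi'(v_1)\cup\varphi'(v_3)$ and $\varphi'(v_2)\cup\varphi'(v_4)$ inside $[3m]$ one gets $|\varphi'(v_1)\cap\varphi'(v_3)|+|\varphi'(v_2)\cap\varphi'(v_4)|\ge m+32b$, but removing $8b$ colours from each of the four sets can drop this sum by $32b$, leaving only $\ge m$ rather than $\ge m+b$. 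Any invariant strong enough to recover the missing $b$ must refer to diagonals of $4$-faces of $G$ that are no longer $4$-faces of $G-v$, i.e.\ to data external to the smaller instance, so the black-box induction on $G-v$ cannot carry it.

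The paper avoids this coupling entirely by decoupling the three margin conditions. It fixes once and for all an arbitrary $s_0\subset[3m]$ of size $b$ and requires every $\varphi(v)$ to avoid $s_0$; this alone forces $\bigl|\bigcup_i\varphi(v_i)\bigr|\le 3m-b$ at every $4$-face, dispatching the third condition globally with no lower bound on intersections needed. For the two difference conditions it walks along a $7$-degeneracy ordering of $G^*$, at each $v_i$ reserving a ``signature'' $s_i\subset\varphi'(v_i)$ of size $b$ disjoint from $s_0$ and from the at most seven earlier $s_j$ with $v_jv_i\in E(G^*)$, and then takes $\varphi(v_i)$ to be any $m$-subset of what remains that contains $s_i$. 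For a diagonal $u_1u_3$, the earlier vertex's signature witnesses $|\varphi(u_1)\setminus\varphi(u_3)|\ge b$ directly. The budget of $8b$ is exactly $b$ for $s_0$ plus $7b$ for the earlier signatures.
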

\begin{proof}
Let $G'$ be the graph obtained from $G$ by adding both chords joining opposite vertices of every $4$-face of $G$.
Note that $G'$ is $1$-planar and hence $7$-degenerate~\cite{fabrici2007structure}, so we may label the vertices of $G'$
as $v_1,\ldots,v_n$ so that for every $i\in\{1,\ldots,n\}$, $v_i$ has at most seven neighbours in $\{v_1,\ldots,v_{i-1}\}$. 

Let $s_0$ be an arbitrary subset of $[3m]$ of size $b$.
For $i=1,\ldots, n$ in order, let $s_i$ be an arbitrary subset of size $b$ of the set
$$S_i\colonequals \varphi'(v_i)\setminus\Bigl(s_0\cup\bigcup_{j<i,v_jv_i\in E(G')} s_j\Bigr).$$
By the choice of the ordering of the vertices, we have $|S_i|\ge |\varphi'(v_i)|-8b\ge m$ for $i\in\{1,\ldots,n\}$.
Choose $\varphi(v_i)$ as an arbitrary subset of $S_i$ of size $m$ that contains $s_i$ as a subset.

Let $u_1u_2u_3u_4$ be a $4$-cycle bounding a face $f$ of $G$.  Note that $\bigcup_{j=1}^4 \varphi(u_j)\subseteq [3m]\setminus s_0$,
and thus $\Bigl|\bigcup_{j=1}^4 \varphi(u_j)\Bigr|\le 3m-b$.  Furthermore, let $i_1$ and $i_3$ be indices such that
$u_1=v_{i_1}$ and $u_3=v_{i_3}$, where by symmetry we can assume that $i_1<i_3$.  Since $u_1u_3$ is an edge of $G'$, we
have $s_{i_1}\subseteq \varphi(u_1)\setminus \varphi(u_3)$,
and thus $|\varphi(u_1)\setminus \varphi(u_3)|\ge b$.  Symmetrically, we have $|\varphi(u_2)\setminus \varphi(u_4)|\ge b$.
Consequently, the restriction of $\varphi$ to the boundary cycle of $f$ has margin at least $b$.
\end{proof}

Combining these results, we have the following.

\begin{lemma}\label{indepset}
Suppose	$G$ is a plane triangle-free graph and $H$ is a subgraph of $G$,
and let $t$ be the number of vertices of $H$ such that not all incident faces of $H$ are $4$-faces.
If $H$ has a $(3m:m+8b)$-colouring $\psi$ for a non-negative integer $b$,
then $G$ has an independent set of size at least
$$\frac{|G|+\varepsilon|H|-3t}{3},$$
where $\varepsilon=\frac{b}{4m}$.
\end{lemma}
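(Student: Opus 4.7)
The plan is to construct a colouring $\varphi$ of a subgraph $H_4^+\supseteq H$ of $G$ that preserves as much of $\psi$ as possible, apply Lemma~\ref{lemma-indset} to extract a large independent set of $H_4^+$, and combine it with an independent set of $G-V(H_4^+)$ via Observation~\ref{independenceadd}. To set this up, first invoke Lemma~\ref{deltagoodreduction} on $\psi$ to obtain a $(3m:m)$-colouring $\varphi_0$ of $H$ with $\varphi_0(v)\subseteq\psi(v)$ for every $v\in V(H)$ and margin at least $b$ on every $4$-face of $H$. Let $H_4^+$ consist of $H$ together with all vertices and edges of $G$ drawn inside the full $4$-faces of $H$, and let $B_4\subseteq V(H)$ be the set of vertices on the boundary of some full $4$-face of $H$. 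For each full $4$-face $f$, let $G_f$ denote the subgraph of $G$ drawn in the closed disk bounded by $\partial f$; by Observation~\ref{obs-three}, $G_f$ has a vertex of degree at most $3$ not on $\partial f$, so Lemma~\ref{deltaextend} extends $\varphi_0|_{\partial f}$ to a colouring of $G_f$ by subsets of $[3m]$ in which one interior vertex of $G_f$ receives at least $m+b$ colours and every other interior vertex receives exactly $m$.

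Now define $\varphi$ on $V(H_4^+)$: put $\varphi(v)=\psi(v)$ for $v\in V(H)\setminus B_4$, $\varphi(v)=\varphi_0(v)$ for $v\in B_4$, and $\varphi(v)$ equal to the Lemma~\ref{deltaextend} extension for $v\in V(H_4^+)\setminus V(H)$. This is a valid colouring by subsets of $[3m]$: adjacent vertices in $V(H)$ receive disjoint sets because their $\varphi$-values lie inside the corresponding $\psi$-values, which are pairwise disjoint on adjacent vertices; each extension is itself a valid colouring; and at an interface between $B_4$ and an interior vertex of a full $4$-face $f$, the extension was built to agree with $\varphi_0$ on $\partial f$. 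Adding up the sizes of the colour classes yields
$$\sum_{v\in V(H_4^+)}|\varphi(v)|=m|H_4^+|+8b(|H|-|B_4|)+bq,$$
where $q$ is the number of full $4$-faces of $H$, and therefore Lemma~\ref{lemma-indset} gives
$$\alpha(H_4^+)\ge\frac{|H_4^+|}{3}+\frac{8b(|H|-|B_4|)+bq}{3m}.$$

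The full faces of $H_4^+$ with respect to $G$ are exactly the full non-$4$-faces of $H$, and every vertex incident with such a face belongs to $V(H)$ and is incident with a non-$4$-face of $H$; hence at most $t$ such vertices exist. Applying Observation~\ref{independenceadd} to $H_4^+\subseteq G$ and bounding $\alpha(G-V(H_4^+))\ge|G-V(H_4^+)|/3$ via Corollary~\ref{cor-add1} yields
$$\alpha(G)\ge\frac{|G|}{3}+\frac{8b(|H|-|B_4|)+bq}{3m}-t.$$

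The main obstacle is now a book-keeping inequality. Since $\varepsilon=b/(4m)$, matching the target bound $\frac{|G|+\varepsilon|H|-3t}{3}$ amounts to proving $8(|H|-|B_4|)+q\ge|H|/4$: the bonus $bq$ gained from Lemma~\ref{deltaextend} must compensate for the loss $8b|B_4|$ incurred by reducing $\psi$ to $\varphi_0$ on $B_4$. This follows from the double-counting estimate $4q\ge|B_4|$, since each vertex of $B_4$ lies on at least one full $4$-face and each full $4$-face has exactly four boundary vertices. Substituting, $8(|H|-|B_4|)+q\ge 8(|H|-|B_4|)+|B_4|/4=8|H|-\tfrac{31}{4}|B_4|\ge|H|/4$, using $|B_4|\le|H|$.
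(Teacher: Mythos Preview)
Your proof is correct and follows essentially the same approach as the paper's own proof: reduce $\psi$ to a margin-$b$ colouring via Lemma~\ref{deltagoodreduction}, keep the full $\psi$-values off the boundaries of full $4$-faces, extend into each full $4$-face via Lemma~\ref{deltaextend} to gain $b$ on one interior vertex, and then balance the gain $bq$ against the loss on $B_4$ to get the $\varepsilon|H|$ term before applying Observation~\ref{independenceadd}. The only cosmetic differences are that the paper bounds the number of boundary vertices crudely by $4c$ (your $4q$) rather than tracking $|B_4|$ exactly, and writes the key inequality as $\max(c,\,8|H|-31c)\ge |H|/4$; your formulation with $|B_4|$ is equivalent and slightly tighter. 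One tiny slip: your displayed sum $\sum_{v}|\varphi(v)| = m|H_4^+| + 8b(|H|-|B_4|) + bq$ should be an inequality ``$\ge$'', since Lemma~\ref{deltaextend} only guarantees $|\varphi(z)|\ge m+b$ for the special interior vertex.
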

\begin{proof}
Let $c$ be the number of full $4$-faces of $H$, and let $H'$ be the subgraph of $G$ obtained from $H$ by
adding all vertices and edges of $G$ drawn in the full $4$-faces of $H$.

By Lemma~\ref{deltagoodreduction}, there exists a $(3m:m)$-colouring $\varphi_0$ of $H$ whose restriction
to the boundary cycle of any $4$-face of $H$ has margin at least $b$, such that $\varphi_0(v)\subseteq \psi(v)$
for all $v\in V(H)$.  Let $\varphi_1$ be a set colouring such that $\varphi_1(v)=\varphi_0(v)$ for each $v\in V(H)$
incident with a full $4$-face of $H$, and $\varphi_1(v)=\psi(v)$ for all other vertices $v\in V(H)$.
By Corollary~\ref{cor-extend} and Lemma~\ref{deltaextend}, we can extend $\varphi_1$ to a colouring $\varphi$ of $H'$
by subsets of $[3m]$ such that all vertices are assigned sets of size at least $m$, vertices of $H$ not incident
with full $4$-faces are assigned sets of size $m+8b$, and $c$ vertices in $V(H')\setminus V(H)$ are assigned sets of size at least $m+b$.
By Lemma~\ref{lemma-indset}, we have
\begin{align*}
\alpha(H')&\ge \frac{|H'|m+bc+8b\max(0,|H|-4c)}{3m}\\
&=\frac{|H'|m+b\max(c,8|H|-31c)}{3m}\\
&\ge \frac{|H'|m+b|H|/4}{3m}=\frac{|H'|+\varepsilon|H|}{3}.
\end{align*}
Since every planar triangle-free graph is $3$-colourable, we have $\alpha(G-V(H'))\ge \frac{|G|-|H'|}{3}$.
Hence, Observation~\ref{independenceadd} implies
$$\alpha(G)\ge \frac{|G|+\varepsilon|H|}{3}-t.$$
\end{proof}

To apply this lemma, we need to find a $(3m:a)$-colouring with $a>m$ in a given graph.
This is possible based on the following result.
\begin{theorem}[Dvo\v{r}\'ak et al.~\cite{frpltr}]\label{thm-frpltr}
Let $n\ge 1$ be an integer.
If $G$ is a planar triangle-free graph with at most $n$ vertices, then $G$ has a $(9n : 3n + 1)$-colouring.
\end{theorem}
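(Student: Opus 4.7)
The plan is to exploit the fact that $3n$ independent Gr\"otzsch $3$-colourings of $G$ already yield a $(9n:3n)$-colouring, so a $(9n:3n+1)$-colouring requires exactly one extra colour per vertex beyond this, mirroring the ``$+1$'' in Corollary~\ref{cor-add1}. The gap $9n/(3n) - 9n/(3n+1)$ is quantitatively matched by the $1/3$ improvement in the independence bound $\alpha(G)\ge(|G|+1)/3$, so Corollary~\ref{cor-add1} is the natural source of the extra colour.

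I would proceed by strong induction on $|G|$, fixing $n$ throughout so the palette $[9n]$ is common to all steps. The base case $|G|=0$ is trivial. For the inductive step, apply Corollary~\ref{cor-add1} to find an independent set $I\subseteq V(G)$ with $|I|\ge(|G|+1)/3$, apply the inductive hypothesis to $G-I$ (still planar and triangle-free with $|G-I|\le n$) to obtain a $(9n:3n+1)$-colouring $\psi$ of $G-I$ using colours from $[9n]$, and then extend $\psi$ to the vertices of $I$.

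The main obstacle is this extension step: a vertex $v\in I$ may have neighbours in $G-I$ whose colour sets collectively cover much of $[9n]$, leaving fewer than $3n+1$ free colours at $v$. In general the union of $d$ colour sets of size $3n+1$ can reach $9n$ well before $d$ grows large, so a blind induction does not close. To overcome this, the inductive hypothesis would need strengthening so that $\psi$ has a controlled form compatible with extension --- for example, requiring $\psi$ to arise from $3n+1$ almost-balanced independent sets of $G$ (rather than of $G-I$), each of size at least $(|G|+1)/3$ as guaranteed by Corollary~\ref{cor-add1}. Constructing such a family amounts to iterating Corollary~\ref{cor-add1} across $3n+1$ rounds with balanced coverage, and I expect the main technical step to be calibrating the per-round gain so that every vertex is covered in exactly $3n+1$ sets, rather than some vertices being over- or under-covered. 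An alternative route would be to prove a weighted analogue of Corollary~\ref{cor-add1} --- namely, that every non-negative weighting $w:V(G)\to\mathbb{R}_{\ge 0}$ admits an independent set of weight at least $(3n+1)w(V)/(9n)$ --- from which the desired $(9n:3n+1)$-colouring would follow by LP-duality-style reasoning, modulo some care for scaling (a fractional chromatic bound does not by itself yield the exact integer ratio $(9n:3n+1)$). Either way, calibrating the ``$+1$'' of Corollary~\ref{cor-add1} against the target improvement $3/(3n+1)$ over Gr\"otzsch's theorem is the delicate part of the argument.
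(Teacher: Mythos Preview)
This theorem is quoted from an external reference (Dvo\v{r}\'ak et al.~\cite{frpltr}); the present paper supplies no proof and uses the result only as a black box en route to Corollary~\ref{cor-frpltr}. There is therefore no in-paper proof to compare against.

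Your proposal is an honest outline of two strategies together with their obstructions, but it is not a proof. You correctly observe that the naive induction---remove a large independent set $I$ via Corollary~\ref{cor-add1}, colour $G-I$ by induction with the same $n$, extend greedily to $I$---fails because a vertex of $I$ may see too many colours on its neighbours. You also correctly note that the weighted analogue of Corollary~\ref{cor-add1} is, by LP duality, exactly the bound $\chi_f(G)\le 9n/(3n+1)$, and that a fractional-chromatic bound does not by itself produce a $(9n:3n+1)$-colouring, only some $(9nk:(3n+1)k)$-colouring for an uncontrolled $k$. Both diagnoses are accurate, and neither gap is closed in what you wrote. The first alternative you float (iterate Corollary~\ref{cor-add1} across $3n+1$ rounds with ``balanced coverage'') is not obviously well-posed: Corollary~\ref{cor-add1} hands you one independent set of size at least $(|G|+1)/3$, with no control over which vertices it contains, so arranging that after $9n$ rounds every vertex is hit exactly $3n+1$ times is the entire difficulty, not a calibration detail.

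Your instinct that a strengthened inductive hypothesis (or an equivalent device) is required is sound. Identifying that a strengthening is needed is the easy part; designing one that is simultaneously strong enough to imply the exact ratio $(9n:3n+1)$ and weak enough to be preserved under the inductive reduction is the real content of the theorem, and that is precisely what your proposal leaves open.
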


\begin{corollary}\label{cor-frpltr}
Let $t\ge 1$ be an integer.
If $G$ is a plane triangle-free graph and $G$ contains at most $t$ vertices such that not all incident faces of $G$ are $4$-faces,
then $G$ has a $(9t : 3t + 1)$-colouring.
\end{corollary}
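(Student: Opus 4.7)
The plan is to apply Theorem~\ref{thm-frpltr} to the induced subgraph on the ``special'' vertices of $G$ and then to extend the resulting set-colouring across the surrounding $4$-face regions by exploiting their bipartite structure. Concretely, let $T\subseteq V(G)$ be the set of vertices of $G$ incident with at least one non-$4$-face; by hypothesis $|T|\le t$. The induced subgraph $H=G[T]$ is plane triangle-free with $|V(H)|\le t$, so Theorem~\ref{thm-frpltr} (with $n=t$) supplies a $(9t:3t+1)$-colouring $\psi$ of $H$. The remaining task is to extend $\psi$ to all of $G$.

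For each face $f$ of $H$ in its inherited plane embedding, let $G_f$ denote the subgraph of $G$ drawn in the closure of $f$. Since every vertex of $G_f$ strictly inside $f$ lies outside $T$, every inner face of $G_f$ is a face of $G$ incident with a non-$T$ vertex (otherwise it would be a face of $H$ distinct from $f$), and is therefore a $4$-face of $G$ by the definition of $T$. A standard face-counting via Euler's formula applied inside any cycle of $G_f$ then shows that every cycle of $G_f$ has even length (it equals $4k-2E_i$ for $k$ enclosed $4$-faces and $E_i$ enclosed interior edges), so $G_f$ is bipartite. In particular the boundary walk $\partial f$ has even length and admits a bipartition $(A_f,B_f)$ of $V(G_f)$ that is already determined on $\partial f$ by $\psi$.

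Given this structure I would extend $\psi$ into each $G_f$ by peeling $4$-faces one at a time off the already-coloured region. At each step one locates a $4$-face $F$ of $G_f$ sharing a boundary edge with the current coloured part, so that $F$ has either one or two uncoloured vertices, all lying in the appropriate parts of $(A_f,B_f)$. The colouring is then extended across $F$ via Corollary~\ref{cor-extend} applied locally, after decomposing the set-colouring of $\partial F$ into three constituent $3$-colourings by Lemma~\ref{deltagood} and extending each $3$-colouring using Theorem~\ref{thm-extend}. Since $G_f$ is bipartite, no conflict between two simultaneously coloured interior vertices can arise, and iterating eventually colours all of $G_f$.

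The main obstacle I anticipate is that a naive iteration of Corollary~\ref{cor-extend} only delivers $3t$ colours per interior vertex, losing the ``$+1$'' that distinguishes a $(9t:3t+1)$-colouring from a $(9t:3t)$-colouring. To recover the extra colour, my plan is to exploit the bipartiteness of $G_f$ by reserving a pool of supplementary colours for each of $A_f,B_f$, disjoint from each other and compatible with $\psi$ on $\partial f$, and to give each interior vertex one extra colour from the appropriate pool. Making these pools globally compatible across every face of $H$ is the crux of the argument; I expect to address it by first applying Lemma~\ref{deltagoodreduction} to adjust $\psi$ so that it has margin on every $4$-face incident with $\partial f$, and then ``spending'' the margin through Lemma~\ref{deltaextend} to provide the needed extra colour at each interior vertex.
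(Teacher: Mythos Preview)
Your plan has a genuine gap at exactly the point you flagged: recovering the ``$+1$''.  The margin machinery you propose cannot supply it.  Lemma~\ref{deltagoodreduction} requires as input a $(3m:m+8b)$-colouring with $b\ge 1$; with $3m=9t$ that means you would need a $(9t:3t+8)$-colouring of $H$ to get even margin $1$, whereas Theorem~\ref{thm-frpltr} only gives you $(9t:3t+1)$.  And Lemma~\ref{deltaextend} yields $m+b$ colours at a \emph{single} designated interior vertex, not at every interior vertex, so iterating it would not help either.  More fundamentally, the extension you want need not exist at all: take a $4$-face $v_1v_2v_3v_4$ of $G$ with $v_1,v_2,v_3\in T$ and $v_4\notin T$; since $v_1$ and $v_3$ are non-adjacent in $H$, nothing prevents $\psi(v_1)$ and $\psi(v_3)$ from being disjoint, in which case $|\psi(v_1)\cup\psi(v_3)|=6t+2$ and there is no room in $[9t]$ for a set of size $3t+1$ disjoint from both.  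Your ``pool'' idea fails for the same reason: two $B_f$-vertices on $\partial f$ can together cover $6t+2$ colours, leaving only $3t-2$ colours for any $A_f$-vertex adjacent to both.

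The paper's proof sidesteps all of this with a one-line induction.  If $G$ has a $4$-face $v_1v_2v_3v_4$, then planarity and triangle-freeness forbid paths of length three between both diagonal pairs, so (say) identifying $v_1$ with $v_3$ and suppressing the two resulting bigons yields a triangle-free plane graph $G'$ with one fewer vertex and no new non-$4$-faces; by induction $G'$ has a $(9t:3t+1)$-colouring, which pulls back to $G$ by giving $v_1$ and $v_3$ the same set.  The base case is $|G|\le t$, where Theorem~\ref{thm-frpltr} applies directly.  This folding trick is what makes the ``$+1$'' come for free: it is inherited verbatim from the smaller graph rather than rebuilt by extension.
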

\begin{proof}
We prove the claim by the induction on the number of vertices of $G$.  If $G$ has no $4$-faces, then $|G|\le t$ and the claim
follows from Theorem~\ref{thm-frpltr}.  Otherwise, let $v_1v_2v_3v_4$ be a $4$-face of $G$.  Since $G$ is triangle-free and plane,
it cannot contain both a path of length three between $v_1$ and $v_3$, and a path of length three between $v_2$ and $v_4$.
By symmetry, assume that $G$ does not contain a path of length three between $v_1$ and $v_3$, and thus the graph $G'$ obtained
from $G$ by identifying $v_1$ with $v_3$ and suppressing the bigon faces (but keeping other possible parallel edges if they arise,
so that no new faces are created)
is triangle-free.  By the induction hypothesis, $G'$ has a $(9t : 3t + 1)$-colouring, and thus $G$ also has a $(9t : 3t + 1)$-colouring
obtained by assigning $v_1$ and $v_3$ the same set of colours.
\end{proof}

This enables us to obtain a large independent set if we are given a subgraph in that almost all faces have length $4$.

\begin{lemma}\label{lemma-sizeind}
Suppose	$G$ is a plane triangle-free graph and $H$ is a subgraph of $G$,
and let $t$ be the number of vertices of $H$ such that not all incident faces of $H$ are $4$-faces.
Then $G$ has an independent set of size at least
$$\frac{|G|+|H|/(96t)-3t}{3}$$ if $t>1$ and of size at least
$$\frac{|G|+|H|/64}{3}$$ if $t=0$.
\end{lemma}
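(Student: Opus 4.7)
The lemma is essentially a packaging result: it combines Lemma~\ref{indepset} with a $(3m:m+8b)$-colouring of $H$ obtained from previous sections. The two cases differ only in how such a colouring is produced.

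For $t\ge 1$ (in particular for $t>1$), the plan is to apply Corollary~\ref{cor-frpltr} directly to $H$: since $H$ is a subgraph of the plane triangle-free graph $G$, it is itself plane and triangle-free, and by assumption at most $t$ of its vertices are incident with a non-$4$-face of $H$. Corollary~\ref{cor-frpltr} then produces a $(9t:3t+1)$-colouring $\psi_0$ of $H$. Lemma~\ref{indepset} requires $b$ to be a nonnegative integer, so I would amplify $\psi_0$ by a factor of $8$: replace each colour $c\in[9t]$ by a block of $8$ fresh colours, obtaining a $(72t:24t+8)$-colouring $\psi$ of $H$. Now Lemma~\ref{indepset} applies with $m=24t$ and $b=1$, yielding $\varepsilon=b/(4m)=1/(96t)$ and the bound $\alpha(G)\ge(|G|+|H|/(96t)-3t)/3$.

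For $t=0$, every vertex of $H$ is incident only to $4$-faces, so every face of $H$ is a $4$-face and each connected component of $H$ is a planar quadrangulation; in particular $H$ is bipartite. Fixing a $2$-colouring with classes $A$ and $B$, define $\psi(v)=\{1,\ldots,24\}$ for $v\in A$ and $\psi(v)=\{25,\ldots,48\}$ for $v\in B$. This is a valid $(48:24)$-colouring of $H$. Applying Lemma~\ref{indepset} with $m=16$ and $b=1$ gives $\varepsilon=1/64$ and the conclusion $\alpha(G)\ge(|G|+|H|/64)/3$.

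The proof has no genuinely hard step: the only point requiring mild care is that Lemma~\ref{indepset} demands an integer margin parameter $b\ge 1$, which is why the naive $(9t:3t+1)$-colouring is scaled up by a factor of $8$ before feeding it into Lemma~\ref{indepset}, and why in the bipartite case we use $24$ colours per class rather than $2$. Everything else is bookkeeping: verifying triangle-freeness of $H$ as a subgraph of $G$, checking that bipartiteness of a quadrangulation really does give a proper $(48:24)$-colouring (adjacent vertices lie in different parts, hence receive disjoint sets), and plugging $m,b$ into the formula $\varepsilon=b/(4m)$ from Lemma~\ref{indepset}.
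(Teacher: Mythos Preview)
Your proposal is correct and follows essentially the same approach as the paper: apply Corollary~\ref{cor-frpltr} to get a $(9t:3t+1)$-colouring, scale by $8$ to make $b=1$ an integer so Lemma~\ref{indepset} applies with $m=24t$, and in the $t=0$ case use bipartiteness of $H$ to produce a $(48:24)$-colouring and invoke Lemma~\ref{indepset} with $m=16$, $b=1$. One tiny wording quibble: Lemma~\ref{indepset} actually allows any non-negative integer $b$, so the need for the factor-$8$ blow-up is to make $b$ an integer at all (since $8b=1$ otherwise), not to ensure $b\ge 1$; but your construction is exactly right.
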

\begin{proof}
If $t>0$, then by Corollary~\ref{cor-frpltr}, $H$ has a $(9t : 3t + 1)$-colouring.  By replacing each colour with $8$ new colours,
we can obtain a $(72t:24t+8)$-colouring of $H$.  We apply Lemma~\ref{indepset} with $m=24t$ and $b=1$,
obtaining an independent set of size at least
$$\frac{|G|+|H|/(96t)-3t}{3}.$$
If $t=0$, then $H$ is bipartite, and thus $H$ has a $(48:24)$-colouring.  We apply Lemma~\ref{indepset} with $m=16$ and $b=1$,
obtaining an independent set of size at least
$$\frac{|G|+|H|/64}{3}.$$
\end{proof}

\section{Thomas-Walls graphs}\label{sec-tw}

In this section, we deal with the case that the considered graph contains a large reduced Thomas-Walls graph
as a subgraph, but many of its $5$-faces are full.  Also, we will deal with a derived class of subgraphs, so called
``patched Thomas-Walls graphs''.

Let us start with an observation regarding $3$-colourings of reduced Thomas-Walls graphs.
In any $3$-colouring of a $5$-cycle $K$, there is a unique vertex whose colour appears on $K$ exactly once;
we say that this vertex is the \emph{pivot} of the $3$-colouring on $K$.  Any two $3$-colourings of $K$ with the same
pivot differ only by a permutation of colours.

\begin{lemma}\label{lemma-coltw}
A reduced Thomas-Walls graph $H$ has four $3$-colourings $\varphi_1$, \ldots, $\varphi_4$ such that for each face
of $H$ bounded by a $5$-cycle $K$, there is only one vertex that is not a pivot of $K$ in any of them.
Moreover, for every vertex $u\in V(H)$ of degree three and any pair of its neighbors,
there exists $i\in \{1,\ldots,4\}$ such that the vertices of the pair are assigned the same colour by $\varphi_i$
and the other neighbor of $u$ is assigned a different colour.
\end{lemma}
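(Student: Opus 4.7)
The plan is to prove the lemma by induction on $k$, strengthening the hypothesis with a bookkeeping condition on how the four colourings behave at every interface pair. Specifically, I require that at each interface pair $\{u_1,u_3\}$ of $T'_k$ with corresponding $4$-face $F=u_1w_1u_3w_2$, the four colourings split $2{+}2$ between an ``E-type'' (in which $\varphi(u_1)\neq\varphi(u_3)$ and $\varphi(w_1)=\varphi(w_2)$) and an ``S-type'' (in which $\varphi(u_1)=\varphi(u_3)$ and $\varphi(w_1)\neq\varphi(w_2)$). These are exactly the two kinds of $3$-colourings of $F$ that use all three colours, and they correspond to the two different kinds of extensions needed in the inductive step.

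The base case $k=1$ is straightforward: $T'_1=C_4$ has no $5$-face and no degree-$3$ vertex, so conditions (a) and (b) are vacuous, and I would display two E-type and two S-type colourings of $C_4$ at one interface pair (e.g.\ $(u_1,v_1,u_3,v_3)=(1,3,2,3), (2,3,1,3), (1,2,1,3), (2,3,2,1)$), noting that the $2{+}2$-split is automatically preserved at the other interface pair because swapping the diagonal interchanges the E/S classification.

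For the inductive step, assume the strengthened hypothesis for $T'_{k-1}$, and construct $T'_k$ by attaching new vertices $x,y,z$ and edges $u_1x,u_3y,u_3z,xy,xz$ at an interface pair $\{u_1,u_3\}$ of $T'_{k-1}$. For an E-type colouring $\varphi_i$, setting $\alpha\colonequals\varphi_i(u_1)$, $\beta\colonequals\varphi_i(u_3)$, and $\gamma\colonequals\varphi_i(w_1)=\varphi_i(w_2)$, the only proper extension with $y\neq z$ is $x=\beta$ and $\{y,z\}=\{\alpha,\gamma\}$, and I use the two orientations for the two E-type colourings. For an S-type colouring $\varphi_i$, the equality $\varphi_i(u_1)=\varphi_i(u_3)$ forces $y=z$, and there are exactly two valid choices for the common colour---namely $\varphi_i(w_1)$ and $\varphi_i(w_2)$---which I assign to the two S-type colourings. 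The verification then splits into four parts: (i) (a) and (b) on old $5$-faces and degree-$3$ vertices transfer immediately, since the restrictions to $T'_{k-1}$ are unchanged; (ii) a direct computation on the two new $5$-faces $u_1w_1u_3yx$ and $u_1w_2u_3zx$ shows that the four pivots across the four extensions are four distinct vertices, with $u_3$ the unique non-pivot in each case; (iii) for the new degree-$3$ vertex $x$, the two E-orientations cover the pairs $\{u_1,y\}$ and $\{u_1,z\}$ while the two S-extensions cover $\{y,z\}$, and analogously for $u_1$ (newly of degree $3$) the E-extensions cover $\{w_1,w_2\}$ and the two S-choices cover $\{w_1,x\}$ and $\{w_2,x\}$; (iv) the strengthened condition propagates to $T'_k$ because E-extensions place an E-type colouring on the new $4$-face $xyu_3z$ while S-extensions place an S-type one, and the untouched interface pair of $T'_{k-1}$ keeps its $2{+}2$-split unchanged.

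The main obstacle is really just choosing the right strengthened hypothesis and making the base case satisfy it at both interface pairs; once the E/S classification is in place, the algebraic analysis of legal colour assignments to $\{x,y,z\}$ matches it so cleanly that the rest becomes essentially mechanical. Without the $2{+}2$-split at the extended interface one could not simultaneously produce a forced-case-A extension (needed to cover $\{y,z\}$ at the new vertex $x$) and both orientations of a case-B extension (needed to cover $\{u_1,y\}$ and $\{u_1,z\}$), so the bookkeeping really is essential.
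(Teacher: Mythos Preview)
Your proposal is correct and takes essentially the same approach as the paper: both argue by induction on $k$ with the same strengthened hypothesis (the $2{+}2$ split into what you call E-type and S-type at the interface $4$-face), and after matching notation your extensions of the four colourings to the three new vertices coincide with the paper's explicit formulas. Your write-up is more detailed in spelling out why the split is needed and in verifying the pivot and neighbour-pair conditions, and you track the hypothesis at both interface pairs rather than just the outer one, but these are presentational differences rather than a different argument.
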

\begin{proof}
Let $H=T'_n$ and let $v_1v_2v_3v_4$ be the $4$-cycle bounding the outer face of $G$, where $v_1v_3$ is an interface
pair of $H$; we will prove the claim by induction on $n$, showing moreover that in
two of the colourings the vertices $v_1$ and $v_3$ receive different colours, and in the other two the vertices
$v_2$ and $v_4$ receive different colours.  The claim is trivial for $n=1$, hence assume that $n\ge 2$
and that $H$ was obtained from a reduced Thomas-Walls graph $H'=T'_{n-1}$ with the outer face bounded by $4$-cycle $v_2v'_3v'_4v'_1$
with the interface pair $v_2v'_4$ by adding the $4$-cycle $v_1v_2v_3v_4$ and the edge $v'_4v_4$.

Let $\varphi_1$, \ldots, $\varphi_4$ be the $3$-colourings of $H'$ obtained by the induction hypothesis, where $\varphi_i(v_2)\neq \varphi_i(v'_4)$
for $i\in \{1,2\}$ and $\varphi_i(v'_1)\neq \varphi_i(v'_3)$ for $i\in\{3,4\}$.  We extend the colourings to $H$ as follows:
\begin{itemize}
\item In $\varphi_1$, we set $\varphi_1(v_4)\colonequals\varphi_1(v_2)$, $\varphi_1(v_1)\colonequals\varphi_1(v'_4)$, and $\varphi_1(v_3)\colonequals\varphi_1(v'_1)$.
\item In $\varphi_2$, we set $\varphi_2(v_4)\colonequals\varphi_2(v_2)$, $\varphi_2(v_1)\colonequals\varphi_2(v'_1)$, and $\varphi_2(v_3)\colonequals\varphi_2(v'_4)$.
\item In $\varphi_3$, we set $\varphi_3(v_4)\colonequals\varphi_3(v'_1)$ and $\varphi_3(v_1)=\varphi_3(v_3)\colonequals\varphi_3(v'_3)$.
\item In $\varphi_4$, we set $\varphi_4(v_4)\colonequals\varphi_4(v'_3)$ and $\varphi_4(v_1)=\varphi_4(v_3)\colonequals\varphi_4(v'_1)$.
\end{itemize}
Observe that the colourings satisfy the required properties, with only $v_2$ not being the pivot of the newly added $5$-faces
$v'_1v'_4v_4v_1v_2$ and $v'_3v'_4v_4v_3v_2$ in any of the $3$-colourings.  For the second part of the claim regarding
neighborhoods of vertices of degree three, it suffices to check neighborhoods of the vertices $v_4$ and $v'_4$.
\end{proof}

A \emph{patch with interface vertices $a$, $b$, and $c$} is a plane graph $F$ with the outer face bounded by an induced cycle $C$ of of length $6$,
where $a$, $b$, and $c$ are distinct non-adjacent vertices of $C$, such that every face of $F$ other than the one bounded by $C$ has length $4$
and no vertex of $G$ is adjacent to all of $a$, $b$, and $c$.
Let $G$ be a plane graph.  Let $G'$ be any graph which can be obtained from $G$ as follows.
Let $S$ be an independent set in $G$ such that every vertex of $S$ has degree $3$.  For each vertex $v\in S$
with neighbors $a$, $b$, and $c$, remove $v$, add new vertices $a'$, $b'$, and $c'$, and a $6$-cycle
$C=aa'bb'cc'$, and draw any patch with interface vertices $a$, $b$, and $c$ in the disk bounded by $C$.
We say that any such graph $G'$ is obtained from $G$ by \emph{patching}.
This operation was introduced by Borodin et al.~\cite{4c4t} in the context of describing planar $4$-critical graphs with exactly $4$ triangles.

Consider a reduced Thomas-Walls graph $G=T'_n$ for some $n\ge 1$, with interface pairs $u_1u_3$ and $v_1v_3$.
A \emph{patched Thomas-Walls graph} is any graph obtained from such a graph $G$ by patching, and $u_1u_3$ and $v_1v_3$ are
its interface pairs (note that $u_1$, $u_3$, $v_1$, and $v_3$ have degree two in $G$, and thus they are not affected by patching).

We start by proving a variant of Lemma~\ref{lemma-mono} for supergraphs of patches.
\begin{lemma}\label{patches}
Suppose $G$ is a triangle-free plane graph with the outer face bounded by a $6$-cycle $K=v_1v_2\ldots v_6$,
with a subgraph $H$ containing $K$ such that every other face of $H$ has length $4$.  Let $S=\{v_1,v_3,v_5\}$.
If $V(G)\neq V(K)$ and no vertex of $G$ is adjacent to all vertices of $S$, then there exists a colouring $\varphi$ of $G$ by subsets of $[3]$ such that a vertex $z\in V(G)\setminus V(K)$
is assigned a set of size two, all other vertices of $G$ are assigned a set of size $1$, and $|\bigcup_{v\in S} \varphi(v)|=2$.
\end{lemma}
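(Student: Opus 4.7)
The plan is to find a vertex $z\in V(G)\setminus V(K)$ of degree at most four, apply Lemma~\ref{lemma-mono} at $z$ to obtain a set colouring $\varphi_0$ in which $z$ receives two colours and every other vertex receives one colour, and then use Corollary~\ref{cor-patches} to either verify or arrange the additional property $|\bigcup_{v\in S}\varphi(v)|=2$.

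To find $z$, I would apply Euler's formula with the standard charges $\deg(v)-4$ on vertices and $|f|-4$ on faces, which sum to $-8$.  Inner faces contribute at least $0$ and the outer face $K$ contributes $+2$, so the vertex charges sum to $-10$; the six vertices of $K$ together contribute at least $-12$.  If every vertex in $V(G)\setminus V(K)$ had degree at least five, its contribution would be at least $|V(G)\setminus V(K)|$, forcing $|V(G)\setminus V(K)|\le 2$.  The cases $|V(G)\setminus V(K)|\in\{1,2\}$ are ruled out by triangle-freeness: any vertex outside $K$ has at most three neighbours in $K$ (since its $K$-neighbourhood is an independent set in the $6$-cycle) and at most one neighbour among the other outside vertices, so its degree is at most four.

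Applying Lemma~\ref{lemma-mono} to $z$ yields $\varphi_0$ with $|\varphi_0(z)|=2$ and $|\varphi_0(u)|=1$ for $u\ne z$.  Let $\psi_0$ be a $3$-colouring obtained by selecting one colour at $z$; by Corollary~\ref{cor-patches} applied to $G$ and its subgraph $H$, we have $|\psi_0(S)|\le 2$.  If equality holds, the lemma is proved with $\varphi\colonequals\varphi_0$.  Otherwise $\psi_0(v_1)=\psi_0(v_3)=\psi_0(v_5)$, say this common colour is $1$; the hypothesis that no vertex of $G$ is adjacent to all of $S$ guarantees that some vertex of $S$, say $v_5$, is not a neighbour of $z$.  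To handle this case I would use the construction from Lemma~\ref{lemma-mono}'s proof: let $z_1,\ldots,z_t$ be the neighbours of $z$ in cyclic order, form $G^*\colonequals(G-z)\cup C^*$ where $C^*=z_1z'_1\cdots z_tz'_t$ is a new $2t$-cycle drawn in place of $z$, and simultaneously precolour $K$ (so that $|\psi(S)|=2$, for example $v_1=v_3=1$ and $v_5=2$) and $C^*$ ($z_i\to 1$, $z'_i\to 2$).  Arguing that this joint precolouring extends to a $3$-colouring of $G^*$ and then setting $\varphi(z)=\{2,3\}$ produces the desired $\varphi$.

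The main obstacle is exactly this joint extension.  Corollary~\ref{cor-patches} handles a precolouring on $K$ alone (via $H$), and the Gimbel--Thomassen and Dvo\v{r}\'ak--Lidick\'{y} extension results invoked in Lemma~\ref{lemma-mono} handle a precolouring on $C^*$ alone, but neither directly provides the simultaneous extension on $K\cup C^*$.  I expect the argument to exploit the fact that $v_5\notin N(z)$ (so the precolouring value at $v_5$ does not conflict with the colour $1$ forced on $C^*$) together with a Kempe-chain modification of an initial extension; one also needs to treat separately the subcase $z\in V(H)$, where removing $z$ disturbs the quadrangulated subgraph $H$ and a replacement subgraph of $G^*$ containing $K$ with all other faces of length $4$ must be exhibited in order to apply Corollary~\ref{cor-patches} to $G^*$.
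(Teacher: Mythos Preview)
Your argument has a genuine gap in the case $|\psi_0(S)|=1$: you reduce the problem to a joint precolouring-extension on $S\cup C^*$ in $G^*$, note yourself that neither Corollary~\ref{cor-patches} nor the extension results behind Lemma~\ref{lemma-mono} supplies this, and then defer to an unspecified Kempe-chain argument.  That joint extension is essentially the full content of the lemma, and the complication you flag when $z\in V(H)$ (so that $H\not\subseteq G^*$ and Corollary~\ref{cor-patches} is unavailable for $G^*$) is real.  As it stands the proposal is a plan, not a proof.

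The paper avoids the joint extension entirely by exploiting that $H$ is bipartite.  Start from a $2$-colouring $\varphi_0$ of $H$; this already gives $\varphi_0(v_1)=\varphi_0(v_3)=\varphi_0(v_5)$, so colour~$3$ is globally unused and can be handed to one vertex $s\in S$ at the end, achieving $|\bigcup_{v\in S}\varphi(v)|=2$ for free.  The remaining task is just to create the bonus at some $z$ without disturbing this.  If $G=H$, pick any $z\notin V(K)$, choose $s\in S$ non-adjacent to $z$ (guaranteed by hypothesis), recolour $s$ to $3$, and add $3$ to $\varphi_0(z)$.  If $G\neq H$, pick a full $4$-face of $H$ bounded by a $4$-cycle $C$, find $z$ of degree at most~$3$ in the open disk bounded by $C$ via Observation~\ref{obs-three}, and apply Lemma~\ref{lemma-mono} only to the subgraph of $G$ in the closed disk bounded by $C$; after permuting colours this matches $\varphi_0$ on $C$ except possibly at one vertex $w$ (necessarily recoloured to $3$).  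Now choose $s\in S\setminus V(C)$ non-adjacent to $w$, recolour $s$ to~$3$, and extend across the remaining full $4$-faces of $H$ by Theorem~\ref{thm-extend}.  The decomposition along $C$ decouples the two tasks---the bonus at $z$ is confined inside $C$, the recolouring of $s$ lives in the $2$-coloured scaffold outside---so no simultaneous extension is ever needed.
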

\begin{proof}
Note that $H$ is a bipartite graph; let $\varphi_0$ be its $2$-colouring.  If $G=H$, then let $z\in V(G)\setminus V(K)$ be an arbitrary vertex;
since no vertex of $G$ is adjacent to all vertices of $S$, by symmetry assume that $v_1z\not\in E(G)$.  We let $\varphi$ be obtained from $\varphi_0$
by giving $v_1$ the colour $3$ and $z$ the set $\{\varphi_0(z),3\}$.

Otherwise, let $C$ be a $4$-cycle bounding a full $4$-face of $H$
and let $z$ be a vertex of $G$ of degree at most three
drawn in the open disk bounded by $C$ (which exists by Observation~\ref{obs-three}).  Let $\psi$ be a colouring of the subgraph of $G$
drawn in the closed disk bounded by $K$ obtained by Lemma~\ref{lemma-mono}, such that $|\psi(z)|=2$.  Permute the colours in $\psi$
so that $\psi$ matches $\varphi_0$ on all but at most one vertex $w$ of $C$; if $\psi$ matches $\varphi_0$ on $C$, then let $w\in V(C)$ be arbitrary.

Since $\varphi_0(v_1)=\varphi_0(v_3)=\varphi_0(v_5)$, either $w$ has no neighbor in $S$, or every vertex in $S\cap V(C)$ is a neighbor of $w$.
Since no vertex of $G$ is adjacent to all vertices of $S$, in either case we conclude that there exists a vertex $s\in S\setminus V(C)$ non-adjacent
to $w$.  Let $\varphi$ be obtained from $\varphi_0$ by changing the colour of $w$ to $\psi(w)$, colouring all vertices in the open disk bounded by $C$ according to $\psi$,
changing the colour of $s$ to $3$, and extending the colouring to $G$ by Theorem~\ref{thm-extend}.  Observe that $\varphi$ has the required properties.
\end{proof}

Next, let us deal with full $5$-faces of a patched Thomas-Walls graph.
It would be convenient to find a small number of $3$-colourings
of a reduced Thomas-Walls graph such that each vertex of a $5$-face is a pivot in at least one of them; then, we could
use Lemma~\ref{lemma-mono} to gain a bit to size of the independent set for each full $5$-face.  Unfortunately, this
is not possible, and we will have to make do with the weaker claim of Lemma~\ref{lemma-coltw}.
Hence, we need to obtain a variant of Lemma~\ref{lemma-mono} ensuring that a given vertex of a $5$-cycle is not the pivot.

Let $K=v_1v_2v_3v_4v_5$ be a $5$-cycle. We say the pairs
$v_2v_4$ and $v_3v_5$ are \emph{$v_1$-pivot preventing chords}---note that in a $3$-colouring of $K$ together with one
of these chords, $v_1$ cannot be the pivot.  Suppose $G$ is a plane triangle-free
graph with the outer face bounded by a $5$-cycle $K$ and let $x$ be a vertex of $K$.  A colouring $\varphi$ of $G$ by subsets
of $[3]$ is \emph{$v$-nice} if there exists a vertex $z\in V(G)\setminus V(K)$ such that $|\varphi(v)|=1$ for all $v\in V(G)\setminus \{z\}$, $|\varphi(z)|=2$,
and $x$ is not the pivot of the restriction of $\varphi$ to $K$.

\begin{lemma}\label{cyclelemma}
Suppose $G$ is a plane triangle-free graph with the outer face bounded by a $5$-cycle $K=v_1v_2\ldots v_5$.
If $G\neq K$, then for every $k\in\{1,\ldots,5\}$, there exists a $v_k$-nice colouring of $G$.
\end{lemma}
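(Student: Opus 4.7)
The plan is to adapt the cycle-replacement argument of Lemma~\ref{lemma-mono} so as to simultaneously control the pivot on the outer $5$-cycle $K$. By the rotational symmetry of $K$, I take $k=1$ and induct on $|V(G)|$. In the base case $V(G)\setminus V(K)=\{z\}$ is a single vertex; triangle-freeness of $G$ forces $N(z)\cap V(K)$ to be an independent subset of $C_5$ of size at most two, and for each such possibility one writes down by hand a $v_1$-non-pivot $3$-colouring $\psi$ of $K$ that gives every vertex of $N(z)\cap V(K)$ the same colour (say $1$), after which $\varphi(z)\coloneqq\{2,3\}$ completes the desired $v_1$-nice colouring.

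For the inductive step, I first reduce to the case where every vertex in $V(G)\setminus V(K)$ has degree at least three: any lower-degree interior vertex $z_0$ can be deleted, the inductive hypothesis invoked on $G-z_0$, and the returned $v_1$-nice colouring extended back over $z_0$ using the degree bound (with a minor reassignment if $z_0$ is a neighbour of the doubly coloured vertex). Once all interior vertices have degree at least three, Lemma~\ref{lemma-three-no2} applied to the edge $v_1v_2$ produces a degree-three vertex $z\in V(G)\setminus V(K)$ with $zv_1,zv_2\notin E(G)$, so its three neighbours $u_1,u_2,u_3$ satisfy $N(z)\cap V(K)\subseteq\{v_3,v_4,v_5\}$, which is an independent subset of $K$ and hence one of $\emptyset$, $\{v_3\}$, $\{v_4\}$, $\{v_5\}$, or $\{v_3,v_5\}$. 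Mimicking the construction from the paragraph preceding Lemma~\ref{lemma-mono}, I replace $z$ by the $6$-cycle $C^*=u_1z_1'u_2z_2'u_3z_3'$ on three fresh vertices to obtain a plane triangle-free graph $G'$. The task reduces to finding a $3$-colouring $\varphi_0$ of $G'$ whose restriction to $K$ is a $v_1$-non-pivot colouring $\psi$ (with each $u_i\in V(K)$ getting colour $1$) and whose restriction to $C^*$ is the alternating pattern $u_i\mapsto 1$, $z_i'\mapsto 2$; then $\varphi(z)\coloneqq\{2,3\}$ together with singleton values elsewhere yields a $v_1$-nice colouring of $G$.

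A short case check over the five possibilities for $N(z)\cap V(K)$ shows that a $v_1$-non-pivot $\psi$ of $K$ assigning colour $1$ to every $u_i\in V(K)$ always exists, so the joint precolouring of $K\cup C^*$ is consistent on the shared vertices. To extend this joint precolouring to $G'$, I plan to combine Theorem~\ref{thm-extend} for the outer cycle $K$ with the Gimbel--Thomassen $2t$-cycle extension cited before Lemma~\ref{lemma-mono} for the inner cycle $C^*$, reconciling them via a Kempe-chain swap in the annular region between $K$ and $C^*$ where needed. The main obstacle I anticipate is showing that this joint extension succeeds: when it is obstructed, the obstruction should take a form analogous to those identified in Theorem~\ref{thm-extend6}, and I would resolve it either by re-selecting $z$ via Lemma~\ref{lemma-three-no2} applied to another edge of $K$, or by switching to the alternative $v_1$-pivot-preventing chord (forcing $c_3\ne c_5$ in place of $c_2\ne c_4$), arguing that the two possible obstructions cannot arise simultaneously.
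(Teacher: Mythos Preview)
Your proposal has a genuine gap at the crucial step: extending the joint precolouring of $K\cup C^*$ to all of $G'$. Neither Theorem~\ref{thm-extend} (which extends a single $(\le 5)$-cycle inward) nor the Gimbel--Thomassen/Dvo\v{r}\'ak--Lidick\'y result (which extends a single short cycle inward) lets you control two boundary cycles simultaneously, and ``reconciling them via a Kempe-chain swap'' does not work in general for $3$-colourings of triangle-free planar graphs. A Kempe swap along a $2$-coloured chain that touches both $K$ and $C^*$ will typically disturb the pattern you need on one of them, and there is no structural reason the chains should be arranged conveniently. Your fallback plan (characterising obstructions \`a la Theorem~\ref{thm-extend6} and then re-selecting $z$ or the chord) is exactly where the real work lies, but you have not carried it out, and it is not routine.

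The paper avoids the two-boundary problem altogether with a device you mention only in passing: the \emph{pivot-preventing chord}. Instead of prescribing a specific $3$-colouring of $K$, one adds the chord $e_0\in\{v_2v_4,v_3v_5\}$ to $G-z$; any $3$-colouring of the resulting graph automatically has $v_1$ not the pivot on $K$. This reduces the task to a \emph{single} boundary extension (from the $6$-cycle $K'$ replacing $z$) in a graph with exactly one triangle, where Theorem~\ref{thm-extend43} and Corollary~\ref{cor-extend63} apply. For those tools to bite, the paper first eliminates interior vertices with two neighbours on $K$, then $4$-faces (by identifying opposite vertices), then non-facial $4$-cycles (via Theorem~\ref{thm-extend43}); only after these reductions does Corollary~\ref{cor-extend63} pin down the obstruction sharply enough that two applications of Lemma~\ref{lemma-three-no2} with different chords yield a contradiction. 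Your argument skips all of these reductions, and without them even the chord trick would not go through.
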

\begin{proof}
Suppose for a contradiction that $G$ is a counterexample with the smallest number of vertices.
Clearly, $G$ has minimum degree at least two, as otherwise we can extend any $3$-colouring of $K$ in that $v_k$
is not the pivot by Theorem~\ref{thm-extend} and give an additional colour to a vertex of degree at most $1$.

We claim that no vertex $v\in V(G)\setminus V(K)$ has two neighbors in $K$.  Indeed, suppose
that $v$ is adjacent say to $v_1$ and $v_3$, and let $K_1=v_1v_2v_3v$ and $K_2=v_3v_4v_5v_1v$.
If $K_2$ does not bound a face, then by the minimality of $G$, the subgraph of $G$ drawn in the closed disk bounded by $K_2$ has
a $v'_k$-nice colouring, where $v'_k=v_k$ if $k\neq 2$ and $v'_k=v$ if $k=2$.
This colouring can be extended to a $v_k$-nice colouring of $G$ by giving $v_2$ the colour of $v$ and
applying Theorem~\ref{thm-extend} to the subgraph of $G$ drawn in the open disk bounded by $K_1$.
If $K_2$ bounds a face, then let $z$ be a vertex of degree at most three in $V(G)\setminus V(K)$
(which exists by Observation~\ref{obs-three}).  We colour $G$ by Lemma~\ref{lemma-mono} so that $z$ is assigned a set of size $2$,
and recolour vertices $v_4$ and $v_5$ so that $v_k$ is not a pivot of the restriction of the colouring to $K$ if necessary.

Since $G$ has minimum degree at least two and no vertex in $V(G)\setminus V(K)$ has more than one neighbor in $K$,
it follows that $|G|\ge 7$.  Suppose that $G$ contains a $4$-face $u_1u_2u_3u_4$.  Since $G$ is plane and triangle-free,
it cannot contain both a path of length three between $u_1$ and $u_3$ and a path of length three between $u_2$ and $u_4$.
By symmetry, assume that there is no path of length three between $u_1$ and $u_3$.  Since no vertex in $V(G)\setminus V(K)$ has more than one neighbor in $K$,
at most one of $u_1$ and $u_3$ belongs to $K$.  Let $G'$ be the graph obtained from $G$ by identifying $u_1$ with $u_3$ to a new vertex $u$
and suppressing parallel edges.  Note that $G'$ is triangle-free and $|G'|=|G|-1\ge 6$, and thus $G'$ has a $v_k$-nice
colouring.  By giving $u_1$ the colour set of $u$ and $u_3$ a single colour from this set, we obtain a $v_k$-nice colouring of $G$.
This contradiction shows that $G$ has no $4$-faces.

Suppose that $G$ contains a $4$-cycle $K'$.  By the previous paragraph, $K'$ does not bound a face.
Let $v$ be a vertex of $G$ of degree at most $3$ contained in the open disk $\Lambda$ bounded by $K'$, which exists
by Observation~\ref{obs-three}.  Let $\varphi'$ be a colouring
obtained by applying Lemma~\ref{lemma-mono} for the subgraph of $G$ drawn in the closure of $\Lambda$ and the vertex
$v$.  Let $G'$ be the graph obtained from the subgraph of $G$ drawn in the complement of $\Lambda$ by adding
a $v_k$-pivot preventing chord.  Since no vertex in $V(G)\setminus V(K)$ has more than one neighbor in $K$, $G'$ contains
exactly one triangle, and thus the colouring of $K'$ given by $\varphi'$ extends to a $3$-colouring $\varphi''$ of $G'$ by Theorem~\ref{thm-extend43}.  The combination
of $\varphi'$ and $\varphi''$ is a $v_k$-nice colouring of $G$, which is a contradiction.  Hence, $G$ contains no $4$-cycles.

Consider any vertex $z\in V(G)\setminus V(K)$ of degree $t\le 3$, and let $z_1$, \ldots, $z_t$ be the neighbors of $z$.
Let $G''$ be the graph obtained from $G-z$ by adding a $v_k$-pivot preventing chord $e_0$ and a $2t$-cycle $K'=z_1z'_1z_2z'_2\ldots z_tz'_t$
with new vertices $z'_1$, \ldots, $z'_t$, redrawn so that $K'$ bounds the outer face of $G''$.  Note that $G''$ contains only one triangle $T$
and that if $t=3$, then every $4$-cycle in $G''$ contains the edge $e_0$.  Observe that if $G''$ has a $3$-colouring such that $z_1$, \ldots, $z_t$
have colour $1$, we can transform it into a $v_k$-nice colouring of $G$ by assigning $v$ the set $\{2,3\}$.  Otherwise,
Theorem~\ref{thm-extend43} and Corollary~\ref{cor-extend63} imply that $t=3$ (i.e., all vertices in $V(G)\setminus V(K)$ have degree at least three)
and $T$ either contains a vertex of $K'$, or all vertices of $T$ have a neighbor in $K'$.

From now on, we can by symmetry assume that $k=1$.  By Lemma~\ref{lemma-three-no2}, there exists a vertex $z\in V(G)\setminus V(K)$ of degree three that has no neighbor in $\{v_3,v_4\}$.
Since $z$ has at most one neighbor in $K$, by symmetry we can assume that $zv_2\not\in E(G)$.
By the previous paragraph applied with $e_0=v_2v_4$, we conclude that $z$ has common neighbors $z_1$, $z_2$, and $z_3$ with $v_2$, $v_3$, and $v_4$, respectively, 
necessarily distinct since $G$ is triangle-free.  Since $z_2$ has degree at least three, Lemma~\ref{lemma-three-no2} implies that one of the open disks bounded
by $5$-cycles $zz_1v_2v_3z_2$ and $zz_3v_4v_3z_2$ contains a vertex $z'$ of degree three with no neighbor in $\{v_3,v_4\}$.  Note that $z'$ cannot have a common
neighbor with both $v_2$ and $v_4$.  Applying the previous paragraph again with $e_0=v_2v_4$, we conclude that $z'$ is adjacent to $v_2$.  Then $z'$ is
non-adjacent to $v_5$ and has no common neighbor with $v_4$, and applying the previous paragraph with $e_0=v_3v_5$, we obtain a contradiction.
\end{proof}

We now combine these results as usual.

\begin{lemma}\label{lemma-full5}
Suppose a triangle-free plane graph $G$ contains a patched Thomas-Walls graph $H$ with $p$ patches and $q$ full $5$-faces as a subgraph.
Then $G$ has an independent set of size at least $\frac{|G|+(p+q)/4}{3}$.
\end{lemma}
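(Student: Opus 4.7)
The plan is to build a colouring of the subgraph $H'\subseteq G$ obtained from $H$ by adjoining all vertices and edges drawn inside full faces of $H$, using subsets of $[12]$ and having total colour-weight at least $4|H'|+p+q$. Since every face of $H'$ is then non-full, the subgraph $F$ in Observation~\ref{independenceadd} is empty, so combining Lemma~\ref{lemma-indset} (which gives $\alpha(H')\ge(4|H'|+p+q)/12$) with $\alpha(G-V(H'))\ge(|G|-|H'|)/3$ (from $3$-colourability of planar triangle-free graphs) delivers
\[
\alpha(G)\ge \frac{4|H'|+p+q}{12}+\frac{|G|-|H'|}{3}=\frac{|G|+(p+q)/4}{3}.
\]
The target colouring will be the disjoint union of four $3$-colourings $\tilde\varphi_1,\ldots,\tilde\varphi_4$ of $H'$, each drawn from its own palette of three colours, with a single extra colour inserted at one distinguished vertex per patch and per full $5$-face.

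The four base colourings $\varphi_1,\ldots,\varphi_4$ of the reduced Thomas-Walls graph $H_0$ underlying $H$ come from Lemma~\ref{lemma-coltw}. For each patch $P$ with interface vertices $\{a,b,c\}$ (the neighbours in $H_0$ of the degree-three vertex removed to form $P$), Lemma~\ref{patches} produces a colouring $\psi_P$ of $P$ in which some interior vertex $z_P$ gets two colours and $\psi_P|_{\{a,b,c\}}$ uses exactly two colours. The moreover part of Lemma~\ref{lemma-coltw} selects an index $i_P$ for which $\varphi_{i_P}|_{\{a,b,c\}}$ has the same two-colour pattern (same pair monochromatic, same third vertex different), and after a permutation of $[3]$ we may use $\psi_P$ as the extension of $\varphi_{i_P}$ to $P$; for each $j\neq i_P$, Corollary~\ref{cor-patches} extends $\varphi_j$ to $P$ as a plain $3$-colouring. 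This yields $\tilde\varphi_1,\ldots,\tilde\varphi_4$ on $V(H)$, with $p$ extra colours contributed in total.

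For each full $5$-face $f$ of $H$ with boundary cycle $K$, I want a vertex $v_k^f\in V(K)$ that is not the pivot of $\tilde\varphi_i|_K$ for any $i$, so that Lemma~\ref{cyclelemma} applied to the subgraph drawn in the closed disk bounded by $K$ produces a $v_k^f$-nice colouring whose pivot on $K$ is one of the four achieved pivots. A permutation of $[3]$ then aligns this colouring with the matching $\tilde\varphi_{i_f}|_K$, and we use it as an extension of $\tilde\varphi_{i_f}$ that gains an extra colour at an interior vertex $z_f$; the other three $\tilde\varphi_j$ are extended to the interior of $f$ by Theorem~\ref{thm-extend} without gain. For $5$-faces of $H$ inherited from $H_0$, Lemma~\ref{lemma-coltw} directly provides both the unique non-pivot vertex $v_k^f$ and the distinctness of the remaining four pivots. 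For full $4$-faces of $H$ (the only other possible full faces), each $\tilde\varphi_i$ is extended via Theorem~\ref{thm-extend} with no gain.

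The main obstacle is to secure the same distinctness of pivots on the $5$-faces created by patching. On such a new $5$-face $a\,X_1\,Y_1\,b\,a'$ replacing an old $5$-face $v\,a\,X_1\,Y_1\,b$ of $H_0$, the pivot of $\tilde\varphi_i$ is partly determined by the old pivot of $\varphi_i$ on the old $5$-face and partly by the colour assigned to the new patch vertex $a'$: old pivots at $v$, $a$, or $b$ force the new pivot to $a'$, $a$, or $b$ respectively, while old pivots at $X_1$ or $Y_1$ leave the new pivot free to toggle between $X_1$ and $Y_1$ via the choice of $\tilde\varphi_i(a')$. A case analysis on the five possible positions of the old special vertex (from Lemma~\ref{lemma-coltw} applied to the old $5$-face) shows that the colours of $a',b',c'$ in the patch extensions can always be chosen so that the four new pivots on each new $5$-face are distinct; this is the main combinatorial work of the proof. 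With distinctness in hand, the combined colouring of $H'$ has total weight at least $4|H'|+p+q$, and the opening display completes the argument.
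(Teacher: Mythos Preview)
Your overall strategy matches the paper's: combine four $3$-colourings into a $[12]$-colouring of $G$, arranging that each patch and each full $5$-face contributes one vertex with an extra colour, then apply Lemma~\ref{lemma-indset}. The handling of full $5$-faces via Lemma~\ref{cyclelemma} is also the same. The difference lies in how you set up the base colourings and deal with the patches.

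You take the four colourings $\varphi_1,\dots,\varphi_4$ from Lemma~\ref{lemma-coltw} on the underlying reduced Thomas--Walls graph $H_0$, extend them into each patch (with a gain for one chosen index $i_P$), and then must argue that the pivots on the new $5$-faces created by patching remain distinct. This leads you to a case analysis depending on the position of the old non-pivot vertex and on the colours assigned to $a',b',c'$ by the patch extensions. There is a real difficulty here: you assert that ``the colours of $a',b',c'$ in the patch extensions can always be chosen'' to make the pivots work out, yet Corollary~\ref{cor-patches} only guarantees \emph{some} extension of $\varphi_j|_{\{a,b,c\}}$, with no control over the values at $a',b',c'$; and for the gain index $i_P$, the colours of $a',b',c'$ are dictated by the output of Lemma~\ref{patches} and are not at your disposal. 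Whether the needed freedom exists depends on which $6$-cycle colourings extend to the quadrangulated patch interior, and this you do not address.

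The paper sidesteps the whole analysis by a simple device: instead of working with $H_0$, it forms the reduced Thomas--Walls graph $H''$ by identifying the degree-two boundary vertices $a_P,b_P,c_P$ of each patch to a single vertex $v_P$, applies Lemma~\ref{lemma-coltw} to $H''$, and pulls the four colourings back to $H'$ (the patched graph minus patch interiors) by giving $a_P,b_P,c_P$ the colour of $v_P$. With this choice the new $5$-face has exactly the same colouring as the corresponding $5$-face of $H''$, so the pivot is unchanged and the four-out-of-five distinctness from Lemma~\ref{lemma-coltw} transfers automatically---no case analysis needed. The extensions into patch interiors are then decoupled from the $5$-face pivot structure. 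If you were to carry out your case analysis correctly, you would essentially rediscover this: the canonical choice $\tilde\varphi_j(a')=\varphi_j(v)$ always yields an extendible $6$-cycle colouring (just colour the entire bipartite class containing $a',b',c'$ by $\varphi_j(v)$) and preserves every pivot.
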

\begin{proof}
Let $H'$ be the subgraph of $H$ obtained by removing the internal vertices of the patches,
keeping just their boundary $6$-cycles.  Let $H''$ be the reduced Thomas-Walls graph
obtained from $H'$ by identifying the vertices $a_P$, $b_P$, and $c_P$ of degree two in each boundary 6-cycle of a patch $P$
to a single vertex $v_P$.  The $3$-colourings of $H''$ obtained by Lemma~\ref{lemma-coltw}
can be naturally transformed into $3$-colourings $\varphi'_1$, \ldots, $\varphi'_4$ of $H'$
by giving $a_P$, $b_P$, and $c_P$ the colour of $v_P$ for each patch $P$.
By Theorem~\ref{thm-extend} and Corollary~\ref{cor-patches}, for $i\in\{1,\ldots,4\}$,
the colouring $\varphi'_i$ extends to a colouring $\varphi_i$ of $G$ by subsets of $[3]$ such that each vertex is assigned a non-empty
set.  Furthermore, Lemmas~\ref{patches} and \ref{cyclelemma} together with the properties of $\varphi'_1$, \ldots, $\varphi'_4$
according to Lemma~\ref{lemma-coltw} ensure that we can choose $\varphi_1$, \ldots, $\varphi_4$
so that for each patch $P$ of $H$, there exists a vertex $z\in V(P)$ not contained in the boundary cycle
of the patch such that $|\varphi_j(z)|=2$ for at least one $j\in \{1,\ldots,4\}$; and
for each full $5$-face $f$ of $H$, there
exists a vertex $z\in V(G)\setminus V(H)$ drawn in $f$ such that $|\varphi_j(z)|=2$ for at least one $j\in \{1,\ldots,4\}$.

We replace colours in $\varphi_i$ by $\{3i-2,3i-1,3i\}$ for $i\in\{1,\ldots,4\}$.
Together, the four colourings give a colouring of $G$ by subsets of $[12]$ such that
each vertex is assigned a set of size at least $4$ and each patch contains a vertex
assigned a set of size $5$.  By Lemma~\ref{lemma-indset},
$$\alpha(G)\ge \frac{4|G|+p+q}{12}=\frac{|G|+(p+q)/4}{3}.$$
\end{proof}

\section{Large independent sets}\label{sec-main}

We now proceed with the proof of Theorem~\ref{thm-main}.  The basic idea is to find a large
number of vertices of $G$ that are far apart, and find a colouring by non-empty subsets of $[3]$ that
gives these vertices two colours.
Of course, such a set does not necessarily have to exist (e.g., in a star, all vertices are distance
at most two from one another).  However, this can be worked around by removing a bounded number
of vertices first.
\begin{theorem}[Ne\v{s}et\v{r}il and Ossona de Mendez~\cite{npom-nd1}]\label{thm-wide}
For all integers $s$ and $d$, there exists $n_0$ as follows.
Let $G$ be a planar graph and let $S'$ be a set of its vertices.  If $|S'|\ge n_0$,
then there exist disjoint sets $S\subset S'$ and $Z\subset V(G)$ such that $|Z|\le 2$,
$|S|=s$, and the distance between any two vertices of $S$ in $G-Z$ is at least $d$.
\end{theorem}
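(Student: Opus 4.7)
The plan is to prove this by induction on $s$, with the base case $s=1$ trivial (choose any element of $S'$ with $Z=\emptyset$). For the inductive step I would assume the claim with parameter $s-1$ and threshold $n_1:=n_0(s-1,d)$, and consider a planar graph $G$ with a very large set $S'\subseteq V(G)$. Pick an arbitrary $v\in S'$ and let $N$ be the closed ball of radius $d-1$ around $v$ in $G$. I then split into a ``spread'' case and a ``clumped'' case depending on how $S'$ distributes relative to $N$.

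In the spread case, where $|S'\setminus N|\ge n_1$, I would apply the induction hypothesis to $S'\setminus N$ to obtain $S^*\subseteq S'\setminus N$ of size $s-1$ together with $Z\subseteq V(G)$ of size at most $2$ satisfying $\mathrm{dist}_{G-Z}(u,u')\ge d$ for all distinct $u,u'\in S^*$. Setting $S:=\{v\}\cup S^*$ (with the same $Z$) finishes this case: every element of $S^*$ lies outside $N$, so it is at distance at least $d$ from $v$ in $G$, and therefore also in $G-Z$.

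In the clumped case, where $|S'\cap N|$ is very large, I would exploit the fact that balls of radius $d$ in a planar graph have tree-width $O(d)$ (Eppstein's linear local tree-width theorem), so $G[N]$ has bounded tree-width. A large vertex set in a bounded-tree-width graph admits small separators, and by iterating a centroid-style separator argument in a tree-decomposition of $G[N]$, descending each time into the piece that retains a constant fraction of $S'\cap N$, one can reach a configuration with $s$ vertices of $S'$ lying in pairwise distinct components of $G-Z$ for a bounded set $Z$; these are then automatically at infinite distance in $G-Z$. To push $|Z|$ all the way down to $2$, rather than $O(d)$, I would use the block and SPQR structure of planar graphs: after peeling pendant biconnected pieces, any clump of $S'$ spanning many $3$-connected components in a planar graph must share a $2$-element cut, and one can descend the block/SPQR tree while paying at most two separator vertices in total.

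The main obstacle is precisely this final step in the clumped case: achieving the absolute bound $|Z|\le 2$, independent of $s$ and $d$, is substantially stronger than what the generic uniform quasi-wideness argument for nowhere-dense classes gives, and genuinely relies on the fine structure of small vertex cuts in planar graphs. Once this planar-specific separator lemma is in hand, the outer induction on $s$ is formal, and the threshold $n_0(s,d)$ is obtained by iterating the recursion $s$ times.
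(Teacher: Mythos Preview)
The paper does not prove this theorem; it is cited from Ne\v{s}et\v{r}il and Ossona de Mendez and used as a black box, so there is no in-paper argument to compare against.

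Judged on its own, your spread case is nearly right but has a small gap: the $Z$ returned by the inductive call may contain $v$, so $S=\{v\}\cup S^*$ and $Z$ need not be disjoint. This is easy to fix (e.g.\ run the greedy scatter directly in $G$ and take $Z=\emptyset$ in this branch, entering the clumped case only when the greedy stalls).

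The clumped case, however, has a genuine gap. Local tree-width gives separators of size $O(d)$, and iterating centroid bags to isolate $s$ vertices costs far more than two deletions. The block/SPQR fallback cannot rescue this: a $3$-connected planar graph has no $2$-cut at all, yet the theorem applies to it --- a wheel $W_n$ is $3$-connected, its SPQR tree is a single node, and the only way to $d$-scatter many rim vertices is to delete the hub, an operation your SPQR descent never proposes. Your stated subgoal of placing $s$ vertices of $S'$ into \emph{distinct components} of $G-Z$ with $|Z|\le 2$ is in fact false in general (on a long cycle two deletions leave at most two components), so even the target is wrong. What actually caps $|Z|$ at a constant in the Ne\v{s}et\v{r}il--Ossona de Mendez argument is minor exclusion, not cut structure: one iterates the spread/clump dichotomy, and once too many centres $z_1,z_2,\ldots$ have been extracted, those centres together with a few mutually far survivors can be contracted into branch sets of a $K_5$ or $K_{3,3}$ minor, contradicting planarity. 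This forbidden-minor step is absent from your proposal, and the SPQR substitute does not work.
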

By Euler's formula, planar triangle-free graphs have average degree less than $4$,
and thus they have many vertices of degree $4$.
\begin{observation}\label{obs-deg4}
Every planar triangle-free graph $G$ has at least $|G|/5$ vertices of degree at most $4$.
\end{observation}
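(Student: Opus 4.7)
The plan is a short double-counting argument based on Euler's formula. Since $G$ is planar and triangle-free, every face in any plane embedding has length at least $4$, so the standard face-edge incidence bound gives $2|E(G)| \ge 4|F|$, which combined with $|V(G)| - |E(G)| + |F| = 2$ yields $|E(G)| \le 2|G| - 4$ whenever $|G| \ge 3$ (and the small cases $|G| \le 2$ are trivial since every vertex then has degree at most $1$). Consequently the sum of degrees satisfies $\sum_{v \in V(G)} \deg(v) = 2|E(G)| \le 4|G| - 8$.

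Next, I would split the vertex set according to degree. Let $a$ be the number of vertices of $G$ of degree at most $4$ and $b = |G| - a$ the number of vertices of degree at least $5$. Bounding the degree sum from below by $5b$ (contributions from small-degree vertices are nonnegative), the previous inequality gives $5b \le 4|G| - 8$, so
\[
a \;=\; |G| - b \;\ge\; |G| - \frac{4|G| - 8}{5} \;=\; \frac{|G| + 8}{5} \;\ge\; \frac{|G|}{5},
\]
as required. There is essentially no obstacle here; the only mild subtlety is the small-graph boundary case, which is disposed of immediately, so this truly is an observation rather than a lemma needing a genuine argument.
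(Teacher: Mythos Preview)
Your proof is correct and is precisely the argument the paper has in mind: the paper does not give a formal proof but merely remarks that by Euler's formula the average degree is less than $4$, and your double-counting with the split into $a$ and $b$ is the standard way to make that sentence rigorous. There is nothing to add.
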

In view of Lemma~\ref{lemma-mono}, these results give us a hope that the strategy could succeed.

Of course, it may be the case that the described colouring does not exist.
To deal with this issue, we will need some deep results about precolouring extension
in plane triangle-free graphs. The following is an easy consequence of Lemma~5.2 of~\cite{trfree6}.

\begin{theorem}[Dvo\v{r}\'ak et al.~\cite{trfree6}]\label{thm-weight}
There exists a constant $\beta\ge 1$ as follows.  Let $G$ be a plane triangle-free graph
in that every $4$-cycle bounds a face, and let $S$ be a set of vertices of $G$.
There exists a subgraph $F$ of $G$ such that $S\subseteq V(F)$, for every face $f$ of $F$ each $3$-colouring
of the boundary of $f$ extends to a $3$-colouring of the subgraph of $G$ drawn in the closure of $f$,
and $F$ contains at most $\beta|S|$ vertices such that not all incident faces of $F$ are $4$-faces.
\end{theorem}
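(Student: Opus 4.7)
The plan is to derive Theorem~\ref{thm-weight} as a packaging of Lemma~5.2 of~\cite{trfree6}, which supplies the substantive structural content. I would begin by fixing a subgraph $F_0\subseteq G$ containing $S$ together with sufficient connecting structure; for concreteness, take $F_0$ to be the union of $S$ with shortest paths joining pairs of vertices of $S$, so that the total boundary length summed across all faces of $F_0$ is $O(|S|)$. For each face $f$ of the current subgraph, consider the subgraph $G_f$ of $G$ drawn in the closure of $f$, with the boundary of $f$ serving as a precolouring frame. If some $3$-colouring of the boundary of $f$ fails to extend to $G_f$, I would apply Lemma~5.2 of~\cite{trfree6} to extract a bounded-complexity subgraph $H_f\subseteq G_f$ capturing every extension obstruction in $f$, add $H_f$ to the current subgraph, and iterate until no face has an unresolvable boundary $3$-colouring. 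The result is the desired $F$.

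The key steps are then: first, argue termination of the iteration (each insertion strictly consumes part of the interior of $f$ and refines it into smaller faces, so the process halts after finitely many steps since $G$ is finite); second, verify that in the terminal $F$ each boundary $3$-colouring of every face extends, which holds by construction; and third, bound the number of vertices of $F$ that are incident to a non-$4$-face by a constant times $|S|$.

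The third point is the heart of the argument and the main obstacle. The plan is to set up a charging scheme in which each face $f$ of the current subgraph carries weight proportional to $|f|-4$, and use the quantitative content of Lemma~5.2 to show that each critical insertion contributes at most a bounded number of non-$4$-face vertices per unit of boundary length processed. Since the initial weight sums to $O(|S|)$, telescoping across the iteration bounds the total number of non-$4$-face vertices in $F$ by $O(|S|)$, which produces the constant $\beta$.

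The hardest part is ensuring the charging argument is watertight: one must check that insertions in a face $f$ do not create elsewhere long boundaries that trigger further deep insertions, so that the potential function does not blow up during the iteration. This is precisely the quantitative guarantee that Lemma~5.2 of~\cite{trfree6} is designed to provide, and the proof reduces to carefully unpacking its conclusions and translating face-length accounting into the vertex count demanded by the conclusion of Theorem~\ref{thm-weight}.
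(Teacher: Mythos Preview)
The paper does not give its own proof of this theorem; it merely states that the result is ``an easy consequence of Lemma~5.2 of~\cite{trfree6}'' and uses it as a black box. So there is no detailed argument in the paper to compare your sketch against. That said, your proposed derivation has a genuine gap at the very first step.

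You write that you would take $F_0$ to be $S$ together with shortest paths joining pairs of vertices of $S$, ``so that the total boundary length summed across all faces of $F_0$ is $O(|S|)$.'' This is false: shortest paths in $G$ can have length depending on the diameter of $G$, which is not controlled by $|S|$ at all. A union of paths on $n$ vertices has a single face of boundary length $2(n-1)$, and here $n$ can be arbitrarily large compared to $|S|$. Consequently your initial potential is not $O(|S|)$, and the entire charging/telescoping argument that follows has nothing to anchor it. Even worse, in such an $F_0$ \emph{every} vertex is incident with a non-$4$-face, so before any iteration you already have far more than $\beta|S|$ bad vertices.

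The actual content of Lemma~5.2 in~\cite{trfree6} is that the critical subgraph for extending a precolouring of a boundary of total length $\ell$ has its ``non-$4$-face weight'' bounded linearly in $\ell$. To exploit this with a bound in terms of $|S|$, one applies the lemma directly with the set $S$ itself playing the role of the precoloured boundary data (treating the vertices of $S$ as degenerate boundary components), rather than first inflating $S$ into a large connected skeleton. The phrase ``easy consequence'' in the paper signals a one-shot application of this kind, not an iterative refinement with a potential function. Your iterative scheme could perhaps be salvaged, but only if you replace the shortest-path skeleton by something whose total boundary length genuinely is $O(|S|)$; as written, the argument does not go through.
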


We need the another result to deal with the non-facial $4$-cycles.
Let $G$ be a plane graph and let $H$ be its subgraph.  We say that a face $f$ of $H$ is \emph{TW-full}
if $f$ has two boundary cycles $C_1$ and $C_2$, both of length $4$, and $G$ contains a subgraph
isomorphic to a patched Thomas-Walls graph whose $4$-faces are bounded by $C_1$ and $C_2$, drawn in the closure
of $f$.  We say that the face is \emph{$\ell$-TW-full} if this subgraph is obtained by patching
from a reduced Thomas-Walls graph $T'_n$ with $n\ge \ell$.

\begin{theorem}[Dvo\v{r}\'ak and Lidick\'{y}~\cite{cylgen-part2}]\label{thm-cylinder}
There exists a constant $\delta\ge 5$ as follows.  Let $G$ be a plane triangle-free graph
and let $C_1$ and $C_2$ be cycles bounding two distinct $4$-faces of $G$.  If the distance
between $C_1$ and $C_2$ in $G$ is at least $\delta$ and some $3$-colouring of $C_1\cup C_2$
does not extend to a $3$-colouring of $G$, then $G$ has a subgraph $H$ with $C_1$ and $C_2$ contained in different components of $H$
such that $|H|\le 12$, a face of $H$ is TW-full and all other faces of $H$ are $4$-faces.
\end{theorem}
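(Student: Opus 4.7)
The plan is to argue by contradiction using a minimum $\psi$-critical counterexample, exploiting the body of precolouring-extension results for plane triangle-free graphs. Take $G$ to be a counterexample minimising $|V(G)|+|E(G)|$, together with a $3$-colouring $\psi$ of $C_1\cup C_2$ that does not extend. Passing to a subgraph we may assume $G$ is $\psi$-\emph{critical}, meaning every proper subgraph $G'\supseteq C_1\cup C_2$ of $G$ admits an extension of $\psi$. First, using Theorems~\ref{thm-extend}, \ref{thm-extend43}, and~\ref{thm-extend6} via standard reduction and identification arguments, I would check that every $4$-cycle of $G$ distinct from $C_1, C_2$ bounds a face, that no $5$-cycle of $G$ nontrivially separates, and that the collar of $4$-faces around each of $C_1, C_2$ is as wide as criticality permits.

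The heart of the proof is a discharging argument based on Euler's formula. Assigning each vertex $v$ charge $\deg(v)-4$ and each face $f$ charge $|f|-4$ gives total charge $-8$, and, by triangle-freeness, all face charges are non-negative with $4$-faces neutral. Combining the discharging with local criticality lemmas such as Corollary~\ref{cor-extend63}, the goal is to show that the critical skeleton---the subgraph that one eventually extracts as $H$---has all internal faces of length exactly $4$, except possibly for one face that contains the rest of $G$ lying between $C_1$ and $C_2$. The distance hypothesis $\mathrm{dist}_G(C_1,C_2)\ge\delta$ prevents the quadrangulated collars of $C_1$ and $C_2$ from meeting inside the skeleton, so the leftover region must be a long quadrangulated cylinder. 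Criticality of this cylinder, combined with the Thomas--Walls classification quoted in the introduction (the $4$-critical Klein-bottle graphs without short contractible cycles are precisely the Thomas-Walls graphs), then forces the cylinder to be a patched reduced Thomas-Walls graph, making the corresponding face of $H$ TW-full. Taking $H$ to be the critical skeleton with this cylinder collapsed to a single face gives the desired subgraph: $C_1$ and $C_2$ are in different components of $H$, that face is TW-full, and every other face is a $4$-face. The bound $|H|\le 12$ follows by bounding the size of the collars near $C_1$ and $C_2$: any cap of $4$-faces larger than an absolute constant would contain either a non-facial $4$-cycle or a reducible configuration, contradicting $\psi$-criticality.

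The main obstacle is the middle step: showing that the critical skeleton consists of small constant-size caps near $C_1$ and $C_2$ joined by a patched reduced Thomas-Walls tube. This requires a delicate propagation of criticality through a potentially long quadrangulated region, blending planar discharging with identification moves that preserve triangle-freeness (in the spirit of the proof of Corollary~\ref{cor-frpltr}), and is the substantive content of the cited work~\cite{cylgen-part2}. In particular, one must rule out more exotic critical cylinders by systematically reducing them to elementary Thomas-Walls building blocks, and must sharpen the ``long quadrangulated cylinder implies Thomas-Walls'' step enough to make the explicit bound $|H|\le 12$ quantitatively honest rather than merely qualitative.
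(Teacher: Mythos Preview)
This theorem is not proved in the present paper at all: it is quoted as a result of Dvo\v{r}\'ak and Lidick\'{y}~\cite{cylgen-part2} and used as a black box in the proof of Corollary~\ref{cor-weight}. There is therefore no proof in the paper to compare your proposal against.

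As for the proposal itself, it is a plausible high-level outline of the kind of argument one expects in~\cite{cylgen-part2}: pass to a $\psi$-critical subgraph, use discharging to control faces, and invoke structural results on critical cylinders to force a patched Thomas--Walls tube. But it remains a sketch rather than a proof. The hard step you identify---that a long $\psi$-critical quadrangulated cylinder must be a patched reduced Thomas--Walls graph---is precisely the content of the cited paper, and nothing in your outline actually establishes it; you simply assert that ``criticality of this cylinder, combined with the Thomas--Walls classification\ldots forces the cylinder to be a patched reduced Thomas--Walls graph''. The Thomas--Walls classification concerns $4$-critical graphs in the Klein bottle, not $3$-colouring-extension-critical plane cylinders with two precoloured $4$-cycles, so the passage from one to the other is the entire difficulty and is not supplied here. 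Likewise, the bound $|H|\le 12$ is asserted but not derived. If you want to prove this theorem rather than cite it, you will need to engage with the detailed structure theory developed in~\cite{cylgen-part2}.
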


A family $\FF$ of cycles in a plane graph $G$ is \emph{laminar} if for any cycles $K_1,K_2\in \FF$,
the open disks bounded by $K_1$ and $K_2$ either are disjoint, or one of them is a subset of the other one.
We define $R_\FF$ as the rooted tree whose vertices are subsets of the plane, defined as follows:
the root is the whole plane, and for each vertex $\Lambda$ of $R_\FF$,
the sons of $\Lambda$ in $R_\FF$ are the inclusion-wise maximal open disks bounded by cycles of $\FF$ that are properly contained
in $\Lambda$.  For each $\Lambda\in V(R_\FF)$ with sons $\Lambda_1$, \ldots, $\Lambda_p$, let $\Lambda^\circ=\Lambda\setminus\bigcup_{i=1}^p \Lambda_p$,
and let $G_\Lambda$ denote the subgraph of $G$ drawn in the closure of $\Lambda^\circ$.  Let $D_\Lambda$ be the set of vertices of the cycles
of $\FF$ forming the boundaries of $\Lambda_1$, \ldots, and $\Lambda_p$. If $\Lambda$ is not the root of $R_\FF$, then let $U_\Lambda$ denote the
set of vertices of the cycle bounding $\Lambda$, otherwise let $U_\Lambda=\emptyset$.

\begin{corollary}\label{cor-weight}
For every positive integer $\ell$, there exists a constant $\beta_1$ as follows.  Let $G$ be a plane triangle-free graph and let $S$ be a non-empty
set of vertices of $G$.
There exists a subgraph $F$ of $G$ such that $S\subseteq V(F)$, every face $f$ of $F$ is either $\ell$-TW-full or
each $3$-colouring of the boundary of $f$ extends to a $3$-colouring of the subgraph of $G$ drawn in the closure of $f$,
and $F$ contains at most $\beta_1|S|$ vertices such that not all incident faces of $F$ are $4$-faces.
\end{corollary}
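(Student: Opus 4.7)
The strategy is to reduce Corollary~\ref{cor-weight} to Theorem~\ref{thm-weight} by introducing a laminar family of $4$-cycles in $G$ that carves off the non-facial $4$-cycles through TW-tube structures. The key structural tool is Theorem~\ref{thm-cylinder}: it tells us that the only way $3$-colourings between distant $4$-faces can fail to extend is through a TW-full configuration, and this is exactly the kind of object we are allowed to keep as a face of $F$.

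First, I will construct a laminar family $\FF$ of $4$-cycles in $G$. For every maximal patched Thomas-Walls graph in $G$ of length at least $\ell$ whose two $4$-faces form a tube in $G$, I include the two bounding $4$-cycles in $\FF$; I then replace inner cycles with outer ones to enforce laminarity, and prune so that every cycle of $\FF$ actually separates some vertex of $S$ from another vertex of $S$ or from the outer face. For each $\Lambda\in V(R_\FF)$, either (a) $\Lambda^\circ$ is the annular region between a parent and child cycle of $\FF$ bounding an $\ell$-TW-full configuration, in which case I declare it an $\ell$-TW-full face of the final $F$, or (b) I claim that every $4$-cycle in $G_\Lambda$ bounds a face of $G_\Lambda$, and I apply Theorem~\ref{thm-weight} to $G_\Lambda$ with $S_\Lambda := (S\cap\Lambda^\circ)\cup U_\Lambda\cup D_\Lambda$ to obtain a subgraph $F_\Lambda\supseteq S_\Lambda$ with at most $\beta\,|S_\Lambda|$ vertices incident to non-$4$-faces. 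I then take $F$ to be the union of the cycles in $\FF$ with all of the $F_\Lambda$'s; by construction every face of $F$ is either $\ell$-TW-full or has each of its $3$-colourings extending to the enclosed subgraph of $G$.

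For the vertex count, each cycle of $\FF$ lies in $D_\Lambda$ for its parent $\Lambda$ and in $U_{\Lambda'}$ for its child $\Lambda'$, so it contributes at most $8$ to $\sum_\Lambda|S_\Lambda|$; the pruning guarantees $|V(R_\FF)|=O(|S|)$, and hence $\sum_\Lambda \beta\,|S_\Lambda|\le\beta_1|S|$ for a suitable $\beta_1$ depending on $\ell$ and $\beta$. The main obstacle is verifying claim (b): that $G_\Lambda$ satisfies the hypothesis of Theorem~\ref{thm-weight}. Suppose for contradiction some $4$-cycle $C\subseteq G_\Lambda$ does not bound a face of $G_\Lambda$. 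I would select a $4$-face $C'$ of $G_\Lambda$ at distance at least $\delta$ from $C$ in $G_\Lambda$ (if no such $C'$ exists, $G_\Lambda$ is small and can be absorbed into $F$ directly). If every $3$-colouring of $C\cup C'$ extends in $G_\Lambda$, we may safely add $C$ to $\FF$ without breaking laminarity; otherwise Theorem~\ref{thm-cylinder} produces a TW-full subgraph, which must either already have its bounding $4$-cycles in $\FF$ inside the disk of $C$---contradicting $C\subseteq G_\Lambda$---or give a new TW-tube of length at least $\ell$ that was overlooked, contradicting the maximality built into $\FF$. The chief bookkeeping challenges will be handling the small-$G_\Lambda$ base case and ensuring that short TW-tubes (of length less than $\ell$) are correctly absorbed by the $F_\Lambda$'s without inflating the bad-vertex count beyond $\beta_1|S|$.
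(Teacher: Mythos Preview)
Your claim (b) is the central gap, and it is simply false. Removing the bounding $4$-cycles of long TW-tubes does \emph{not} leave regions in which every $4$-cycle bounds a face: a large planar grid, for instance, is triangle-free, contains no patched Thomas--Walls subgraph whatsoever, and yet is full of non-facial $4$-cycles. So after your construction of $\FF$, the regions $G_\Lambda$ will in general fail the hypothesis of Theorem~\ref{thm-weight}, and the proof collapses.

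Your attempted verification of (b) does not rescue this. First, Theorem~\ref{thm-cylinder} requires \emph{both} cycles $C_1,C_2$ to bound $4$-faces of $G$, but your $C$ is by assumption non-facial, so the theorem does not apply. Second, even if it did, the TW-full subgraph it produces is not asserted to have length at least $\ell$; it can be arbitrarily short, so neither branch of your dichotomy (``already in $\FF$'' versus ``new long tube overlooked'') is forced. Third, the fallback ``if no distant $4$-face $C'$ exists then $G_\Lambda$ is small'' is unjustified: the absence of a $4$-face far from $C$ does not bound $|G_\Lambda|$.

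The paper's argument avoids this entirely by taking $\FF$ to be a \emph{maximal} laminar family of non-facial $4$-cycles of $G$; maximality is exactly what forces every $G_\Lambda$ to have no non-facial $4$-cycle, so Theorem~\ref{thm-weight} applies directly on each piece. The price is that $R_\FF$ may be huge, but after first deleting the interiors of non-facial $4$-cycles not separating $S$, the tree has at most $|S|$ leaves. One then distinguishes the $O(|S|)$ branch/$S$-containing nodes $X$ (handled by Theorem~\ref{thm-weight}) from the paths of $R_\FF-X$; each such path corresponds to an annulus between two $4$-cycles, and is handled either by cutting along a short transversal path and applying Theorem~\ref{thm-weight} (if the two cycles are close), or by Theorem~\ref{thm-cylinder} (if they are far, yielding the $\ell$-TW-full face). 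The TW-tubes thus emerge from the structure of $R_\FF$ rather than being identified in advance.
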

\begin{proof}
Let $\beta$ be the constant of Theorem~\ref{thm-weight} and $\delta$ the constant of Theorem~\ref{thm-cylinder}.
We set $\beta_1=(4\max(\delta,\ell)+33)\beta$.

If $G$ contained a non-facial $4$-cycle $K$ that does not separate any two vertices of $S$, we can remove the vertices and edges
drawn in the part of the plane minus $K$ not containing $S$, since any $3$-colouring extends to the removed part by Theorem~\ref{thm-extend}.
Hence, assume that every non-facial $4$-cycle in $G$ separates a pair of vertices of $S$.

Let $\FF$ be a maximal laminar family of non-facial $4$-cycles of $G$, and consider the tree $R_\FF$.  For every $\Lambda\in V(R_\FF)$,
the maximality of $\FF$ implies that $G_\Lambda$ does not have any non-facial $4$-cycle.  Furthermore, since every non-facial $4$-cycle
in $G$ separates a pair of vertices of $S$, if $\Lambda$ is a leaf of $R_\FF$, then a vertex of $S$ is drawn in $\Lambda$, and in
particular $R_\FF$ has at most $|S|$ leaves.  Let $X$ be the set of vertices $\Lambda\in V(R_\FF)$ such that either $\Lambda$ has
more than one son in $R_\FF$, or a vertex of $S$ is contained in $\Lambda^\circ$; we have $|X|<2|S|$.  Note that each leaf of $R_\FF$
as well as the root of $R_\FF$ belong to $X$.  For $\Lambda\in X$,
let $S_\Lambda$ be the set consisting of $U_\Lambda\cup D_\Lambda$ and of the vertices of $S$ contained in $\Lambda^\circ$.
Note that for every son $\Lambda'$ of $\Lambda$, there exists a descendant of $\Lambda'$ belonging to $X$,
and thus $\sum_{\Lambda\in X}|D_\Lambda|<4|X|$.  Trivially, we have $\sum_{\Lambda\in X}|U_\Lambda|=4|X|$. Consequently, $\sum_{\Lambda\in X} |S_\Lambda|< 8|X|+|S|\le 17|S|$.
For each $\Lambda\in X$, let $F_\Lambda$ be the subgraph of $G_\Lambda$ obtained by applying Theorem~\ref{thm-weight} for $G_\Lambda$
and $S_\Lambda$.

Note that $R_\FF-X$ is a union of at most $|X|$ paths.  For each such path $P$ whose vertex closest to the root is $\Lambda_1$
and the opposite endvertex is $\Lambda_2$, let $G_P=\bigcup_{\Lambda\in V(P)} G_\Lambda$ and $C_P=C_1\cup C_2$, where
$C_1$ is the cycle bounding $\Lambda_1$ and $C_2$ is the cycle bounding the son of $\Lambda_2$.  Note that vertices of $S$ in $G_P$ may
only belong to $C_1$.  Let $Y$ be the set of paths $P$ such that
the distance between $C_1$ and $C_2$ in $G_P$ is less than $\max(\delta,\ell)$, and let $Z$ be the set of paths $P$ such that 
the distance between $C_1$ and $C_2$ in $G_P$ is at least $\max(\delta,\ell)$.

For $P\in Y$, let $Q$ be a path of length less than $\max(\delta,\ell)$
between $C_1$ and $C_2$, and let $G'_P$ be the graph obtained from $G_P$ by cutting along $Q$; i.e., each vertex of $Q$ gives rise to two
vertices of $G'_P$, each incident with the neighbors on one side of $Q$.  Let $S_P$ consist of $V(C_P)$ and of the vertices of the
two paths of $G'_P$ corresponding to $Q$.  Note that $G'_P$ contains no non-facial $4$-cycles, and let $F'_P$ be the subgraph of $G'_P$
obtained by applying Theorem~\ref{thm-weight} for $G'_P$ and $S_P$.  Let $F_P$ be the subgraph of $G_P$ obtained from $F'_P$ by gluing
back the vertices created by cutting along $P$.

For $P\in Z$, if every $3$-colouring of $C_P$ extends to a $3$-colouring of $G_P$, then let $H_P=C_P$; otherwise,
let $H_P$ be the subgraph of $G_P$ obtained by applying Theorem~\ref{thm-cylinder}.

We let $F=\bigcup_{\Lambda\in X} F_\Lambda\cup\bigcup_{P\in Y} F_P\cup\bigcup_{P\in Z} H_P$.
If a face $f$ of $F$ is a face of $F_z$ for some $z\in X\cup Y$,
then every $3$-colouring of the boundary of $f$ extends to a $3$-colouring of the subgraph of $G$ drawn in the closure of $f$.
If $f$ is a face of $H_P$ for $P\in Z$, then either $f$ is $\ell$-TW-full, or it is a $4$-face and
every $3$-colouring of its boundary extends to a $3$-colouring of the subgraph of $G$ drawn in the closure of $f$
by Theorem~\ref{thm-extend}.

Hence, it suffices to bound the number of vertices of $F$ such that not all incident faces are $4$-faces.
For $\Lambda\in X$, at most $\beta|S_\Lambda|$ vertices are incident with a non-$4$-face of $F_\Lambda$,
and thus there are at most $17\beta|S|$ such vertices in total.
For $P\in Y$, at most $\beta(2\max(\delta,\ell)+8)$ vertices are incident with a non-$4$-face of $F_P$ (by
considering the corresponding faces of $F'_P$), and for $P\in Z$, the graph $H_P$ has at most $12$ vertices.
This gives at most $\beta(2\max(\delta,\ell)+8)|X|\le 2\beta(2\max(\delta,\ell)+8)|S|$ vertices incident with non-$4$-faces
over all paths of $Y\cup Z$.  Consequently, the number of vertices of $F$ incident with non-$4$-faces
is at most $(4\max(\delta,\ell)+33)\beta|S|=\beta_1|S|$ as required.
\end{proof}

We now prove a weaker variant of Theorem~\ref{thm-main}, where the excess of the size of the independent
set over the third of the number of vertices is not linear.

\begin{lemma}\label{lemma-large-fst}
There exists a function $f:\mathbf{N}^2\to\mathbf{N}$ as follows.
Let $k$ and $a$ be positive integers and let $G$ be a plane triangle-free graph not containing a clean Thomas-Walls $k$-tube.
If $|G|\ge f(k,a)$, then $\alpha(G)\ge\frac{|G|+a}{3}$.
\end{lemma}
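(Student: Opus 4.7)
The plan is to locate many vertices of degree at most $4$ that are pairwise far apart in $G$ (after removing a bounded number of vertices), use Corollary~\ref{cor-weight} to find a subgraph $F$ with almost all faces of length $4$, exploit the absence of clean Thomas-Walls $k$-tubes to force every ``exceptional'' face of $F$ to be either extension-friendly or to contain a patch or a full $5$-face, and finally combine the multi-colouring machinery of Lemma~\ref{lemma-full5} with a Lemma~\ref{lemma-mono}-style gain at each well-separated low-degree vertex.

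Concretely, I fix $s=\Theta(a)$, a large constant $d$ depending on $k$, and set $f(k,a):=5n_0(s,d)$ with $n_0$ from Theorem~\ref{thm-wide}.  Assuming $|G|\ge f(k,a)$, Observation~\ref{obs-deg4} yields at least $n_0(s,d)$ vertices of $G$ of degree $\le 4$, so Theorem~\ref{thm-wide} furnishes a set $S$ of $s$ such vertices and a set $Z\subseteq V(G)$ with $|Z|\le 2$ such that the vertices of $S$ are pairwise at distance $\ge d$ in $G-Z$.  Applying Corollary~\ref{cor-weight} to $G-Z$ and $S$ with $\ell:=k+O(1)$, large enough to absorb trimming corrections due to $Z$, produces $F\subseteq G-Z$ containing $S$, with at most $\beta_1 s$ vertices incident to non-$4$-faces and each face either $\ell$-TW-full or such that every $3$-colouring of its boundary extends inside it.

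I then argue that every $\ell$-TW-full face of $F$ must contain at least one patch or one full $5$-face in its patched Thomas-Walls subgraph (in $G$): otherwise the subgraph is some $T'_n$ with $n\ge\ell$ all of whose $5$-faces are full only via vertices of $Z$, so trimming at both ends to remove the $\le 2$ $5$-faces meeting $Z$ yields a clean Thomas-Walls $k$-tube in $G$, contradicting the hypothesis.  Following the template of Lemma~\ref{lemma-full5}, I construct four $3$-colourings $\varphi_1,\dots,\varphi_4$ of $G$: the four Thomas-Walls colourings of Lemma~\ref{lemma-coltw} across each TW-full face, augmented by Lemma~\ref{patches} and Lemma~\ref{cyclelemma} to give a vertex of each patch or full $5$-face an extra colour in some $\varphi_i$, and extended across the remaining faces of $F$ using their extension-friendliness.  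Simultaneously, for each $v\in S$ not lying on the boundary of a TW-full face of $F$ (at most $O(\beta_1 s)$ members of $S$ are thereby excluded), I use the pairwise distance $\ge d$ to take a small disk around $v$ disjoint from everything else relevant and apply Lemma~\ref{lemma-mono} inside the disk, with the restriction of $\varphi_i$ to the boundary of the disk extended via the Aksionov-type theorems (Theorems~\ref{thm-extend} and \ref{thm-extend6}), to upgrade $v$'s colour to a size-$2$ set in one of the $\varphi_i$.  Encoding $\varphi_i$ by the disjoint triple $\{3i-2,3i-1,3i\}\subseteq[12]$ gives a colouring of $G$ of total weight at least $4|G|+|S|\ge 4|G|+4a$, whence Lemma~\ref{lemma-indset} yields $\alpha(G)\ge(|G|+a)/3$.

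The hard part is the compatibility between the four base colourings (rigidly constrained on the TW-full faces by Lemma~\ref{lemma-coltw}) and the $|S|$ local modifications around the $S$-vertices.  This is secured by choosing $d$ much larger than the diameter of any modification disk, so that the disks are pairwise disjoint and disjoint from every TW-full face of $F$, and by using the extension theorems (in particular Theorem~\ref{thm-extend} and Theorem~\ref{thm-extend6}) to graft the modified colouring inside each disk with the base colouring outside, turning the global construction into $\Omega(s)$ independent local problems each handled by Lemma~\ref{lemma-mono}.
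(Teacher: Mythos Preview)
Your high-level plan is sound, but the execution has two genuine gaps that the paper's proof resolves by organising the argument differently.

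First, the ``local disk modification'' step does not work as described. Lemma~\ref{lemma-mono} produces a colouring of the \emph{entire} graph in which one vertex receives two colours; it gives no control over what happens on the boundary of a disk you choose around $v$. To upgrade $v$ to a size-two set while matching a pre-existing colouring $\varphi_i$ outside some disk you would need a precolouring-extension result of the form ``given a $3$-colouring of a cycle $K$, extend it inside so that a prescribed interior vertex of degree $\le 4$ receives two colours'', and no such result is available for general cycles. The Aksionov-type theorems you cite extend a \emph{single} colouring; they do not manufacture the extra colour. This is why the paper does \emph{not} attempt to graft local modifications onto global base colourings.

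Second, taking $\ell=k+O(1)$ and trying to harvest gain from \emph{both} the TW-full faces and the $S$-vertices simultaneously does not balance. Each $\ell$-TW-full face yields only a constant gain (one patch or full $5$-face), while its $8$ boundary vertices are incident with a non-$4$-face of $F$; you have no bound preventing \emph{all} of $S$ from sitting in components of $F$ containing such boundaries, so your exclusion ``at most $O(\beta_1 s)$ members'' is vacuous since $\beta_1\gg 1$. The paper avoids this entirely by a case split: it takes $\ell=3(4a+9)k$, so that a \emph{single} $\ell$-TW-full face already forces $p+q\ge 4a+8$ and Lemma~\ref{lemma-full5} alone finishes. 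When no face is $\ell$-TW-full, every face of $F$ is extension-friendly; then either $|F|$ is large and Lemma~\ref{lemma-sizeind} applies, or $|F|<d$ and the $S_0$-vertices lie in \emph{distinct components} of $F$, so Lemma~\ref{lemma-mono} can be applied to each component independently and the resulting colouring of $F$ extends face-by-face to $G-Z$. The paper also puts the \emph{neighbours} of $S_0$ into $S$, ensuring that in this extension each $v\in S_0$ has all its $G-Z$-neighbours already inside $F$, so the two-colour assignment at $v$ survives. Both the case split and the componentwise application of Lemma~\ref{lemma-mono} are essential, and your unified approach does not recover them.
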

\begin{proof}
Let $\ell=3(4a+9)k$ and $t=\lceil(5a+10)\beta_1\rceil$, where $\beta_1$ is the constant of Corollary~\ref{cor-weight}.
Let $n_0$ be the constant of Theorem~\ref{thm-wide} applied with $s=a+2$ and $d=96t(3t+a+2)+2$.
Let $f(k,a)=5n_0$.

Let $G$ be a plane triangle-free graph with at least $5n_0$ vertices.  By Observation~\ref{obs-deg4}, $G$
has at least $n_0$ vertices of degree at most $4$. By Theorem~\ref{thm-wide}, there exists $Z\subseteq V(G)$
of size at most two and a set $S_0$ of $a+2$ vertices of degree at most $4$ such that the distance between any
pair of vertices of $S_0$ in $G-Z$ is at least $d$.  Note that the graph $G-Z$ does not contain
a clean Thomas-Walls $3k$-tube, since vertices of $Z$ may make only two faces of such a tube full in $G$
and $G$ does not contain a clean Thomas-Walls $k$-tube.

Let $S$ be the set consisting of $S_0$ and all the neighbors of vertices of $S_0$ in $G-Z$; we have $|S|\le 5|S_0|=5a+10$.
Let $F$ be the subgraph obtained by Corollary~\ref{cor-weight} applied to $G-Z$ and $S$,
such that each face $f$ of $F$ is either $\ell$-TW-full or
each $3$-colouring of the boundary of $f$ extends to a $3$-colouring of the subgraph of $G$ drawn in the closure of $f$,
and such that $F$ contains at most $t$ vertices with not all incident faces of $F$ being $4$-faces.

Suppose first that
$F$ has an $\ell$-TW-full face, and let $H$ be the corresponding patched Thomas-Walls subgraph.
Let $p$ be the number of patches and $q$ the number of full $5$-faces of $H$.
Since $G-Z$ does not contain a clean Thomas-Walls $3k$-tube, we have $3(p+q+1)k\ge \ell=3(4a+9)k$, and thus
$p+q\ge 4a+8$.  Therefore,
$$\alpha(G)\ge\alpha(G-Z)\ge \frac{|G-Z|+(4a+8)/4}{3}=\frac{|G-Z|+a+2}{3}\ge\frac{|G|+a}{3}$$
by Lemma~\ref{lemma-full5}.

Hence, we can assume that no face of $F$ is $\ell$-TW-full.  Note that at most $t$ vertices of $F$ are incident
with non-$4$-faces, and thus if $|F|\ge 96t(3t+a+2)=d-2$, then Lemma~\ref{lemma-sizeind} implies
$\alpha(G)\ge\alpha(G-Z)\ge\frac{|G-Z|+a+2}{3}\ge\frac{|G|+a}{3}$.  Hence, we can assume
$|F|\le d-3$, and since the distance between any two vertices of $S_0$ is at least $d$, if $u$ and $v$ are
neighbours of distinct vertices of $S_0$, then they do not belong to the same component of $F$.

By Lemma~\ref{lemma-mono} applied to each component of $F$ separately, $F$ has a colouring by subsets of $[3]$
such that vertices of $S_0$ are assigned two colours and all other vertices are assigned one colour.
Since $F$ has no TW-full faces and the neighborhood of vertices of $S_0$ in $G-Z$ is contained in $S$,
this colouring extends to a colouring of $G-Z$ with the same property,
and by Lemma~\ref{lemma-indset}, we have
$\alpha(G)\ge\alpha(G-Z)\ge\frac{|G-Z|+a+2}{3}\ge\frac{|G|+a}{3}$
as required.
\end{proof}

The main result is now proved by combining Lemma~\ref{lemma-large-fst} with Theorem~\ref{thm-largenosep}.

\begin{proof}[Proof of Theorem~\ref{thm-main}]
Let $f$ be the function of Lemma~\ref{lemma-large-fst} and let $\gamma$ be the constant of Theorem~\ref{thm-largenosep}.
Let $\gamma_k=\frac{\gamma}{12(f(k,14)+1)}$.

We prove the claim by induction on the number of vertices of $G$.  The claim follows from Corollary~\ref{cor-add1} if $|G|\le 1/\gamma_k$,
and thus assume that $|G|>1/\gamma_k>f(k,14)$.  Let us say that a non-facial $4$-cycle $C$ in $G$ is \emph{substantial} if at
least $f(k,14)$ vertices of $G$ are drawn in the closed disk bounded by $C$.  If $G$ has a substantial $4$-cycle,
then let $C$ be such a $4$-cycle such that the open disk bounded by $C$ is inclusion-wise minimal, let $G_1$ be the subgraph of $G$
drawn in the closed disk bounded by $C$, and let $G_2$ be the subgraph of $G$ drawn in the complement of the open disk bounded by $C$.
If no $4$-cycle in $G$ is substantial, then let $G_1=G$ and let $G_2$ be the null graph.  Note that neither $G_1$ nor $G_2$
contains a clean Thomas-Walls $k$-tube.

Let $\FF$ be the family of non-facial $4$-cycles in $G_1$ such that the open disks bounded by them are inclusion-wise maximal;
note that the disks are pairwise disjoint.  For $K\in\FF$, let $G_K$ denote the subgraph of $G_1$ drawn in the closed disk bounded
by $K$.  By the choice of $G_1$, the graph $G_K$ has less than $f(k,14)$ vertices.  Let $G'_1$ be the subgraph of $G_1$ obtained
by removing the vertices and edges drawn in the open disks bounded by the cycles of $\FF$.  Note that by the choice of $\FF$,
the graph $G'_1$ has no non-facial $4$-cycles.  Furthermore, $|E(G'_1)|\ge |E(\bigcup\FF)|\ge 2|\FF|$, and since $G'_1$ is a simple triangle-free
planar graph, we have $|G'_1|\ge |\FF|$.  Note also that $|G'_1|\ge |G_1|-f(k,14)|\FF|$.  We conclude that
$|G'_1|\ge |G_1|/(f(k,14)+1)$.

By Lemma~\ref{lemma-large-fst} and Theorem~\ref{thm-largenosep}, we have
\begin{align*}
\alpha(G_1)&\ge \frac{|G_1|+\max(14,\gamma|G'_1|)}{3}\ge \frac{|G_1|+\max(14,\gamma|G_1|/(f(k,14)+1))}{3}\\
&=\frac{|G_1|+\max(14,12\gamma_k|G_1|)}{3}=\frac{|G_1|+\max(2,12(\gamma_k|G_1|-1))}{3}+4.
\end{align*}
Consequently,
$$\alpha(G_1)\ge \frac{(1+\gamma_k)|G_1|}{3}+4.$$
By the induction hypothesis, we have $$\alpha(G_2)\ge \frac{(1+\gamma_k)|G_2|}{3}.$$
Removing from the maximum independent sets of $G_1$ and $G_2$ the vertices contained in $C$ (there are at most two such vertices
in each independent set), we obtain
$$\alpha(G)\ge\alpha(G_1)+\alpha(G_2)-4\ge\frac{(1+\gamma_k)|G|}{3}$$
as required.
\end{proof}

\bibliographystyle{acm}
\bibliography{indepkernel}

\begin{thebibliography}{10}

\bibitem{aksenov}
{\sc Aksionov, V.~A.}
\newblock On continuation of $3$-colouring of planar graphs.
\newblock {\em Diskret. Anal. Novosibirsk 26\/} (1974), 3--19.
\newblock In Russian.

\bibitem{baker1994approximation}
{\sc Baker, B.}
\newblock Approximation algorithms for {NP}-complete problems on planar graphs.
\newblock {\em Journal of the ACM (JACM) 41}, 1 (1994), 153--180.

\bibitem{4c4t}
{\sc Borodin, O.~V., Dvo\v{r}\'ak, Z., Kostochka, A., Lidick\'y, B., and
  Yancey, M.}
\newblock Planar 4-critical graphs with four triangles.
\newblock {\em ArXiv e-prints 1306.1477\/} (June 2013).

\bibitem{cygan2015parameterized}
{\sc Cygan, M., Fomin, F.~V., Kowalik, {\L}., Lokshtanov, D., Marx, D.,
  Pilipczuk, M., Pilipczuk, M., and Saurabh, S.}
\newblock {\em Parameterized Algorithms}, vol.~4.
\newblock Springer, 2015.

\bibitem{downey2012parameterized}
{\sc Downey, R.~G., and Fellows, M.~R.}
\newblock {\em Parameterized complexity}.
\newblock Springer Science \& Business Media, 2012.

\bibitem{indsettight}
{\sc Dvo{\v{r}}{\'a}k, Z., Masa{\v{r}}{\'\i}k, T., Mus{\'\i}lek, J., and
  Pangr{\'a}c, O.}
\newblock Triangle-free planar graphs with smallest independence number.
\newblock Manuscript, 2016.

\bibitem{dmnich}
{\sc Dvo{\v{r}}{\'a}k, Z., and Mnich, M.}
\newblock Large independent sets in triangle-free planar graphs.
\newblock In {\em Algorithms-ESA 2014}. Springer, 2014, pp.~346--357.

\bibitem{trfree6}
{\sc Dvo\v{r}\'ak, Z., Kr\'al', D., and Thomas, R.}
\newblock Three-coloring triangle-free graphs on surfaces {VI}.
  $3$-colorability of quadrangulations.
\newblock {\em ArXiv e-prints 1509.01013\/} (Sept. 2015).

\bibitem{trfree1}
{\sc Dvo\v{r}\'ak, Z., Kr\'al', D., and Thomas, R.}
\newblock Three-coloring triangle-free graphs on surfaces {I}. {E}xtending a
  coloring to a disk with one triangle.
\newblock {\em J. Comb. Theory, Ser. {B} 120\/} (2016), 1--17.

\bibitem{col8cyc}
{\sc Dvo\v{r}\'ak, Z., and Lidick\'y, B.}
\newblock 3-coloring triangle-free planar graphs with a precolored 8-cycle.
\newblock {\em J. Graph Theory\/} (2014).
\newblock doi:10.1002/jgt.21842.

\bibitem{cylgen-part2}
{\sc Dvo\v{r}\'ak, Z., and Lidick\'y, B.}
\newblock Fine structure of $4$-critical triangle-free graphs {II}. {P}lanar
  triangle-free graphs with two precolored $4$-cycles.
\newblock {\em ArXiv e-prints 1505.07296\/} (May 2015).

\bibitem{frpltr}
{\sc Dvo\v{r}\'ak, Z., Sereni, J.-S., and Volec, J.}
\newblock Fractional coloring of triangle-free planar graphs.
\newblock {\em Electronic Journal of Combinatorics 22\/} (2015), P4.11.

\bibitem{fabrici2007structure}
{\sc Fabrici, I., and Madaras, T.}
\newblock The structure of 1-planar graphs.
\newblock {\em Discrete Mathematics 307}, 7 (2007), 854--865.

\bibitem{gimbel}
{\sc Gimbel, J., and Thomassen, C.}
\newblock Coloring graphs with fixed genus and girth.
\newblock {\em Trans. Amer. Math. Soc. 349\/} (1997), 4555--4564.

\bibitem{grotzsch1959}
{\sc Gr{\"o}tzsch, H.}
\newblock Ein {D}reifarbensatz f\"{u}r {D}reikreisfreie {N}etze auf der
  {K}ugel.
\newblock {\em Math.-Natur. Reihe 8\/} (1959), 109--120.

\bibitem{npom-nd1}
{\sc Ne{\v{s}}et\v{r}il, J., and {Ossona de Mendez}, P.}
\newblock First order properties on nowhere dense structures.
\newblock {\em J. Symbolic Logic 75\/} (2010), 868--887.

\bibitem{Niedermeier2006}
{\sc Niedermeier, R.}
\newblock {\em Invitation to fixed-parameter algorithms}, vol.~31 of {\em
  Oxford Lecture Series in Mathematics and its Applications}.
\newblock Oxford University Press, Oxford, 2006.

\bibitem{quickly}
{\sc Robertson, N., Seymour, P.~D., and Thomas, R.}
\newblock Quickly excluding a planar graph.
\newblock {\em J. Combin. Theory, Ser.~B 62}, 2 (1994), 323--348.

\bibitem{SteinbergTovey1993}
{\sc Steinberg, R., and Tovey, C.~A.}
\newblock Planar {R}amsey numbers.
\newblock {\em J. Combin. Theory, Ser.~B 59}, 2 (1993), 288--296.

\bibitem{tw-klein}
{\sc Thomas, R., and Walls, B.}
\newblock Three-coloring {K}lein bottle graphs of girth five.
\newblock {\em J. Combin. Theory, Ser.~B 92\/} (2004), 115--135.

\end{thebibliography}

\end{document}